\numberwithin{equation}{section}
\newtheorem{Theorem}{Theorem}[section]
\newtheorem{Corollary}[Theorem]{Corollary}
\newtheorem{Lemma}[Theorem]{Lemma}
\newtheorem{Proposition}[Theorem]{Proposition}
 { \theoremstyle{definition}

\newtheorem{Example}[Theorem]{Example}
\newtheorem{Remark}[Theorem]{Remark} }
\newcommand{\bfR}{\mathbf{R}}
\newcommand{\bfH}{{\mathbf H}}
\newcommand{\bfK}{{\mathbf K}}
\newcommand{\bfkappa}{{\bm \varkappa}}
\newcommand{\erfc}{\operatorname{erfc}}
\newcommand{\erf}{\operatorname{erf}}
\newcommand{\I}{{\mathrm{I}}}
\newcommand{\II}{{\mathrm{II}}}
\newcommand{\III}{{\mathrm{III}}}
\newcommand{\R}{{\mathbb R}}
\newcommand{\C}{{\mathbb C}}
\newcommand{\E}{{\mathbf E}}
\newcommand{\eps}{{\varepsilon}}
\newcommand{\re}{\operatorname{Re}}
\newcommand{\im}{\operatorname{Im}}
\newcommand{\Ham}{\mathbf{H}}
\newcommand{\Prob}{{\mathbf{P}}}
\renewcommand{\d}{{\partial}}
\newcommand{\Pf}{{\operatorname{Pf}}}
\newcommand{\sgn}{\operatorname{sgn}}
\def\bp{{\bar\partial}}
\def\wh{\widehat}
\newcommand{\RN}[1]{%
	\textup{\uppercase\expandafter{\romannumeral#1}}%
}
\begin{document}

\newcommand{\arXivNumber}{2106.09345}

\renewcommand{\PaperNumber}{007}

\FirstPageHeading

\ShortArticleName{Scaling Limits of Planar Symplectic Ensembles}

\ArticleName{Scaling Limits of Planar Symplectic Ensembles}

\Author{Gernot AKEMANN~$^{\rm a}$, Sung-Soo BYUN~$^{\rm b}$ and Nam-Gyu KANG~$^{\rm b}$}

\AuthorNameForHeading{G.~Akemann, S.-S.~Byun and N.-G.~Kang}

\Address{$^{\rm a)}$~Faculty of Physics, Bielefeld University, P.O. Box 100131, 33501 Bielefeld, Germany}
\EmailD{\href{mailto:akemann@physik.uni-bielefeld.de}{akemann@physik.uni-bielefeld.de}}

\Address{$^{\rm b)}$~School of Mathematics, Korea Institute for Advanced Study, Seoul, 02455, Republic of Korea}
\EmailD{\href{mailto:sungsoobyun@kias.re.kr}{sungsoobyun@kias.re.kr}, \href{mailto:namgyu@kias.re.kr}{namgyu@kias.re.kr}}

\ArticleDates{Received June 23, 2021, in final form January 19, 2022; Published online January 25, 2022}

\Abstract{We consider various asymptotic scaling limits $N\to\infty$ for the $2N$ complex eigen\-values of non-Hermitian random matrices in the symmetry class of the symplectic Ginibre ensemble. These are known to be integrable, forming Pfaffian point processes, and we obtain limiting expressions for the corresponding kernel for different potentials. The first part is devoted to the symplectic Ginibre ensemble with the Gaussian potential. We obtain the asymptotic at the edge of the spectrum in the vicinity of the real line. The unifying form of the kernel allows us to make contact with the bulk scaling along the real line and with the edge scaling away from the real line, where we recover the known determinantal process of the complex Ginibre ensemble.	Part two covers ensembles of Mittag-Leffler type with a~singularity at the origin. For potentials $Q(\zeta)=|\zeta|^{2\lambda}-(2c/N)\log|\zeta|$, with $\lambda>0$ and $c>-1$, the limiting kernel obeys a linear differential equation of fractional order $1/\lambda$ at the origin. For integer $m=1/\lambda$ it can be solved in terms of Mittag-Leffler functions. In the last part, we derive Ward's equation for planar symplectic ensembles for a general class of potentials. It serves as a tool to investigate the Gaussian and singular Mittag-Leffler universality class. This allows us to determine the functional form of all possible limiting kernels (if they exist) that are translation invariant, up to their integration domain.}

\Keywords{symplectic random matrix ensemble; Pfaffian point process; Mittag-Leffler functions; Ward's equation; translation invariant kernel}

\Classification{60B20; 33C45; 33E12}

\section{Introduction}

In the pioneering work of Ginibre \cite{ginibre1965statistical}, it was first discovered that the complex eigenvalues of Gaussian random matrices with quaternion elements (also known as the symplectic Ginibre ensemble) behave like equally charged particles with complex conjugation symmetry. They interact via the two-dimensional Coulomb repulsion, subject to a confining
Gaussian potential $Q(\zeta)=|\zeta|^2$.
Below we will consider more general potentials $Q$, where the joint probability distribution $\Prob_N$ of the point process $\boldsymbol{\zeta}=(\zeta_1,\dots,\zeta_N) \in \C^N$ is given by
\begin{gather}\label{Gibbs}
	{\rm d} \Prob _N(\boldsymbol{\zeta}):= \frac{1}{Z_N} {\rm e}^{-\bfH_N( \boldsymbol{\zeta} )} \prod_{j=1}^N {\rm d}A(\zeta_j).
\end{gather}
Here the Hamiltonian $\bfH_N$ is given by
\begin{gather} \label{Ham}
	\bfH_N( \boldsymbol{\zeta} ):=\sum_{j\neq k} \log \frac{1}{|\zeta_j-\zeta_k| \big|\zeta_j-\overline{\zeta}_k\big| }+\sum_{j=1}^{N} \left( \log \frac{1}{\big| \zeta_j- \overline{\zeta}_j \big|^2}+ NQ(\zeta_j) \right),
\end{gather}
the normalisation constant $Z_N$ is called the partition function, which turns $\Prob _N$ into a probability measure, and ${\rm d}A(\zeta):=\tfrac{1}{\pi}{\rm d}^2\zeta$ is the two-dimensional Lebesgue measure divided by~$\pi$.

Compared to the eigenvalues of the complex Ginibre or more general random normal matrix ensembles, which correspond to a genuine two-dimensional Coulomb gas at specific inverse temperature $\beta=2$ without further symmetries, one of the most distinguished features of the symplectic ensemble is the existence of a local repulsion from the real axis, which follows from the term $-\log \big| \zeta_j- \overline{\zeta}_j \big|^2$ in~\eqref{Ham}.
To be more precise, this local repulsion originates from complex conjugate eigenvalue pairs which repel each other when approaching the real axis.
For illustration, see Figure~\ref{Fig. QGinibre}(a) for some random samplings of eigenvalues from the symplectic Ginibre ensemble.

\begin{figure}[h!]
	\begin{subfigure}{0.39\textwidth}
		\begin{center}	
			\includegraphics[width=2in]{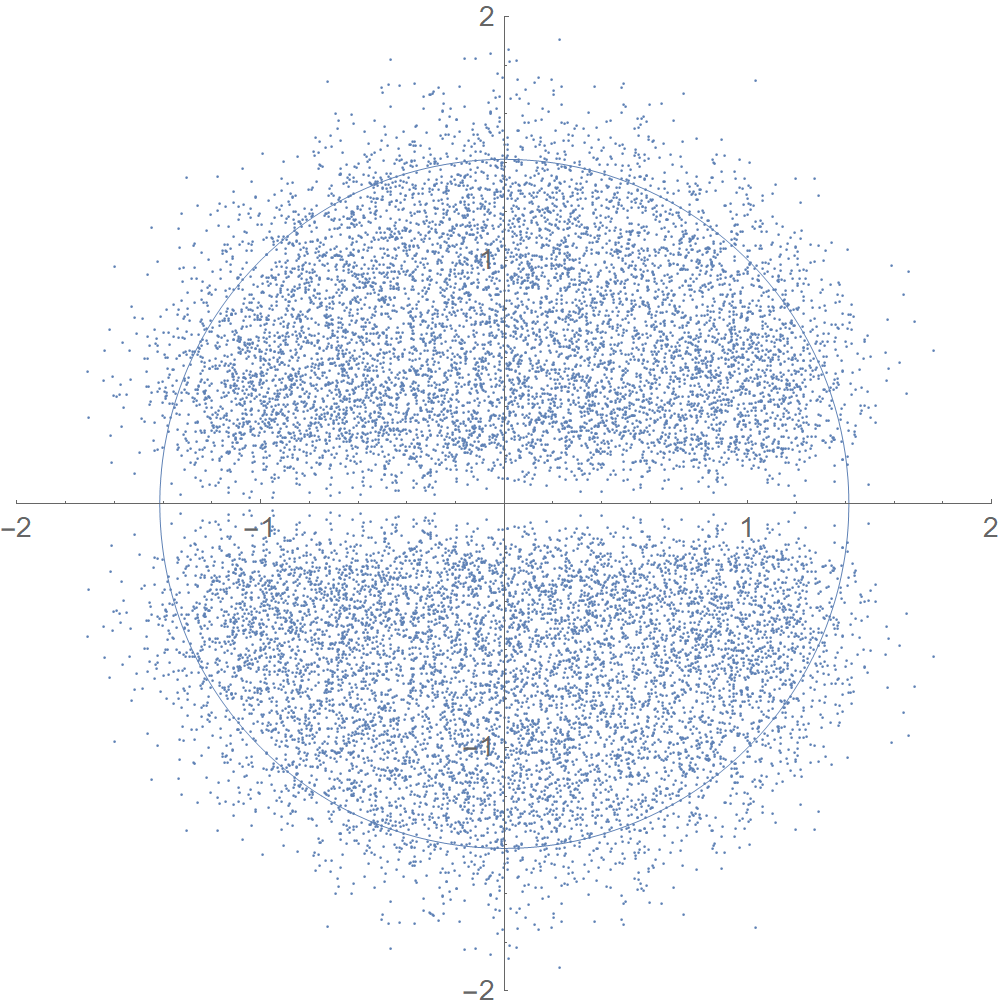}
		\end{center}
		\subcaption{$1024$ samples for $N=8$}
	\end{subfigure}	
	\begin{subfigure}[h]{0.59\textwidth}
		\begin{center}
			\includegraphics[width=2.792in]{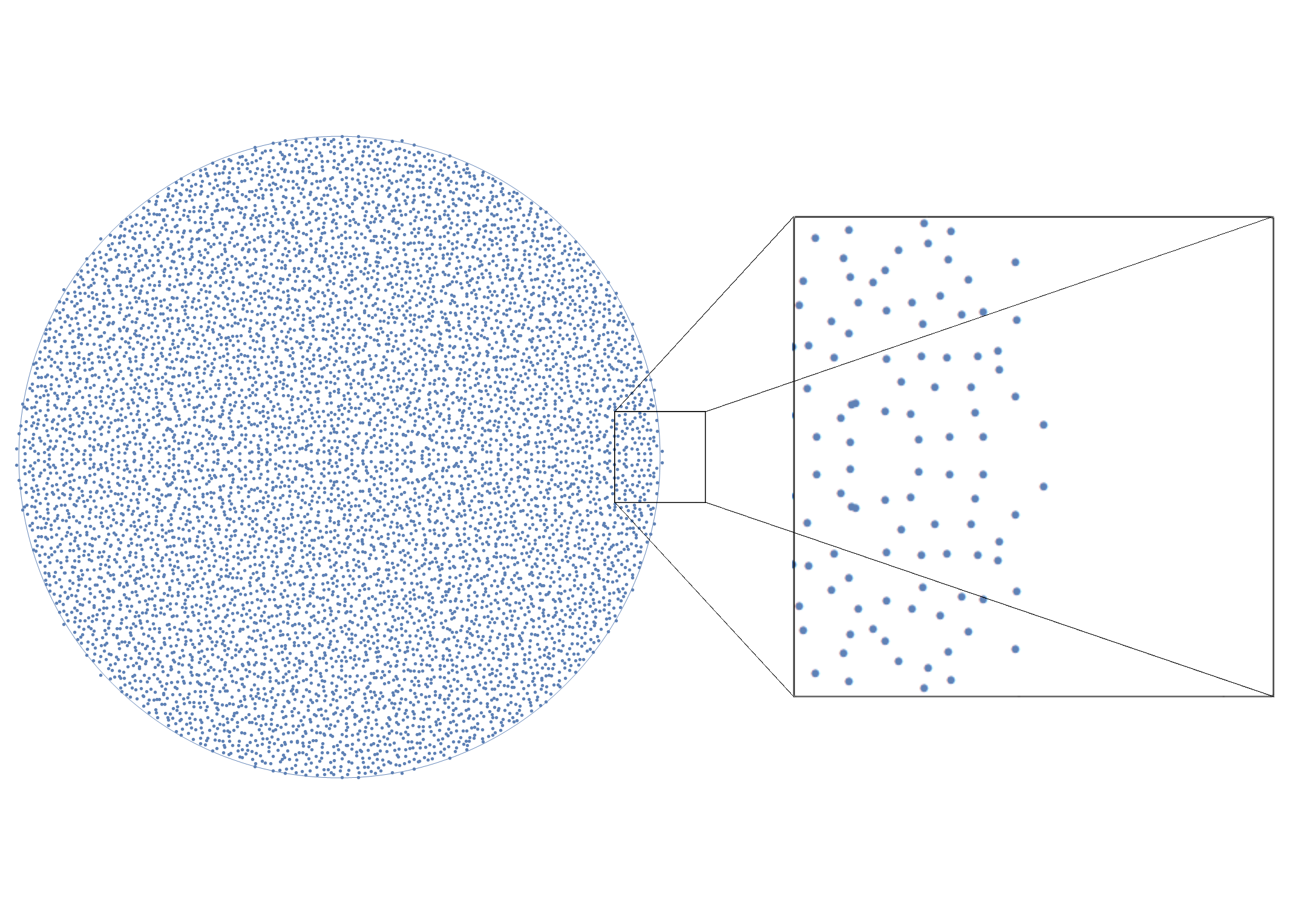}
		\end{center}
		\subcaption{a single sample for $N=8192$}
	\end{subfigure}
	\caption{The plots show $2N$ eigenvalues of random matrices from the symplectic Ginibre ensemble.
		In (a), the local repulsion from the real axis is clearly visible. In (b), the rescaled process at the right endpoint of the spectrum clearly displays the complex conjugation symmetry.} \label{Fig. QGinibre}
\end{figure}

Why are symplectic ensembles interesting, apart from their statistical mechanics interpretation?
Together with its real and complex counterparts, the symplectic Ginibre ensemble represents one of the few examples which is integrable, in the sense that it constitutes a Pfaffian point process where the matrix-valued kernel can be explicitly constructed.
This fact will be recalled in more detail in the next section.
It makes the asymptotic analysis of the kernel in various scaling limits possible, as will be the main topic of this work. The situation is much more difficult for Coulomb gases at general values of $\beta\neq2$, see \cite{Ameur18low,Abanov,Serfaty} and references therein for recent developments.

\looseness=1 Moreover, it has been found rather recently that non-Hermitian random matrices enjoy a~much wider class of universality than their Hermitian counterparts, in the sense that away from the real axis the limiting complex eigenvalue correlation functions of all three Ginibre ensembles agree
\cite{akemann2019universal,MR2530159,Dubach2020,MR1986426}. The same phenomenon has been observed very recently when comparing the spectra of truncated unitary and symplectic random matrices \cite{BL,ZS}, respectively.

While random matrices with complex eigenvalues have many applications in physics in general, we provide some examples
where the symplectic ensemble yields unique predictions and differs from other non-Hermitian symmetry classes, notably at the origin. These include disordered non-Hermitian Hamiltonians with an imaginary magnetic field~\cite{EK},
thermal conduction in superconducting quantum dots~\cite{DBB} in the related circular quaternion ensemble,
or the spectrum of the Dirac operator in quantum chromodynamics with two colours at non-vanishing chemical
potential, for the symplectic ensemble with additional chiral symmetry \cite{MR2180006}.
In the latter the complex eigenvalues of the Dirac operator in the vicinity of the origin are of particular importance because of their role in chiral symmetry breaking. The limiting random matrix predictions have been confirmed from field theory in~\cite{ABi06}.

It is the goal of this article to derive further asymptotic results which are specific for the symplectic ensembles, notably at the edge of the real axis and in the presence of singularities at the origin. The representation for the limiting
edge kernel that we will derive provides a unifying picture that allows us to relate results at different parts of the spectrum. Before we present our main results in the next section, let us summarise what was previously known about symplectic ensembles in various parts of the spectrum. Here we include the elliptic symplectic Ginibre ensemble with the potential $Q(\zeta)=
\tfrac{1}{1-\tau^2} \big( |\zeta|^2-\tau \re \zeta^2 \big)$, where the parameter $\tau\in[0,1)$ controls the degree of non-Hermiticity. Its joint probability distribution \eqref{Gibbs} and kernel at finite-$N$ were derived by Kanzieper~\cite{MR1928853}.

In the asymptotic analysis of the kernel at various points of the spectrum one has to distinguish local or microscopic from global or macroscopic scales.
Since the above-mentioned repulsion from the real line affects only the microscopic scale when the complex conjugate eigenvalues become close, it could be expected that the leading form of the macroscopic eigenvalue density is the same as that of the two-dimensional Coulomb gas in the symmetry class of the complex Ginibre ensemble, with external potential~$Q/2$,
see, e.g.,~\cite{MR3458536}. Indeed, for a general $Q$ satisfying complex conjugation symmetry $Q(\zeta)=Q\big(\bar{\zeta}\big)$ and suitable potential theoretic assumptions, it was shown by Benaych-Georges and Chapon that as $N \to \infty$ the empirical measure of~$\boldsymbol{\zeta}$ converges to Frostman's equilibrium measure associated with $Q/2$, see \cite[Theorem~3.1]{MR2934715}.
In particular, the density of $\boldsymbol{\zeta}$ tends to
\begin{gather} \label{density}
	\tfrac12 \Delta Q(\zeta) \cdot \mathbbm{1}_S(\zeta), \qquad \Delta:=\partial \bar{\partial},
\end{gather}
where $S$ is a certain compact set called the \emph{droplet}. For the Ginibre ensemble this is the well-known circular (or elliptic) law.

In the local scaling limit at the origin at maximal non-Hermiticity ($\tau=0$), the limiting kernel of the symplectic Ginibre was derived in \cite{MR1928853,Mehta}. At weak non-Hermiticity, when $\tau$ scales as $1-\tau=O\big(N^{-1}\big)$, a different limiting kernel was found at the origin by Kanzieper \cite{MR1928853}. It interpolates between the former
at $\tau=0$ and the sine kernel of the Gaussian symplectic ensemble in the limit $\tau\to1$. Both limiting kernels are invariant under translations along the real line, and we will come back to this feature below.

At the edge of the spectrum, it was shown by Rider \cite{MR1986426} and spelled out by Dubach \cite{Dubach2020} that the maximal modulus fluctuations of complex and symplectic Ginibre ensemble agree.
The agreement between the two ensembles was also studied in \cite{akemann2019universal}. It was shown that in the bulk away from the real axis both ensembles yield the same determinantal point processes. In this work we will first focus on the edge on the real line. The local statistics along the real line for the elliptic symplectic Ginibre ensemble is found in \cite{BE}.

\looseness=1 Secondly, we investigate what happens when a non-Gaussian potential develops a singularity by inserting a point charge at the origin. In the generic case of a potential of Mittag-Leffler type we will be able to provide an explicit expression for the limiting origin kernel,
that differs from the Ginibre universality class.
Our findings can be thought of as the counterparts for previous results in random normal matrix ensembles \cite{ameur2018random,MR1643533}.
(See also \cite{MR3668632,MR3849128,MR3670735} for extensive studies on the orthogonal polynomials associated with Mittag-Leffler type potentials.)

As the third issue, we study the universality for kernels that are translation-invariant along the real line. In setting up Ward's equation for the symplectic ensemble -- an identity satisfied by limits of the rescaled one-point functions, under some natural assumptions, we can completely characterise the class of all such possible limiting kernels for general potentials by an integral representation. It is unique (if it exists) up to the integration domain, which is a connected interval symmetric around the origin, see \cite{MR3975882,MR4030288} for analogous works in random normal matrix ensembles.

\section{Main results}\label{sec:main}
Let us now come to the main objects in this work. We denote by $\mathbb{D}(\eta,r)$ the disc with centre $\eta \in \C$ and radius $r$. For a given sequence of points $p_N$, the positive number $r_N=r_N(p_N)$ is called the \textit{micro-scale} if it satisfies
\begin{gather} \label{micro-scale}
	\int_{ \mathbb{D}(p_N,r_N) } \frac{ \Delta Q(\zeta) }{2} \,{\rm d}A(\zeta)= \frac{1}{N}.
\end{gather}
We drop the subscript and write $p \equiv p_N$ if the sequence does not depend on $N$.
By \eqref{density}, the micro-scale $r_N$ corresponds to the mean eigenvalue spacing in radial distance of the ensemble~\eqref{Gibbs} at the point $p\in S$
(cf.~\cite{BE} for a situation where $p$ is outside the droplet).
We define the rescaled process $\boldsymbol{z}=\{ z_j \}_{ j=1 }^N$ as follows: for all $j$,
\begin{gather} \label{rescaling}
	z_j:=
	\begin{cases}
		r_N^{-1} \cdot (\zeta_j-p) &\text{if } p \in \operatorname{int}(S) ,
		\\
		{\rm e}^{-{\rm i}\theta} r_N^{-1} \cdot (\zeta_j-p) &\text{if } p \in \partial S ,
	\end{cases}
\end{gather}
distinguishing the interior and the boundary of the droplet $S$.
Here the angle $\theta\equiv \theta_N \in \R$ is chosen so that ${\rm e}^{{\rm i}\theta}$ is outer normal to $\partial S$ at $p$, see Figure~\ref{Fig. QGinibre}(b)
inset
for an illustration of the rescaled process.
The $k$-point correlation function of
the rescaled process $\boldsymbol{z}$ is defined by
\begin{gather*}
R_{N,k}(z_1,\dots,z_k)
:= \lim_{\eps \downarrow 0} \frac{\mathbb{P}(\exists \text{ at least one particle in } \mathbb{D}(z_j,\eps),\, j=1,\dots,k ) }{\eps^{2k}}.
\end{gather*}
In \eqref{bfRNk def} a more standard definition of the $k$-point correlation function before rescaling denoted by $\bfR_{N,k}(\zeta_1,\dots,\zeta_k)$ is given.
Throughout the paper we distinguish these and all other objects (kernels, Hamiltonian, joint distribution) before rescaling by bold symbols.
For the precise relation on the level of the kernels see~\eqref{kappa_N}. It results into the following relation between the $k$-point correlation functions
\begin{gather*}
R_{N,k}(z_1,\dots, z_k)=r_N^{2k} \bfR_{N,k}(\zeta_1,\dots,\zeta_k).
\end{gather*}
It is well known that before and after rescaling the
set $\boldsymbol{z}$ forms a Pfaffian point process (see \cite{MR1928853,Mehta}), i.e., $R_{N,k}$ is expressed in terms of a certain $2\times 2$ matrix-valued kernel $K_N$ as
\begin{gather} \label{RNk Pf}
	R_{N,k}(z_1,\dots, z_k)=\prod_{j=1}^{k}
(\bar{z}_j-z_j) \Pf \big[ K_{N}(z_j,z_l) \big]_{ j,l=1 }^k ,
\end{gather}
where the kernel $K_N$ is of the form
\begin{gather*}
	K_N(z,w):={\rm e}^{ -\frac{N}{2} (Q( \zeta )+Q( \eta )) } \begin{pmatrix}
		\kappa_N(z,w) & \kappa_N(z,\bar{w}) \\
		\kappa_N(\bar{z},w) & \kappa_N(\bar{z},\bar{w})
	\end{pmatrix}.
\end{gather*}
Here $\Pf$ denotes a Pfaffian of the $2k \times 2k$ skew-symmetric matrix and
\begin{gather} \label{zeta eta}
\zeta=\begin{cases}
	p+r_N z &\text{if }p \in \textup{int}(S),
	\\
	p+{\rm e}^{{\rm i}\theta} r_N z &\text{if }p \in \partial S,
\end{cases} \qquad
 \eta=\begin{cases}
	 p+r_N w &\text{if }p \in \textup{int}(S),
	\\
	 p+{\rm e}^{{\rm i}\theta} r_N w &\text{if }p \in \partial S,
\end{cases}
\end{gather}
where $\theta$ is given as in \eqref{rescaling}.
The arguments \eqref{zeta eta} are given according to the rescaling \eqref{rescaling}.
For the spectral density at $k=1$, let us denote $R_N \equiv R_{N,1}$.

The primary goal of this work is to derive the large-$N$ limit of the kernel $K_N$ for various potentials $Q$.
For the Gaussian potential $Q(\zeta)=|\zeta|^2$, where the associated ensemble~\eqref{Gibbs} corresponds to the symplectic Ginibre ensemble, it follows from the circular law that $S=\mathbb{D}\big(0,\sqrt{2}\big)$.
In \cite{MR1928853}, Kanzieper studied the elliptic potential $Q(\zeta)=\frac{1}{1-\tau^2}\big(|\zeta|^2-\re\zeta^2\big)$ and derived the scaling limit
for the pre-kernel at the origin $p=0$ in the almost-Hermitian regime when $1-\tau=O\big(\frac{1}{N}\big)$.
Also at $p=0$ and for maximally non-Hermiticity at $\tau=0$,
he showed that the associated $\infty$-point process $\{z_j\}_{j=1}^\infty$
of the symplectic Ginibre ensemble has the correlation kernel
\begin{gather} \label{K bulk}
	K_{{\rm bulk}}^\R(z,w)= {\rm e}^{-|z|^2-|w|^2}
	\begin{pmatrix}
		\kappa_{{\rm bulk}}^\R(z,w) & \kappa_{{\rm bulk}}^\R(z,\bar{w})
		\vspace{1mm}\\
		\kappa_{{\rm bulk}}^\R(\bar{z},w) & \kappa_{{\rm bulk}}^\R(\bar{z},\bar{w})
	\end{pmatrix},
\end{gather}
where the pre-kernel $\kappa_{{\rm bulk}}^\R$ is given by
\begin{gather} \label{kappa bulk}
	\kappa_{{\rm bulk}}^\R(z,w):= \sqrt{\pi} {\rm e}^{ z^2+w^2 } \erf (z-w).
\end{gather}
We also refer to \cite{Mehta} for an alternative derivation of \eqref{kappa bulk}.
Furthermore, it was shown in \cite{BE,BL2} that the kernel \eqref{K bulk} also appears when $p \in \textup{int}(S) \cap \R$
in the symplectic elliptic Ginibre ensemble
and in that sense is universal
(in \cite[Appendix B]{akemann2019universal} a different strategy at $\tau=0$ was mentioned).

Our first main result Theorem~\ref{Thm_edge kernel} below provides the boundary scaling limit when $p \in \partial S \cap \R=\big\{ {\pm} \sqrt{2} \big\}$, see Figure~\ref{Fig. RN_Gaussian} for the graphs of $R_N$, respectively.

\begin{figure}[h!]
	\begin{subfigure}{0.48\textwidth}
		\begin{center}	
			\includegraphics[width=0.6667\textwidth]{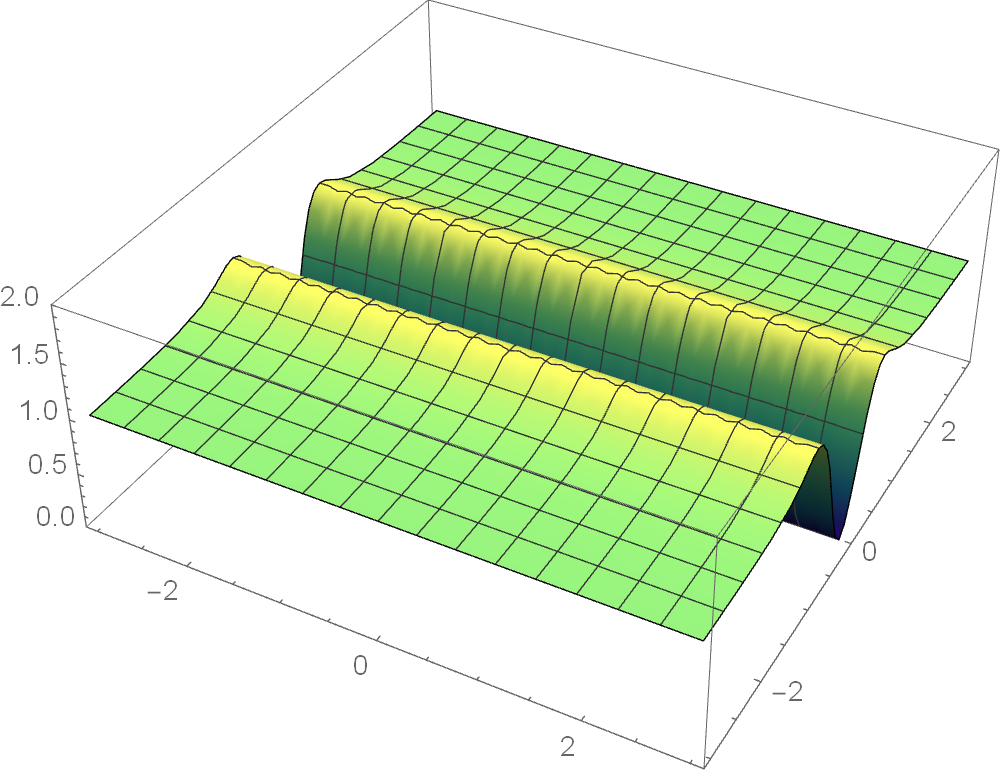}
		\end{center}
		\subcaption{$p=0$}
	\end{subfigure}	
	\begin{subfigure}[h]{0.48\textwidth}
		\begin{center}
			\includegraphics[width=0.6667\textwidth]{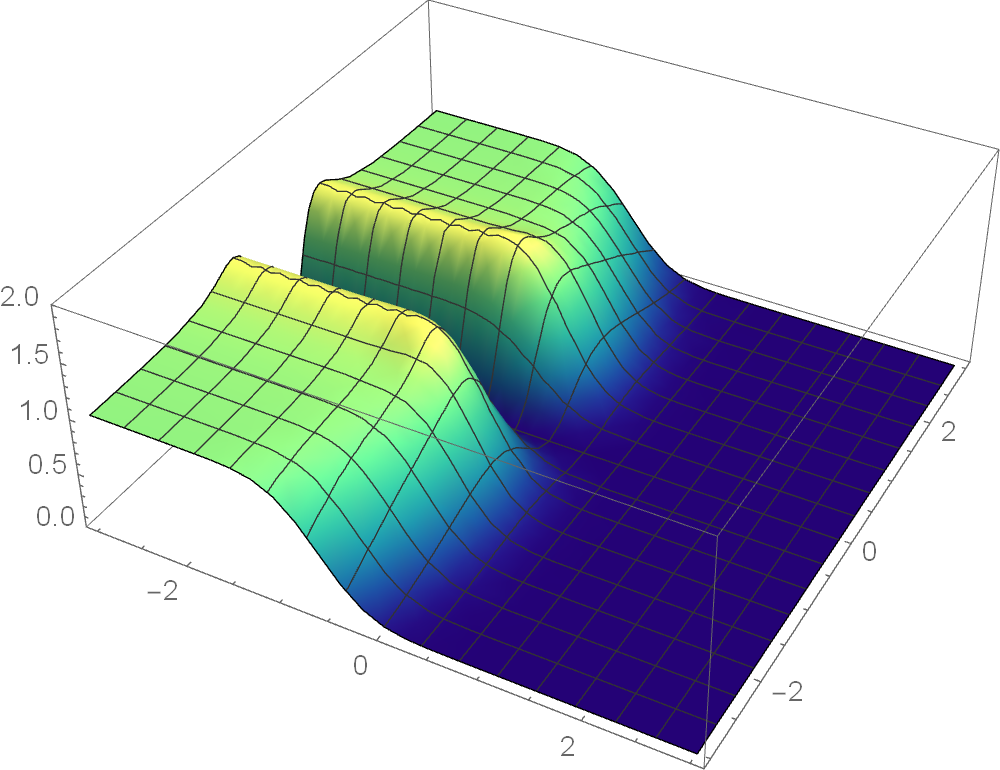}
		\end{center} \subcaption{$p=\sqrt{2}$}
	\end{subfigure}
	
\begin{subfigure}{0.32\textwidth}
		\begin{center}	
			\includegraphics[width=\textwidth]{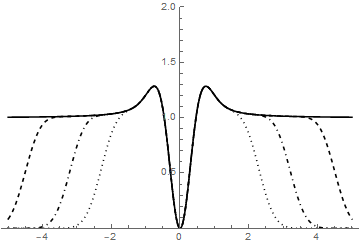}
		\end{center}
		\subcaption{$p=0$, $x=0$}
	\end{subfigure}	
	\begin{subfigure}{0.32\textwidth}
		\begin{center}	
			\includegraphics[width=\textwidth]{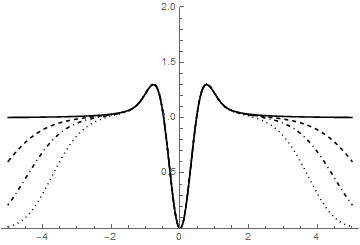}
		\end{center}
		\subcaption{$p=\sqrt{2}$, $x=-1$}
	\end{subfigure}	
	 \begin{subfigure}{0.32\textwidth}
		\begin{center}	
			\includegraphics[width=\textwidth]{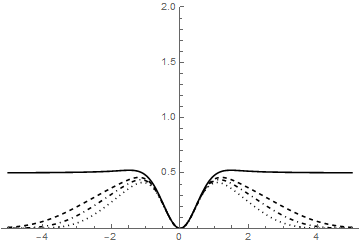}
		\end{center}
		\subcaption{$p=\sqrt{2}$, $x=0$}
	\end{subfigure}	
	\caption{Plots (a) and (b) show the surface graphs of the local densities $R_{N,1}$, where $Q(\zeta)=|\zeta|^2$ and $N=50$.
	Plot (c) displays the graph of the limiting density $R_1(x+{\rm i}y)$ and its comparison with $R_{N,1}(x+{\rm i}y)$ for $p=0$ restricted to $x=0$. Here $N=5$ (dotted line), $N=10$ (dot-dashed line), $N=20$ (dashed line) and $N=\infty$ (full line).
	Plots (d) and (e) are the same figures for $p=\sqrt{2}$ restricted to $x=-1$ and $x=0$ respectively, where $N=50$ (dotted line), $N=100$ (dot-dashed line), $N=200$ (dashed line) and $N=\infty$ (full line).
 From plots (c)--(e) it can be observed that the speed of convergence is faster in the bulk than at the edge.} \label{Fig. RN_Gaussian}
\end{figure}

Let
\begin{gather} \label{f_z(u)}
	f_z(u):=\tfrac12 \erfc\big( \sqrt{2}(z-u) \big)
\end{gather}
and write $W(f,g)$ for the Wronskian of two functions $f$, $g$ to formulate our first main result.

\begin{Theorem} \label{Thm_edge kernel}
	Let $Q(\zeta)=|\zeta|^2$ and $p=\pm \sqrt{2}$.
Then $R_{N,k}(z_1,\dots, z_k)$ converges uniformly for $z_1,\dots, z_k$ in compact subsets of $\C$ to
	\begin{gather*}
		R_k(z_1,\dots, z_k)= \prod_{j=1}^{k} ( \bar{z}_j-z_j ) \Pf \big[ K_{{\rm edge}}^{\R}(z_j,z_l) \big]_{ j,l=1 }^k,
	\end{gather*}
	where
	\begin{gather*} 
		K_{{\rm edge}}^\R(z,w)={\rm e}^{-|z|^2-|w|^2}
		\begin{pmatrix}
			\kappa_{{\rm edge}}^\R (z,w) & \kappa_{{\rm edge}}^\R (z,\bar{w})
			\vspace{1mm}\\
			\kappa_{{\rm edge}}^\R (\bar{z},w) & \kappa_{{\rm edge}}^\R (\bar{z},\bar{w})
		\end{pmatrix},
	\end{gather*}
	and
	\begin{gather} \label{kappa edge}
		\kappa_{{\rm edge}}^\R(z,w)
		:= \sqrt{\pi} {\rm e}^{z^2+w^2} \int_{E} W(f_w,f_z)(u) \, {\rm d}u, \qquad E=(-\infty,0).
	\end{gather}
\end{Theorem}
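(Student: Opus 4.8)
The plan is to start from Kanzieper's finite-$N$ Pfaffian structure for the symplectic Ginibre ensemble and perform the scaling limit \eqref{rescaling} at the real edge point $p=\sqrt{2}$ (the case $p=-\sqrt{2}$ follows by the symmetry $\zeta\mapsto-\zeta$). Recall that for $Q(\zeta)=|\zeta|^2$ the finite-$N$ pre-kernel can be written as a sum over skew-orthogonal polynomials; concretely one has an expression of the form
\begin{gather*}
\kappa_N(\zeta,\eta)=\sum_{k=0}^{N-1}\frac{(\zeta\eta)^{2k+1}-(\zeta\eta)^{?}}{\text{norm}_k}\cdots
\end{gather*}
but the more useful closed form (as in \cite{MR1928853,Mehta,akemann2019universal}) is an integral representation involving incomplete Gamma functions:
\begin{gather*}
\kappa_N(\zeta,\eta)=\sqrt{\pi}\,\frac{1}{2}\,{\rm e}^{\zeta^2+\eta^2}\cdots\big(\text{a finite sum or integral in a single auxiliary variable}\big).
\end{gather*}
First I would fix this exact finite-$N$ formula and record the exact prefactors so that $K_N$ and hence $R_{N,k}$ are unambiguous. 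Here the micro-scale is $r_N=1/\sqrt{2N}$ up to lower-order corrections from \eqref{micro-scale} evaluated at $p=\sqrt 2$ (since $\tfrac12\Delta Q\equiv 1$), and $\theta=0$ because the outer normal to $\partial S=\partial\mathbb D(0,\sqrt 2)$ at $\sqrt 2$ is real; thus $\zeta=\sqrt 2+r_N z=\sqrt 2\,(1+z/(2N)\cdot\sqrt 2\cdot\ldots)$, which I would expand carefully.

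The core of the argument is then a saddle-point / Laplace analysis of the scaled pre-kernel $r_N\,{\rm e}^{-\frac N2(Q(\zeta)+Q(\eta))}\kappa_N(\zeta,\eta)$. Plugging $\zeta=\sqrt2+r_N z$, $\eta=\sqrt2+r_N w$ into the finite-$N$ integral representation, the factor ${\rm e}^{-N|\zeta|^2/2+N\zeta^2/2}$ and its $\eta$-counterpart combine with the summation index to produce, after the substitution $k=N-\text{(order 1)}$ or after rescaling the auxiliary integration variable by $\sqrt N$, an integrand of the form ${\rm e}^{-N\phi(\cdot)}\times(\text{algebraic})$ whose exponent $\phi$ has its relevant critical behaviour governed by the Gaussian-to-erfc transition. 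This is the standard mechanism by which the complementary error function $\erfc$ arises at a soft edge. I expect to obtain, after the dust settles,
\begin{gather*}
\lim_{N\to\infty} r_N\,{\rm e}^{-\frac N2(Q(\zeta)+Q(\eta))}\kappa_N(\zeta,\eta)
=\sqrt{\pi}\,{\rm e}^{z^2+w^2}\,G(z,w),
\end{gather*}
where $G(z,w)$ is the limit of the (suitably rescaled) single auxiliary integral. The task is then to identify $G(z,w)$ with $\int_{-\infty}^{0}W(f_w,f_z)(u)\,{\rm d}u$ where $f_z(u)=\tfrac12\erfc(\sqrt2(z-u))$ as in \eqref{f_z(u)}. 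Since $f_z'(u)=\tfrac{1}{\sqrt{2\pi}}\cdot\sqrt2\,{\rm e}^{-2(z-u)^2}=\sqrt{2/\pi}\,{\rm e}^{-2(z-u)^2}$, the Wronskian $W(f_w,f_z)(u)=f_w(u)f_z'(u)-f_z(u)f_w'(u)$ is an explicit product of an $\erfc$ and a Gaussian, and the claim reduces to a concrete integral identity between this and whatever the Laplace analysis delivers. I would verify it by differentiating both sides in $z$ (or $w$), checking the ODE/recursion they satisfy, and matching one boundary value (e.g. $z=w$, where the Wronskian integral and $G$ should both degenerate in a controlled way).

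Once the pre-kernel limit is established, the remaining steps are routine: the off-diagonal entries $\kappa_{\rm edge}^{\R}(z,\bar w)$ etc. follow by analytic continuation of the same formula (the symplectic kernel involves $\kappa_N$ evaluated at conjugated arguments, and the limit commutes with conjugation since convergence is uniform on compacts); assembling $K_{\rm edge}^{\R}$ from its four entries reproduces the stated $2\times2$ matrix; and convergence of $R_{N,k}$ to $R_k$ follows from \eqref{RNk Pf} together with the continuity of the Pfaffian, once uniform convergence of $K_N\to K_{\rm edge}^{\R}$ on compacts is in hand. To make the convergence uniform I would derive a uniform (in $z,w$ on compacts) error bound in the Laplace analysis — controlling the tails of the auxiliary integral and the subleading terms in the expansion of $\phi$ — rather than merely a pointwise limit.

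The main obstacle is the saddle-point analysis of the finite-$N$ pre-kernel near the edge: unlike the bulk case \eqref{kappa bulk}, where the relevant sum telescopes into a clean $\erf$, at the edge the summation index interacts nontrivially with the scaling, the critical point sits at the boundary of the summation range, and one must simultaneously handle the Gaussian weight, the edge truncation, and the $\zeta\leftrightarrow\eta$ coupling; getting the constant in $r_N$ and the ${\rm e}^{z^2+w^2}$ prefactor exactly right, and recognising the limiting integral as the Wronskian integral over $(-\infty,0)$, is where the real work lies. A secondary technical point is justifying the interchange of limit and integration uniformly, which I would handle with dominated convergence plus explicit Gaussian tail estimates.
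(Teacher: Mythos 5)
Your overall strategy (direct asymptotic analysis of the finite-$N$ skew-orthogonal-polynomial kernel at the rescaled edge point) is a legitimate route --- it is essentially the contour-integral/saddle-point approach of Khoruzhenko and Lysychkin mentioned in Remark (i) after the theorem --- but it is \emph{not} the route the paper takes, and as written your proposal leaves the entire analytic core unproved. The paper never performs a saddle-point analysis of the kernel itself. Instead it differentiates the finite-$N$ pre-kernel $G_N(z,w)=\sqrt{2}\sum_{k=0}^{N-1}\frac{(\sqrt{2N}+\sqrt2 z)^{2k+1}}{(2k+1)!!}\sum_{l=0}^{k}\frac{(\sqrt{2N}+\sqrt2 w)^{2l}}{(2l)!!}$ in $z$, observes that the double sum telescopes back into itself plus boundary terms which are incomplete Gamma functions $Q(2N,\lambda)$ and $Q\big(N,\widetilde\lambda\big)$, and thereby obtains an exact first-order ODE for $\widehat\kappa_N$ at finite $N$. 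The large-$N$ limit is then taken only in the inhomogeneous terms of the ODE, using the classical asymptotic $Q\big(a,a+\sqrt{2a}z\big)\to\tfrac12\erfc(z)$ and Stirling's formula; the limiting pre-kernel is identified as the unique \emph{anti-symmetric} solution of the limiting first-order ODE, and one checks by a one-line integration by parts that the Wronskian integral over $(-\infty,0)$ solves it. This completely sidesteps the two hardest points of your plan.

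The concrete gaps in your proposal are these. First, the finite-$N$ pre-kernel is a genuine \emph{double} sum, not ``a finite sum or integral in a single auxiliary variable''; your displayed formula contains placeholders and is not correct as stated, and the asymptotics of the double sum with the critical index at the boundary of the summation range --- which you yourself identify as ``where the real work lies'' --- is simply not carried out. Asserting that you ``expect to obtain'' the Wronskian integral is not a proof, and recognising $\int_{-\infty}^{0}W(f_w,f_z)(u)\,{\rm d}u$ in the output of a two-dimensional Laplace analysis is precisely the nontrivial step. Second, your claimed intermediate limit $\lim_{N\to\infty}r_N\,{\rm e}^{-\frac N2(Q(\zeta)+Q(\eta))}\kappa_N(\zeta,\eta)=\sqrt{\pi}\,{\rm e}^{z^2+w^2}G(z,w)$ is false as stated: expanding $\tfrac N2|\sqrt2+r_Nz|^2=N+2\sqrt N\re z+|z|^2$ against the holomorphic factor ${\rm e}^{2(\sqrt N+z)(\sqrt N+w)}$ in the pre-kernel leaves an oscillating cocycle ${\rm e}^{2{\rm i}\sqrt N\im(z+w)}$, so the weighted pre-kernel has no pointwise limit off the real axis; one must first strip off this cocycle (which cancels in the Pfaffian) before any convergence statement can hold, and your proposal never addresses this. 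Third, your fallback verification --- ``differentiating both sides in $z$, checking the ODE they satisfy, and matching one boundary value'' --- is in effect the paper's entire method, but you would still need to derive the ODE and, crucially, to justify why a single condition pins down the solution; the paper's observation that anti-symmetry of the pre-kernel serves as the initial condition for the first-order ODE is the missing uniqueness argument.
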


We remark that the pre-kernel $	\kappa_{{\rm edge}}^\R$ has the following alternative representation
\begin{gather*}
	\kappa_{{\rm edge}}^\R(z,w)={\rm e}^{2zw} \int_{-\infty}^{0} {\rm e}^{-t^2} \sinh( 2t(w-z) ) \erfc(z+w-t)\,{\rm d}t.
\end{gather*}
As a consequence of Theorem~\ref{Thm_edge kernel}, we obtain the following corollary.

\begin{Corollary} \label{Cor_Pf to det}
	Let $Q(\zeta)=|\zeta|^2$, $p_N=\pm \sqrt{2} {\rm e}^{{\rm i}\theta_N}$, where $\theta_N=\frac{t}{\sqrt{N}}$, $t \in \R$. Then $R_{N,k}(z_1,\dots, z_k)$ converges uniformly for $z_1,\dots, z_k$ in compact subsets of $\C$ to
	\begin{gather}\label{R_k^t Pf}
		R_k^t(z_1,\dots, z_k)= \prod_{j=1}^{k} ( \bar{z}_j-z_j-2{\rm i}t ) \Pf \big[ K_{{\rm edge}}^t(z_j,z_l) \big]_{ j,l=1 }^k,
	\end{gather}
	where
	\begin{gather*}
		K_{{\rm edge}}^t(z,w):={\rm e}^{-|z+{\rm i}t|^2-|w+{\rm i}t|^2}
		\begin{pmatrix}
			\kappa_{{\rm edge}}^\R (z+{\rm i}t,w+{\rm i}t) & \kappa_{{\rm edge}}^\R (z+{\rm i}t,\bar{w}-{\rm i}t)
			\vspace{1mm}\\
			\kappa_{{\rm edge}}^\R (\bar{z}-{\rm i}t,w+{\rm i}t) & \kappa_{{\rm edge}}^\R (\bar{z}-{\rm i}t,\bar{w}-{\rm i}t)
		\end{pmatrix}.
	\end{gather*}
	Moreover, as $t \to \infty$, we have
	\begin{gather} \label{Pf to det}
		R_k^t(z_1,\dots, z_k)= \det \big[ K_{{\rm edge}}^\C (z_j,z_l) \big]_{j,l=1}^k \cdot (1+o(1)),
	\end{gather}
	where the $o(1)$-term is uniform on $z_1,\dots, z_k$ in compact subsets of $\C$ and
	\begin{gather} \label{K erfc cplx}
		K_{{\rm edge}}^\C(z,w):={\rm e}^{ -|z|^2-|w|^2+2z\bar{w} } \, \tfrac{1}{2}\erfc(z+\bar{w}).
	\end{gather}
\end{Corollary}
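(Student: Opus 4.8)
\textit{Step 1: the identity \eqref{R_k^t Pf}.} My plan is to deduce this directly from Theorem~\ref{Thm_edge kernel} by a change of coordinates, with no new asymptotics. Since $Q(\zeta)=|\zeta|^2$ has $\Delta Q\equiv1$, the micro-scale \eqref{micro-scale} equals $r_N=\sqrt{2/N}$ at every boundary point, in particular at $\pm\sqrt2$ and at $p_N=\pm\sqrt2\,{\rm e}^{{\rm i}\theta_N}$, while the outer normal to $\partial S$ at $p_N$ is $\pm{\rm e}^{{\rm i}\theta_N}$. Comparing the definition \eqref{rescaling} at base point $\pm\sqrt2$ (rescaled points $\tilde z_j$) and at base point $p_N$ (rescaled points $z_j$), one finds, for either sign, $z_j={\rm e}^{-{\rm i}\theta_N}\tilde z_j+\sqrt N\,(1-{\rm e}^{{\rm i}\theta_N})\,{\rm e}^{-{\rm i}\theta_N}$; as ${\rm e}^{-{\rm i}\theta_N}\to1$ and $\sqrt N\,(1-{\rm e}^{{\rm i}t/\sqrt N})\to-{\rm i}t$, this affine map converges locally uniformly to $\tilde z\mapsto\tilde z-{\rm i}t$ with (holomorphic) Jacobian tending to $1$. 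Hence the locally uniform convergence in Theorem~\ref{Thm_edge kernel} gives $R_{N,k}\to R_k^t$ locally uniformly at $p_N$ with $R_k^t(z_1,\dots,z_k)=R_k(z_1+{\rm i}t,\dots,z_k+{\rm i}t)$. Substituting $\zeta_j=z_j+{\rm i}t$ in the Pfaffian formula of Theorem~\ref{Thm_edge kernel} and using $\overline{z+{\rm i}t}=\bar z-{\rm i}t$, the prefactor $\prod_j(\bar\zeta_j-\zeta_j)$ becomes $\prod_j(\bar z_j-z_j-2{\rm i}t)$ and $K^{\R}_{\rm edge}(z_j+{\rm i}t,z_l+{\rm i}t)$ becomes exactly $K^t_{\rm edge}(z_j,z_l)$; this is \eqref{R_k^t Pf}.

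\textit{Step 2: entrywise asymptotics as $t\to\infty$.} For \eqref{Pf to det} I would first determine the large-$t$ behaviour of the four entries of $K^t_{\rm edge}(z,w)$, uniformly for $z,w$ in a fixed compact set, starting from \eqref{kappa edge} (or the $\erfc$-representation in the Remark) with the shifted arguments. After cancelling the exponential factors ${\rm e}^{\pm2t^2}$ coming from ${\rm e}^{-|z+{\rm i}t|^2-|w+{\rm i}t|^2}$ and from ${\rm e}^{2zw}$ evaluated at the shifted arguments, one is left with integrals whose leading large-$t$ behaviour can be read off via the asymptotics of $\erfc$ along directions with large imaginary part (where it grows like ${\rm e}^{4t^2}$) and the endpoint asymptotics of the resulting oscillatory integral in the integration variable. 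I expect to obtain that the two holomorphic/anti-holomorphic entries $\kappa^{\R}_{\rm edge}(z+{\rm i}t,\bar w-{\rm i}t)$ and $\kappa^{\R}_{\rm edge}(\bar z-{\rm i}t,w+{\rm i}t)$ are of exact order $1/t$, equal up to an explicit conjugation-invariant gauge to a scalar multiple $c_t$ of $K^{\C}_{\rm edge}(z,w)$ with $(-2{\rm i}t)\,c_t\to1$, whereas the holomorphic/holomorphic and anti-holomorphic/anti-holomorphic entries are of the smaller order $O(1/t^{3})$ --- one extra power being gained from an odd symmetry of the integrand at the endpoint --- and vanish identically when $z=w$ by the antisymmetry of $\kappa^{\R}_{\rm edge}$.

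\textit{Step 3: Pfaffian collapse and conclusion.} Granting these asymptotics, I would reorganise $\Pf\big[K^t_{\rm edge}(z_j,z_l)\big]_{j,l=1}^k$ by listing first the $k$ holomorphic rows and columns and then the $k$ anti-holomorphic ones, which puts the $2k\times2k$ skew-symmetric matrix into block form $\begin{pmatrix}A&B\\-B^{\top}&D\end{pmatrix}$ with $A,D$ skew-symmetric of size $O(1/t^{3})$ and vanishing diagonal, and $B$ of size $O(1/t)$. A perfect-matching expansion of the Pfaffian then gives $\Pf=(-1)^{k(k-1)/2}\det B\,\bigl(1+O(1/t^{2})\bigr)$, since any matching that uses a holomorphic--holomorphic edge must also use an anti-holomorphic--anti-holomorphic one and is thus suppressed by $t^{-4}$ relative to the leading cross-matching sum $\det B$. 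Multiplying by $\prod_j(\bar z_j-z_j-2{\rm i}t)=(-2{\rm i}t)^k\bigl(1+O(1/t)\bigr)$, and using $\det B=c_t^{\,k}\det\big[K^{\C}_{\rm edge}(z_j,z_l)\big]\bigl(1+O(1/t)\bigr)$ (the gauge factors drop out by conjugation-invariance of the determinant, each index occurring once as a row and once as a column index) together with $(-2{\rm i}t)^k c_t^{\,k}\to1$, one obtains $R_k^t=\det\big[K^{\C}_{\rm edge}(z_j,z_l)\big]\bigl(1+o(1)\bigr)$ uniformly on compacts, which is \eqref{Pf to det}.

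\textit{Main obstacle.} Step~1 is soft. The substantive part is Step~2: the uniform-on-compacts asymptotic analysis of the $\erfc$-type integral representations of $\kappa^{\R}_{\rm edge}$ when the arguments carry a large imaginary part, where one must balance the competing exponentials ${\rm e}^{\pm2t^2}$ and ${\rm e}^{\pm4t^2}$, pin down the correct leading orders $1/t$ and $1/t^{3}$, and then propagate the error bounds through the Pfaffian expansion of Step~3.
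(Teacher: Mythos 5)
Your proposal follows essentially the same route as the paper: the first claim is obtained by observing that rescaling at $p_N$ amounts to the substitution $(z,w)\mapsto(z+{\rm i}t,w+{\rm i}t)$ in Theorem~\ref{Thm_edge kernel}, and the second by showing via the large-argument asymptotics of $\erfc$ that the cross entries of $K_{{\rm edge}}^t$ behave like $\tfrac{{\rm i}}{2t}$ times a cocycle multiple of $K_{{\rm edge}}^\C$ while the same-type entries are of lower order, after which a reordering reduces the Pfaffian of the block off-diagonal skew matrix to $(-1)^{k(k-1)/2}\det M$ exactly as in the paper. The only (harmless) discrepancy is your claimed $O\big(1/t^{3}\big)$ for the holomorphic--holomorphic entries, where the paper establishes $O\big(1/t^{2}\big)$; either bound is $o(1/t)$ and suffices for the Pfaffian collapse.
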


We emphasise that the kernel \eqref{K erfc cplx} corresponds to that of the complex Ginibre ensemble at the edge of the spectrum (up to a trivial rescaling), which is known to form a determinantal point process, see, e.g., \cite{MR2530159,MR1690355,kanzieper2005exact}. Therefore the convergence \eqref{Pf to det} implies that away from the real axis, the local edge statistics of the symplectic and complex Ginibre ensemble are equivalent in the large-$N$ limit.
We also refer the reader to \cite{Dubach2020,MR1986426} and \cite{akemann2019universal} for the equivalence of the symplectic and complex Ginibre ensemble in the context of the scaled maximal modulus and the local bulk statistics away from the real axis, respectively.

Before moving on to our next topic, let us give some further remarks on Theorem~\ref{Thm_edge kernel}. These complete the asymptotic study of the symplectic Ginibre ensemble with the Gaussian potential.

\begin{Remark}\samepage \quad
	\begin{enumerate}\itemsep=0pt
\item[(i)] Recently, the scaling limit of the symplectic Ginibre ensemble at the edge of the spectrum has been obtained independently by Khoruzhenko and Lysychkin \cite{BL2}, see also \cite{Lysychkin} for more details. Contrary to our approach using a differential equation satisfied by the pre-kernel, their methods are based on contour integral representations, which yields the same result.
	
\item[(ii)] 
		For $p \in \big({-}\sqrt{2},\sqrt{2}\big)$, the pre-kernel \eqref{kappa bulk} can be obtained from the same method used in the proof of Theorem~\ref{Thm_edge kernel}. Indeed, the pre-kernel \eqref{kappa bulk} can be represented by \eqref{kappa edge} with $E=(-\infty,\infty)$ using the well-known integral
		\begin{gather*}
			\int_{ \R } \frac{1}{\sqrt{2\pi} \sigma} {\rm e}^{ -\frac{(x-\mu)^2}{2\sigma^2} } \erf(ax+b) \, {\rm d}x=\erf\left( \frac{a\mu+b}{\sqrt{1+2a^2\sigma^2}} \right).
		\end{gather*}
		The same integral identity holds for the complementary error function, replacing $\erf\to\erfc$.
		
\item[(iii)] Theorem~\ref{Thm_edge kernel} can be generalised to a moving boundary point $p_N$. More precisely, if we set
		\begin{gather*}
			p_N:=\sqrt{2}-a\sqrt{ \frac{2}{N} } , \qquad a\in \R,
		\end{gather*}
		and rescale the process as \eqref{rescaling}, then the limiting correlation kernel is of the form \eqref{kappa edge} with $E=(-\infty,a).$ When $a \gg 1$ we recover the bulk limit due to~(ii).
		
\item[(iv)] It is well known that the local bulk/edge correlation kernel $K^\C$ of the random normal matrix ensemble is given by
		\begin{gather*}
		K^\C(z,w)={\rm e}^{z\bar{w}-|z|^2/2-|w|^2/2} \frac{1}{\sqrt{2\pi}} \int_{E} {\rm e}^{-(z+\bar{w}-u)^2/2 } \, {\rm d}u,
		\end{gather*}
		where $E=(-\infty,\infty)$ for the bulk case and $E=(-\infty,0)$ for the edge case, see \cite{MR2817648, hedenmalm2017planar}.
		Thus one can interpret the integral representation \eqref{kappa edge} as a symplectic analogue of the above expression.
		
\item[(v)]	In \cite{MR3192169}, the authors studied the ensemble \eqref{Gibbs} with the elliptic potential
		\begin{gather*}
			Q(\zeta)=\tfrac{1}{1-\tau^2} \big( |\zeta|^2-\tau \re \zeta^2 \big), \qquad \tau \in [0,1).
		\end{gather*}
		For this model, the local correlation kernel at the right/left endpoint of the spectrum was derived in the regime of weak non-Hermiticity when $1-\tau=O\big( N^{-1/3}\big)$. The kernel \eqref{kappa edge} was derived there as well from the large argument limit of such an intermediate process, see \cite[equation~(4.28)]{MR3192169}.
		
\item[(vi)] It was proved in \cite{MR4229527} that the limiting local kernel of the chiral complex Ginibre ensemble at multi-criticality is given by the edge kernel of the complex Ginibre ensembles in squared variables. A quaternion version of such a result will appear in future work.
	\end{enumerate}
\end{Remark}

Next, we investigate ensembles containing certain types of singularities at the origin. More precisely, we consider the \textit{Mittag-Leffler ensemble} whose associated potential $Q$ is of the form
\begin{gather} \label{Q ML}
	Q(\zeta)=|\zeta|^{2\lambda}-\frac{2c}{N} \log|\zeta|, \qquad \lambda>0, \quad c>-1.
\end{gather}
Here the condition $c>-1$ is required to guarantee $Z_N < \infty$, where $Z_N$ is the partition function in \eqref{Gibbs}. Note that when $c \not=0$, a ``conical singularity'' at the origin arises from an insertion of a point charge $c$.
On the other hand, since the limiting global density of the ensemble is $\Delta \tfrac12 Q(\zeta)=\tfrac{\lambda^2}2 |\zeta|^{2\lambda-2}$, when $\lambda>1$ (resp., $\lambda<1$) it vanishes (resp., diverges) at the origin.

We aim to discover the local statistics of the symplectic Mittag-Leffler ensemble, which provides ``non-standard'' or multi-critical universality classes \eqref{K ML} beyond \eqref{kappa bulk}.
Away from the origin we expect to be back in the Gaussian universality class of the Ginibre ensemble.
Note that the micro-scale at the origin is given here by
\begin{gather} \label{micro scale ML}
	r_N=\left(\frac{\lambda}{2}N\right)^{-\frac{1}{2\lambda}}.
\end{gather}
For each $\lambda>0$ and $c>-1$, the rescaled limiting local kernel $K_{\lambda,c}$ at $p=0$ is of the form
\begin{gather} \label{K ML}
	K_{\lambda,c}(z,w)
	={\rm e}^{ -\frac{ |z|^{2\lambda}+|w|^{2\lambda} }{\lambda} } \begin{pmatrix}
		\kappa_{\lambda,c}(z,w) & \kappa_{\lambda,c}(z,\bar{w}) \\
		\kappa_{\lambda,c}(\bar{z},w) & \kappa_{\lambda,c}(\bar{z},\bar{w})
	\end{pmatrix} ,
\end{gather}
where $\kappa_{\lambda,c}$ is a holomorphic function in both variables (with branch cuts).
Since the correlation functions do not depend on the specific choice of these branches, we will ignore this issue in the sequel, such a monodromy of the pre-kernel can also be interpreted as a cocycle. (Cf.\ Section~\ref{Subsec_bdy Ginibre} for the definition of a cocycle.)
In general, we derive a certain $(1/\lambda)$-order fractional differential equation for $\kappa_{\lambda,c}$ (Proposition~\ref{Thm_bulkS}), which can be recognised as a version of the Christoffel--Darboux formula for the kernel~\eqref{K ML}.
Moreover, for $\lambda=1/m$ $(m\in \mathbb{N})$, we obtain an explicit formula for $\kappa_{\lambda,c}$ by solving the associated differential equation of order~$m$.

To describe the local kernel of the Mittag-Leffer ensemble, let us first recall the definitions of the Mittag-Leffler functions. By definition, the \emph{two-parametric Mittag-Leffler function} $E_{a,b}$ is given by
\begin{gather} \label{ML function}
	E_{a,b}(z):=\sum_{k=0}^{\infty} \frac{z^k}{\Gamma(ak+b)},
\end{gather}
and the \emph{three-parametric $($Kilbas--Saigo$)$ Mittag-Leffler function} $E_{\alpha,m,l}$ is given by
\begin{gather} \label{ML3}
	E_{\alpha,m,l}(z):=1+\sum_{k=1}^\infty z^k \prod_{j=0}^{k-1} \frac{ \Gamma(\alpha(jm+l)+1) }{ \Gamma(\alpha(jm+l+1)+1) },
\end{gather}
see \cite[Chapters~4 and~5]{gorenflo2014mittag}.
For a positive integer $m$ we write
\begin{gather}\label{F integer}
	F_{m,c}(z):=z^{m(1+c)-1} E_{2m,m(1+c)}\big(z^{2m}\big).
\end{gather}
For each $j =1,\dots,m$, we write
\begin{gather} \label{m indep sol}
g_{j,m}(z):=z^{j-1} E_{m,2,1+\frac{j-1}{m}}\big(z^{2m}\big),
\end{gather}
and
\begin{gather} \label{W_j(z)}
W_{j,m}(z):=W( g_{1,m},\dots, g_{j-1,m}, g_{j+1,m}, \dots, g_{m,m} ),
\end{gather}
where $W$ is the Wronskian.

\begin{Theorem} \label{Thm_ML integer}
	For $Q(\zeta)=|\zeta|^{2/m}-\frac{2c}{N} \log|\zeta|$ with $\lambda=1/m$ $(m\in \mathbb{N})$, $c>-1$, we have
	\begin{gather*}
		\kappa_{\lambda,c}(z,w)=\left( \frac{2}{\lambda} \right)^{ \frac{1}{2\lambda} } (zw)^{\lambda-1} \widetilde{\kappa}_{\lambda,c} \left( \sqrt{ \frac{2}{\lambda} } z^\lambda, \sqrt{ \frac{2}{\lambda} } w^\lambda \right),
	\end{gather*}
	where for $m=1$,
	\begin{gather*}
	\widetilde{\kappa}_{1,c}(z,w)= \int_0^1 \big(z {\rm e}^{ \frac12 (1-s^2)z^2 } -w {\rm e}^{ \frac12 (1-s^2)w^2 }\big)F_{1,c}(szw)\,{\rm d}s,
	\end{gather*}
	and for $m>1$,
	\begin{gather*}
	\widetilde{\kappa}_{\lambda,c}(z,w)= \sum_{j=1}^{m} \frac{(-1)^{m-j}}{G(m+1)} \int_0^1 \big( z g_{j,m}(z)W_{j,m}(sz)-w g_{j,m}(w)W_{j,m}(sw)\big)F_{m,c}(szw)\,{\rm d}s.
	\end{gather*}
	Here $G(m+1):=\prod_{j=1}^{m-1} j!$ is the Barnes $G$-function.
\end{Theorem}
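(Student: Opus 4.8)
\textit{Proof proposal.} The plan is to convert the finite-$N$ Pfaffian structure into a linear ODE for the limiting pre-kernel and then to solve that equation using Mittag--Leffler functions as a fundamental system. Because $Q$ in \eqref{Q ML} is radially symmetric, the skew-orthogonal polynomials for the ensemble \eqref{Gibbs} are $q_{2k+1}(\zeta)=\zeta^{2k+1}$ and $q_{2k}(\zeta)=\sum_{l=0}^{k}\big(\prod_{i=l}^{k-1}h_{2i+2}/h_{2i+1}\big)\zeta^{2l}$, where the monomial norms $h_p:=\int_{\C}|\zeta|^{2p}\,{\rm e}^{-NQ(\zeta)}\,{\rm d}A(\zeta)=\tfrac1\lambda N^{-(p+c+1)/\lambda}\Gamma\big(\tfrac{p+c+1}{\lambda}\big)$ are elementary Gamma integrals; the skew-norms are $r_k=2h_{2k+1}$, and
\[
  \bfkappa_N(\zeta,\eta)=\sum_{k=0}^{N-1}\frac{\zeta^{2k+1}q_{2k}(\eta)-q_{2k}(\zeta)\,\eta^{2k+1}}{2h_{2k+1}}.
\]
Inserting the rescaling $\zeta=r_Nz$, $\eta=r_Nw$ at $p=0$ with $r_N$ as in \eqref{micro scale ML}, multiplying by the appropriate power of $r_N$ and absorbing into the pre-kernel the factor $(zw)^{c}$ produced by the point-charge term (which is the source of its branch cuts), one checks from the explicit $h_p$ that the $N$-dependence cancels term by term in the resulting double sum, so that $\kappa_{\lambda,c}$ exists as a convergent double power series, with uniform convergence on compacts following from Stirling bounds on the coefficients. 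Alternatively one may simply start from Proposition~\ref{Thm_bulkS}.

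For $\lambda=1/m$ the fractional Christoffel--Darboux identity of Proposition~\ref{Thm_bulkS} becomes, in the variable $z$ with $w$ fixed, a genuine linear ODE of order $m$ whose right-hand side is a reproducing-kernel-type term. Substituting $z\mapsto\widetilde z:=\sqrt{2/\lambda}\,z^{\lambda}$ — whose Jacobian together with the indicial shift coming from the $-\tfrac{2c}{N}\log|\zeta|$ term accounts exactly for the prefactor $(2/\lambda)^{1/(2\lambda)}(zw)^{\lambda-1}$ — turns the homogeneous equation into the standard $m$-th order equation whose entire solutions are, by their defining series \eqref{m indep sol}, precisely $g_{1,m},\dots,g_{m,m}$, and turns the source into a multiple of $F_{m,c}(\widetilde z\,\widetilde w)$. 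Since the equation carries no term of order $m-1$, Abel's identity gives $W(g_{1,m},\dots,g_{m,m})\equiv G(m+1)$ (the constant value of the Wronskian of $1,z,\dots,z^{m-1}$), so the classical variation-of-parameters formula for this operator with fundamental system $\{g_{j,m}\}$ has Cauchy kernel $\tfrac1{G(m+1)}\sum_{j}(-1)^{m-j}g_{j,m}(\widetilde z)\,W_{j,m}(u)$; integrating it from the origin against the source ($u=s\widetilde z$, $s\in[0,1]$) produces the particular solution in exactly the form displayed in the statement. The antisymmetrization in $z$ and $w$ is forced by $\kappa_{\lambda,c}(z,w)=-\kappa_{\lambda,c}(w,z)$ together with $F_{m,c}(\widetilde z\,\widetilde w)=F_{m,c}(\widetilde w\,\widetilde z)$, and the absence of any added homogeneous term $\sum_j c_j g_{j,m}(\widetilde z)$ is pinned down by comparing lowest-order coefficients with the series of the previous paragraph (equivalently, by the requirement that $\kappa_{\lambda,c}$ degenerate into the Gaussian bulk pre-kernel \eqref{kappa bulk}, cf.\ the remark following Corollary~\ref{Cor_Pf to det}). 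For $m=1$ the equation is first order and can be integrated outright — using, say, the Gaussian convolution identity quoted in that same remark — which yields the simpler closed form with Gaussian factor ${\rm e}^{\frac12(1-s^2)z^2}$ and $F_{1,c}$.

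The delicate point is the middle step: extracting the $m$-th order ODE in the correct normalization from Proposition~\ref{Thm_bulkS}, identifying its space of entire solutions with $\lspan\{g_{j,m}\}$ and its Cauchy kernel with $\sum_j(-1)^{m-j}g_{j,m}W_{j,m}/G(m+1)$ — which hinges on the Wronskian evaluation $W(g_{1,m},\dots,g_{m,m})=G(m+1)$ — and then fixing all integration constants via antisymmetry and the series expansion. The remaining ingredients (the Gamma integral for $h_p$, the cancellation of the $N$-dependence, the convergence estimates, and the $m=1$ reduction to error functions) are routine.
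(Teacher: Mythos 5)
Your proposal follows essentially the same route as the paper: reduce Proposition~\ref{Thm_bulkS} to an $m$-th order linear ODE for $\lambda=1/m$, take $\{g_{j,m}\}$ as the fundamental system with constant Wronskian $G(m+1)$ via Abel's identity (no order-$(m-1)$ term), apply variation of parameters, and kill the homogeneous part by matching the $z\to0$ asymptotics $\widetilde{G}(z,w)\sim w^{m+mc-1}z^{2m+mc-1}/\Gamma(2m+mc)$ against the series expansion. The only detail you leave implicit is the beta-function identity the paper uses to confirm that the particular solution itself reproduces this leading coefficient, but this is exactly the "comparing lowest-order coefficients" step you describe, so the argument is sound.
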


\begin{Remark}
	For any $m \ge 1$, one can write $	\widetilde{\kappa}_{\lambda,c}$ in a unified way as
	\begin{gather*}
	\widetilde{\kappa}_{\lambda,c}(z,w)= \sum_{j=1}^{m} (-1)^{m-j} \int_0^1 \frac{ z g_{j,m}(z)W_{j,m}(sz)-w g_{j,m}(w)W_{j,m}(sw) }{ W(g_{1,m},\dots,g_{m,m})(sz) } F_{m,c}(szw)\,{\rm d}s.
	\end{gather*}
\end{Remark}

Theorem \ref{Thm_ML integer} is our second main result, and we shall present some examples for $m=1,2$.

\begin{Example}[$\lambda=1$]
	We first discuss the case $\lambda=1$, where the Mittag-Leffler ensemble corresponds to the eigenvalue statistics of the so-called \emph{induced symplectic Ginibre ensemble}. This name was proposed for the matrix representation of real and complex Ginibre ensembles in the presence of zero eigenvalues~\cite{MR2881072}.
	We refer to~\cite{MR2180006} for the complex eigenvalue correlation functions in the symplectic Ginibre ensemble in the presence of zero eigenvalues in terms of skew-orthogonal polynomials at finite-$N$.
	
	For $m=1$, we have $g_{1,1}(z)={\rm e}^{z^2/2}$.
	This immediately gives
	\begin{gather} \label{kappa c alt2}
		\kappa_{1,c}(z,w)= 2 (2zw)^c \int_0^1 s^c \big(z{\rm e}^{(1-s^2)z^2}-w{\rm e}^{(1-s^2)w^2}\big) E_{2,1+c}\big((2szw)^2\big)\,{\rm d}s. 	
	\end{gather}
	Note that by \eqref{ML function}, we have
	\begin{gather*}
		2 E_{2,1+c}(z^2) = E_{1,1+c}(z)+E_{1,1+c}(-z)
		={\rm e}^z z^{-c} P(c,z)+{\rm e}^{-z} (-z)^{-c} P(c,-z),
	\end{gather*}
	where
	\begin{gather*}
		P(c,z):=\frac{1}{\Gamma(c)}\int_{0}^{z} t^{c-1} {\rm e}^{-t} \, {\rm d}t, \qquad c>0,
	\end{gather*}
	is the (regularised) incomplete Gamma function. Using this, we have an alternative representation for $c\ge 0$,
	\begin{gather*}
	\kappa_{1,c}(z,w)= \int_0^1\!\! \big(z{\rm e}^{(1-s^2)z^2}\!-w{\rm e}^{(1-s^2)w^2}\big)\big( {\rm e}^{2szw} P(c,2szw)+(-1)^{-c} {\rm e}^{-2szw} P(c,-2szw) \big)\,{\rm d}s.
	\end{gather*}
	
	Furthermore, for a non-negative integer $c$ one can express the pre-kernel~\eqref{kappa c alt2} in terms of error functions. For this we denote by $(a)_n$ Pochhammer's symbol:
	\begin{gather*}
	(a)_0=1,\qquad (a)_n=a(a+1)\cdots (a+n-1).
	\end{gather*}
	Then for an even integer $c=2n$ $(n\in \mathbb{N})$ we have
	\begin{gather}
			\kappa_{1,c}(z,w)=\sqrt{\pi} {\rm e}^{z^2+w^2} \erf(z-w)	+\sum_{ 0 \le l < k \le \frac{c}{2}-1 } \frac{ w^{2k} z^{2l+1}-z^{2k} w^{2l+1} }{k! (1/2)_{l+1} }
			\nonumber\\
\hphantom{\kappa_{1,c}(z,w)=}{}
 +\sqrt{\pi} {\rm e}^{w^2}\erf(w) \sum_{k=0}^{c/2-1} \frac{ z^{2k} }{k!} -\sqrt{\pi} {\rm e}^{z^2}\erf(z) \sum_{k=0}^{c/2-1} \frac{ w^{2k} }{k!},\label{kappa c even}
	\end{gather}
	and for an odd integer $c=2n-1$ $(n\in \mathbb{N})$, we have
\begin{gather}
\kappa_{1,c}(z,w)=\sqrt{\pi} {\rm e}^{z^2+w^2} ( \erf(z-w)-\erf(z) +\erf(w) )+\sum_{ 1 \le l < k \le \frac{c-1}{2} } \frac{ w^{2k-1} z^{2l}-z^{2k-1} w^{2l} }{l!(1/2)_k}\nonumber\\
\hphantom{\kappa_{1,c}(z,w)=}{} + ({\rm e}^{w^2}-1) \sum_{k=1}^{(c-1)/2} \frac{ z^{2k-1}}{(1/2)_k } - ({\rm e}^{z^2}-1) \sum_{k=1}^{(c-1)/2} \frac{ w^{2k-1}}{(1/2)_k}.\label{kappa c odd}
\end{gather}
	Here we use the convention that the summation with an empty index equals zero.
	Both \eqref{kappa c even} and \eqref{kappa c odd} can be obtained by straightforward computations using
	\begin{align*}
		P(n,z)=1-{\rm e}^{-z} \sum_{k=0}^{n-1} \frac{z^k}{k!}, \qquad n=0,1,2,\dots,
	\end{align*}
	see, e.g., \cite[equation~(8.4.9)]{olver2010nist}.
	
	In \cite{MR2302902}, the $k$-point correlation function $R_{N,k}$ for an integer-valued point charge $c$ is presented in a different way as the ratio of Pfaffians of the correlation kernels of $c=0$.
	
	\begin{figure}[h!]
		\begin{subfigure}{0.48\textwidth}
			\begin{center}	
		 \includegraphics[width=0.6667\textwidth]{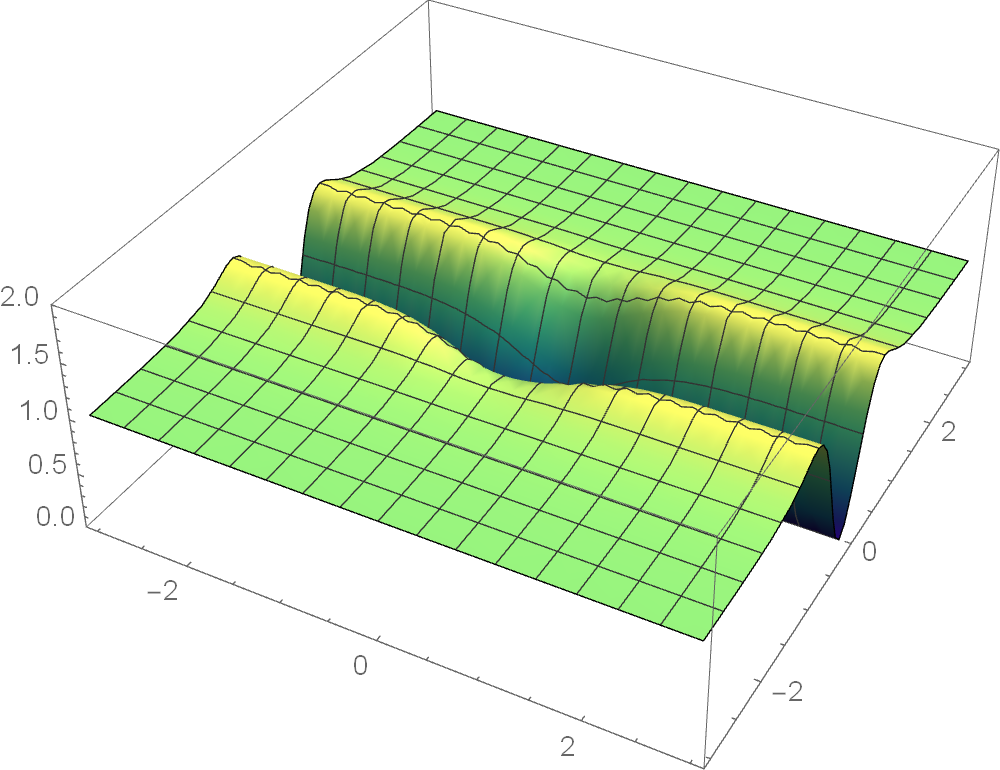}
			\end{center}
			\subcaption{$c=\frac12$}
		\end{subfigure}	
		\begin{subfigure}[h]{0.48\textwidth}
			\begin{center}
		\includegraphics[width=0.6667\textwidth]{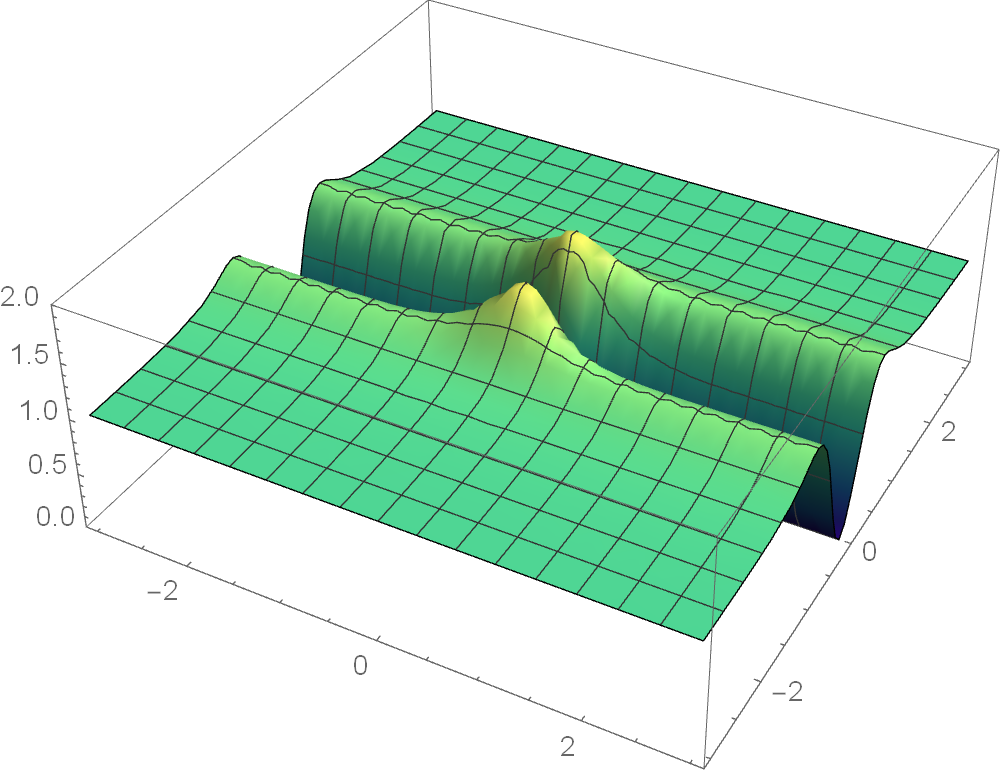}
			\end{center} \subcaption{$c=-\frac12$}
		\end{subfigure}
		
		\begin{subfigure}{0.48\textwidth}
			\begin{center}	
		 \includegraphics[width=0.6667\textwidth]{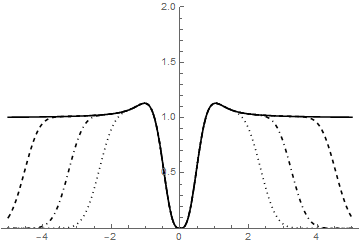}
			\end{center}
			\subcaption{$c=\frac12$, $x=0$}
		\end{subfigure}	
		\begin{subfigure}[h]{0.48\textwidth}
			\begin{center}
		\includegraphics[width=0.6667\textwidth]{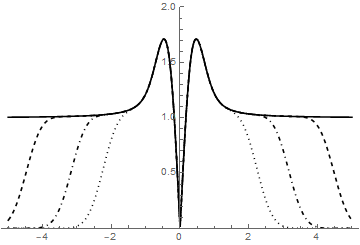}
			\end{center} \subcaption{$c=-\frac12$, $x=0$}
		\end{subfigure}
		\caption{Plots (a) and (b) show the microscopic density $R_{N,1}$, where $\lambda=1$, $c=\pm \frac12$ and $N=50$. Compared to Figure~\ref{Fig. RN_Gaussian}(a) when $c=0$, one can observe an additional local repulsion (resp., attraction) when $c>0$ (resp., $c<0$) at the origin.	Plots (c) and (d) are graphs of $R_{N,1}(x+{\rm i}y)$ and their comparisons with the large-$N$ limit $R_1(x+{\rm i}y)$ restricted to $x=0$. Here $N=5$ (dotted line), $N=10$ (dot-dashed line), $N=20$ (dashed line) and $N=\infty$ (full line).		} \label{Fig. RN_Insertion}
	\end{figure}
\end{Example}

\begin{Example}[$\lambda=\frac12$] For $m=2$, we have
	\begin{gather}\label{g 12 22}
		g_{1,2}(z)=\tfrac{1}{\sqrt{2}}\Gamma\big(\tfrac34\big) \sqrt{z} I_{-\frac14}\big(\tfrac{z^2}{2}\big), \qquad g_{2,2}(z)=\sqrt{2} \Gamma\big(\tfrac54\big) \sqrt{z} I_{\frac14} \big( \tfrac{z^2}{2} \big),
	\end{gather}
	where $I_\nu$ is the modified Bessel function of the first kind \cite[Chapter~10]{olver2010nist}:
	\begin{gather*}
		I_\nu(z):=\sum_{k=0}^\infty \frac{(z/2)^{2k+\nu}}{k! \Gamma(\nu+k+1)}.
	\end{gather*}
	Then using \eqref{g 12 22}, we have
	\begin{gather} \label{kappa 12 Wronskian}
		\kappa_{\frac12,c}(z,w)=\pi 2^{\frac72} (16zw)^{c}
		\int_0^1 s^{c} (z h_s(z)-wh_s(w)) E_{4,2+2c}\big((16szw)^2\big) \,{\rm d}s,	
	\end{gather}
	where
	\begin{gather*}
		h_s(z):= s^{\frac14} \big( I_{-\frac14}(2sz)I_{\frac14}(2z) - I_{\frac14}(2sz) I_{-\frac14}(2z) \big).
	\end{gather*}
	
	In particular, for $c =0,-\frac12$, the pre-kernel \eqref{kappa 12 Wronskian} can also be expressed as
	\begin{gather}
	\begin{split}
& \kappa_{\frac12,0}(z,w)= \frac{2}{\sqrt{z w}} \int_{0}^{\frac{\pi}{2}} (\sin \theta)^{-\frac12} \sinh\big( 4 \sqrt{zw} \sin\theta) \sinh( 2 (z-w) \cos\theta ) \, {\rm d}\theta,
		\\
& \kappa_{\frac12,-\frac12}(z,w)= \frac{2}{\sqrt{z w}} \int_{0}^{\frac{\pi}{2}} (\sin \theta)^{-\frac12} \cosh\big( 4 \sqrt{zw} \sin\theta\big) \sinh( 2 (z-w) \cos\theta ) \, {\rm d}\theta.
	\end{split} \label{kappa 12 special}
	\end{gather}
The expression \eqref{kappa 12 special} follows from \cite[Appendix~B]{MR2180006} due to the relation between the Mittag-Leffler potential at $m=2$, $Q(\zeta)=|\zeta|-\frac{2c}{N}\log|\zeta|$ for $c=0,-\frac12$, and the potential of the chiral symplectic Ginibre ensemble at maximal non-Hermiticity $\tau=0$, $Q(\zeta)=-\frac{1}{N}\log \big[|\zeta|^{2\nu}K_{2\nu}(N|\zeta|)\big]$ (corresponding to $\mu=1$ and the change of variables $\zeta=z^2$ therein). Namely, at $\nu=\pm\frac14$ the modified Bessel-function of the second kind simplifies, $K_{\pm\frac12}(x)=\sqrt{\frac{\pi}{2x}}{\rm e}^{-x}$, matching the two cases for $c=0,-\frac12$ up to an additive constant. In~\cite{MR2180006} the relation between the chiral symplectic Ginibre ensemble in the origin scaling limit and quantum chromodynamics with two colours and chemical potential was pointed out, cf.~\cite{ABi06}.
We also refer to the recent work~\cite{akemann2021skew} where the pre-kernel at the origin was determined in the more general case of the chiral elliptic potential, with $\tau \in [0,1)$.
	
Notice, however, that the equivalence between \eqref{kappa 12 Wronskian} and \eqref{kappa 12 special} is far from being obvious. One can verify this by showing that both of these expressions satisfy the same differential equation of second order (Proposition~\ref{Thm_bulkS}) with the same initial conditions, which uniquely determine the solution. In particular, the initial conditions can be easily checked using the integral representation \cite[equation~(11.5.6)]{olver2010nist} of the modified Struve function $\textbf{L}_\nu$.
	
	\begin{figure}[h!]
		\begin{subfigure}{0.32\textwidth}
			\begin{center}	
				\includegraphics[width=\textwidth]{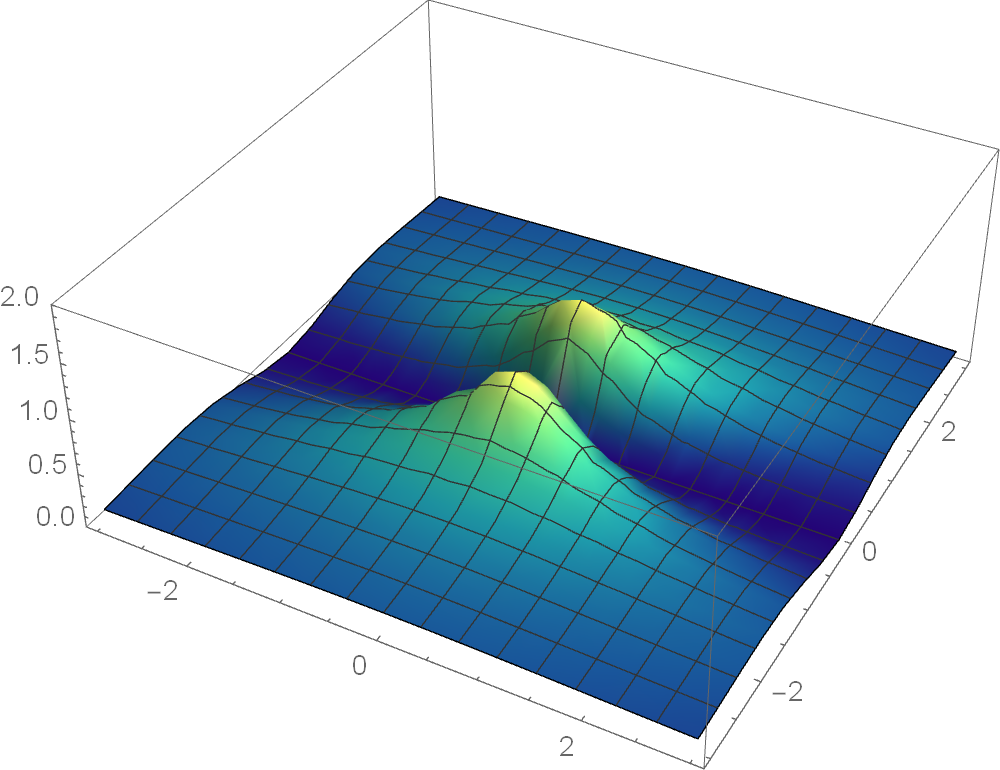}
			\end{center}
			\subcaption{$c=\frac14$}
		\end{subfigure}	
		\begin{subfigure}{0.32\textwidth}
			\begin{center}	
				\includegraphics[width=\textwidth]{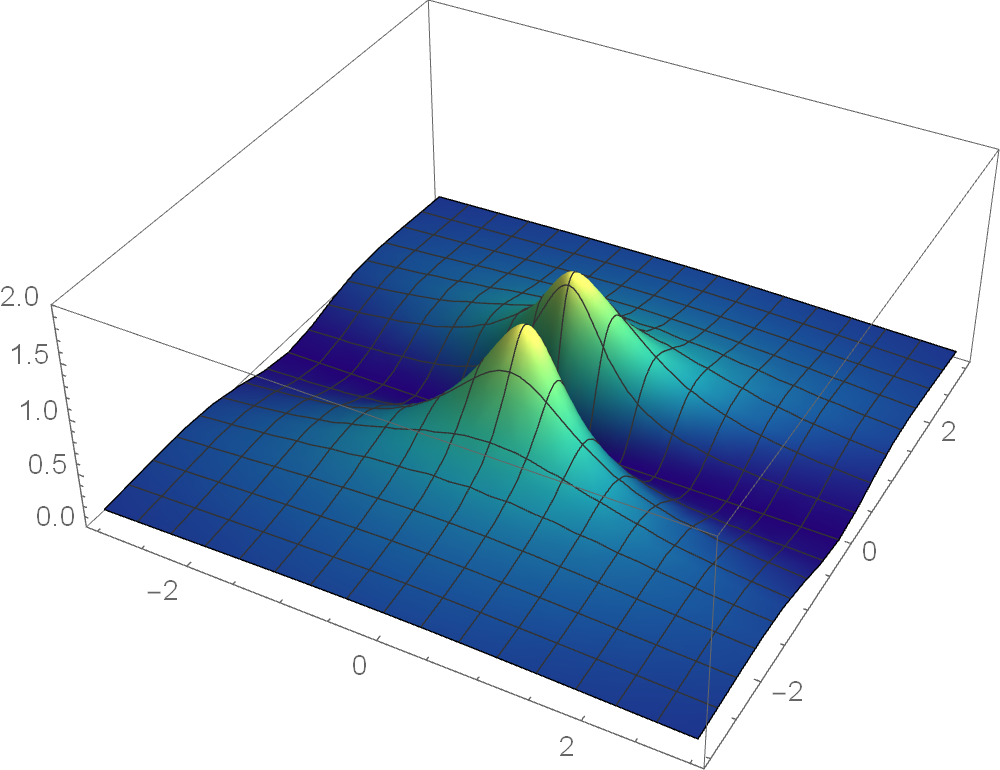}
			\end{center}
			\subcaption{$c=0$}
		\end{subfigure}	
		\begin{subfigure}{0.32\textwidth}
			\begin{center}	
				\includegraphics[width=\textwidth]{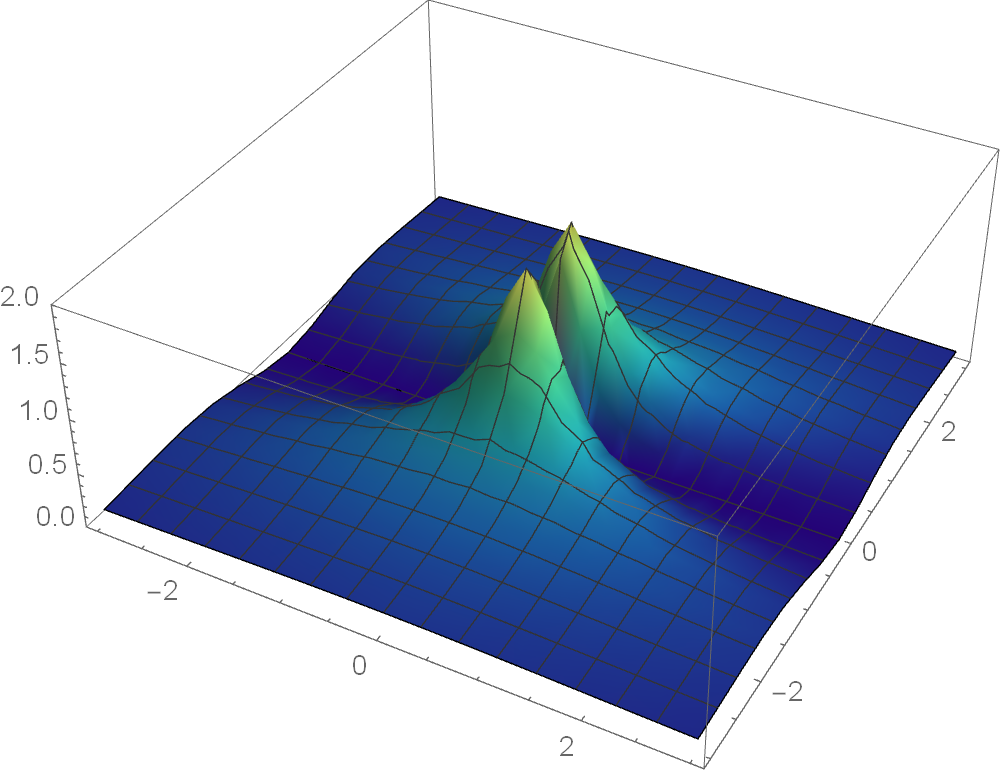}
			\end{center}
			\subcaption{$c=-\frac14$}
		\end{subfigure}
		
			\begin{subfigure}{0.32\textwidth}
			\begin{center}	
				\includegraphics[width=\textwidth]{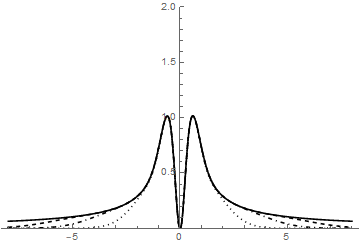}
			\end{center}
			\subcaption{$c=\frac14$, $x=0$}
		\end{subfigure}	
		\begin{subfigure}{0.32\textwidth}
			\begin{center}	
				\includegraphics[width=\textwidth]{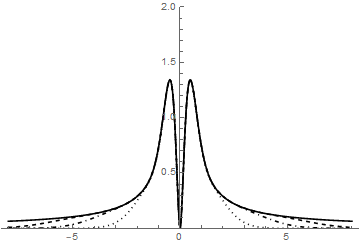}
			\end{center}
			\subcaption{$c=0$, $x=0$}
		\end{subfigure}	
		\begin{subfigure}{0.32\textwidth}
			\begin{center}	
				\includegraphics[width=\textwidth]{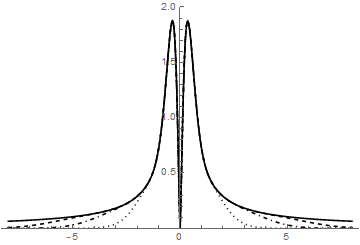}
			\end{center}
			\subcaption{$c=-\frac14$, $x=0$}
		\end{subfigure}
		\caption{Plots (a)--(c) display the surface graphs of the $1$-point function $R_{N,1}$, where $\lambda=\frac12$, $c= 0,\pm \frac14$ and $N=50$. Plots (d)--(f) are graphs of $R_{N,1}(x+{\rm i}y)$ and their comparisons with the large-$N$ limit $R_1(x+{\rm i}y)$ restricted to $x=0$. Here $N=2$ (dotted line), $N=4$ (dot-dashed line), $N=6$ (dashed line) and $N=\infty$ (full line).} \label{Fig. BS}
	\end{figure}
\end{Example}

In the third part, let us turn to the symplectic ensemble with general external potential $Q$. We present two important functional equations satisfied by the correlation kernel, the \textit{mass-one} and \textit{Ward's equation}.

First, we define the \textit{Berezin kernel}
\begin{gather*}
	B_N(z,w):=\frac{R_{N}(z)R_{N}(w)-R_{N,2}(z,w)}{ R_{N}(z)}=R_N(w)-R_{N-1}^{(z)}(w),
\end{gather*}
where $R_{N-1}^{(z)}(w):=R_{N,2}(z,w)/R_N(z)$ is the $1$-point function of the $N$-point process, conditioned to contain the prescribed point $z$.
See Figure~\ref{Fig. BN_Gaussian} for graphs of the Berezin kernel.

\begin{figure}[h!]
	\begin{subfigure}{0.48\textwidth}
		\begin{center}	
			\includegraphics[width=0.6667\textwidth]{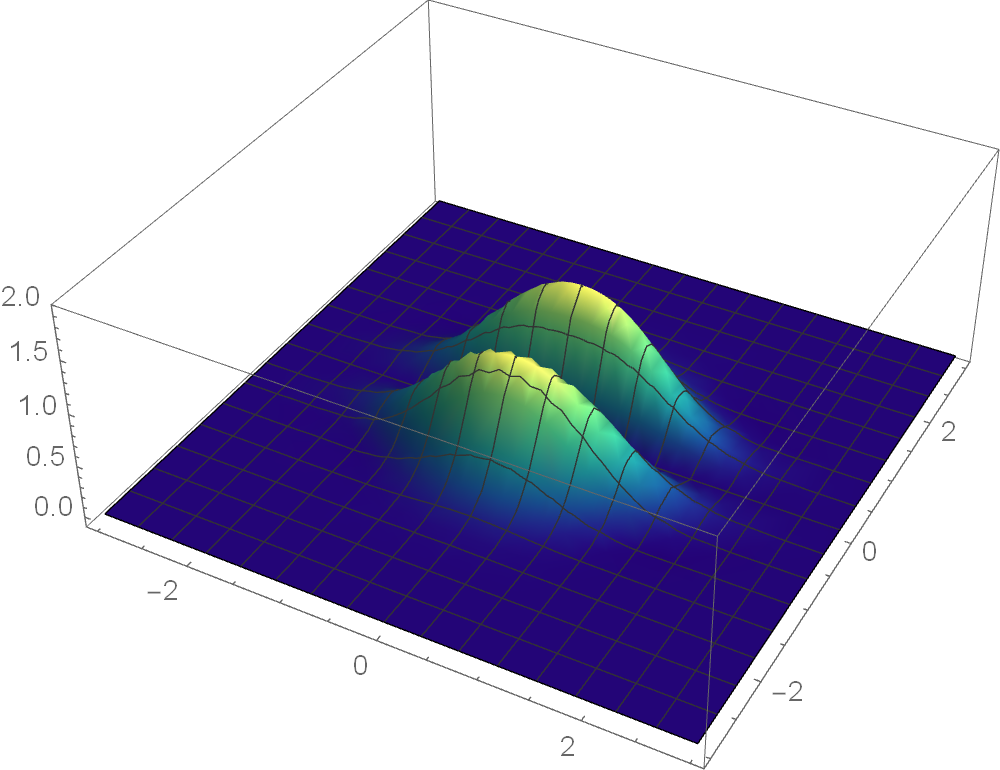}
		\end{center}
		\subcaption{$p=0$}
	\end{subfigure}	
	\begin{subfigure}[h]{0.48\textwidth}
		\begin{center}
			\includegraphics[width=0.6667\textwidth]{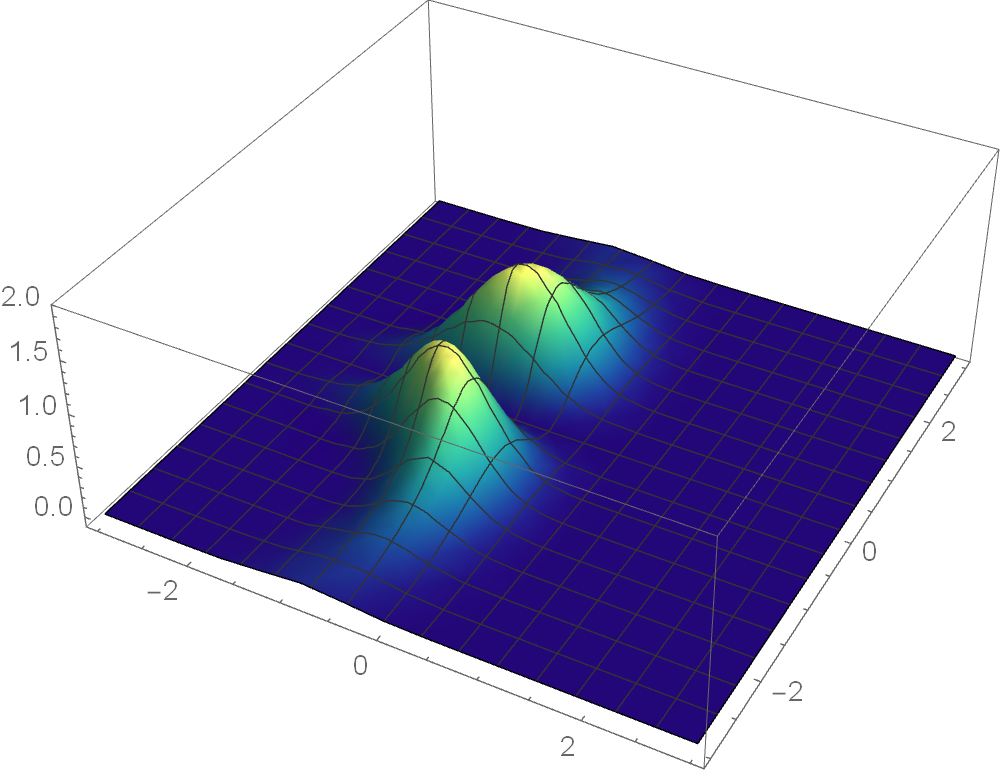}
		\end{center} \subcaption{$p=\sqrt{2}$}
	\end{subfigure}
	
	\begin{subfigure}{0.32\textwidth}
		\begin{center}	
			\includegraphics[width=\textwidth]{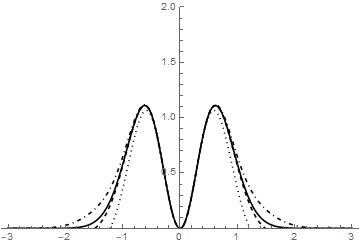}
		\end{center}
		\subcaption{$p=0$, $x=0$}
	\end{subfigure}	
	\begin{subfigure}{0.32\textwidth}
		\begin{center}	
			\includegraphics[width=\textwidth]{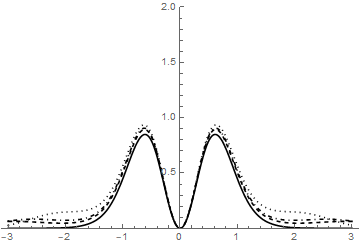}
		\end{center}
		\subcaption{$p=\sqrt{2}$, $x=-1$}
	\end{subfigure}	
	 \begin{subfigure}{0.32\textwidth}
		\begin{center}	
			\includegraphics[width=\textwidth]{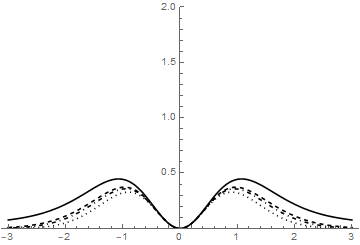}
		\end{center}
		\subcaption{$p=\sqrt{2}$, $x=0$}
	\end{subfigure}	
	\caption{Plots (a) and (b) show surface graphs of the Berezin kernel $B_N(0,w)$ for the Gaussian potential $Q(\zeta)=|\zeta|^2$, where $N=30$.
	Plot (c) is a graph of $B_N(0,x+{\rm i}y)$ and its comparison with its large $N$ limit $B(0,x+{\rm i}y)$ restricted to $x=0$. Here $N=2$ (dotted line), $N=3$ (dot-dashed line), $N=4$ (dashed line) and $N=\infty$ (full line).
	The plots (d) and (e) are graphs for $p=\sqrt{2}$ restricted to $x=-1$ and $x=0$ respectively, where $N=10$ (dotted line), $N=20$ (dot-dashed line), $N=30$ (dashed line) and $N=\infty$ (full line). As in Figure~\ref{Fig. RN_Gaussian}(c)--(e), the speed of convergence is faster in the bulk than at the edge. } \label{Fig. BN_Gaussian}
\end{figure}

By the definition of $R_{N,k}$ in \eqref{bfRNk def}, one can easily see that the mass-one equation
\begin{gather}\label{mass-one N}
	\int_{ \C } B_N(z,w)\,{\rm d}A(w)=1
\end{gather}
holds for finite-$N$. For a Pfaffian $\infty$-point process, let us define the associated Berezin kernel as
\begin{gather*}
B(z,w):=\frac{R(z)R(w)-R_{2}(z,w)}{ R(z)},
\end{gather*}
where $R_k:=\lim\limits_{N \to \infty} R_{N,k}$ and $R=R_1$.
Here and in the sequel, a point process is called Pfaffian $\infty$-point process if its correlation functions $R_k$ are expressed in terms of the Pfaffian of a certain correlation kernel.
The kernel is only unique up to a cocycle, as discussed in more detail in Section~\ref{Subsec_bdy Ginibre}.
By definition, such a Pfaffian $\infty$-point process is said to satisfy the \emph{mass-one equation} if
\begin{gather} \label{mass-one}
	\int_{ \C } B(z,w)\, {\rm d}A(w)=1.
\end{gather}

\begin{Proposition} \label{Thm_mass1}
	The Pfaffian $\infty$-point process with kernel
	\begin{gather} \label{K Gkernel E}
		K_{a}^\R(z,w)={\rm e}^{-|z|^2-|w|^2}
		\begin{pmatrix}
			\kappa_{a}^\R (z,w) & \kappa_{a}^\R (z,\bar{w})
			\vspace{1mm}\\
			\kappa_{a}^\R (\bar{z},w) & \kappa_{a}^\R (\bar{z},\bar{w})
		\end{pmatrix},
	\end{gather}
	specified by
	\begin{gather} \label{kappa a}
		\kappa_{a}^\R(z,w)
		:= \sqrt{\pi} {\rm e}^{z^2+w^2} \int_{-\infty}^a W(f_w,f_z)(u) \,{\rm d}u,
	\end{gather}
	satisfies the mass-one equation. Here $a \in \R$ and $f_z$ is given in \eqref{f_z(u)}.
\end{Proposition}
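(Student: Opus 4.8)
The plan is to turn the mass-one equation \eqref{mass-one} into an explicit Gaussian integral and evaluate it directly. Since $W(f_w,f_z)=-W(f_z,f_w)$, the pre-kernel is antisymmetric, $\kappa_{a}^\R(z,w)=-\kappa_{a}^\R(w,z)$; in particular $\kappa_{a}^\R(z,z)=0$, so each diagonal block $K_{a}^\R(z,z)$ in \eqref{K Gkernel E} is genuinely skew-symmetric and $R(z)=(\bar z-z)\,{\rm e}^{-2|z|^2}\kappa_{a}^\R(z,\bar z)$. Expanding the $4\times4$ Pfaffian defining $R_2$ (cf.\ \eqref{RNk Pf}) and subtracting $R(z)R(w)$ gives $R(z)R(w)-R_{2}(z,w)=(\bar z-z)(\bar w-w)\det K_{a}^\R(z,w)$, so that, writing $\kappa=\kappa_{a}^\R$,
\begin{gather*}
	B(z,w)=(\bar w-w)\,{\rm e}^{-2|w|^2}\,\frac{\kappa(z,w)\kappa(\bar z,\bar w)-\kappa(z,\bar w)\kappa(\bar z,w)}{\kappa(z,\bar z)}.
\end{gather*}

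Next I would recast $\kappa_{a}^\R$ in a form suited to integration. Put $G_z(u):=f_z'(u)=\sqrt{2/\pi}\,{\rm e}^{-2(z-u)^2}$, so that $f_z(u)=\int_{-\infty}^u G_z$, $\int_\R G_z=1$, and $W(f_w,f_z)=f_wG_z-G_wf_z$. Integrating by parts,
\begin{gather*}
	I(z,w):=\int_{-\infty}^{a}W(f_w,f_z)(u)\,{\rm d}u=\iint_{u,v<a}\sgn(u-v)\,G_z(u)\,G_w(v)\,{\rm d}u\,{\rm d}v ,
\end{gather*}
which is visibly antisymmetric, and $\kappa_{a}^\R(z,w)=\sqrt{\pi}\,{\rm e}^{z^2+w^2}I(z,w)$. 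Since $G_{\bar z}(u)=\overline{G_z(u)}$ for real $u$, we have $I(\bar z,\bar w)=\overline{I(z,w)}$; in particular $N(z):=I(z,\bar z)$ is purely imaginary (and nonzero wherever $R(z)>0$), and $\kappa_{a}^\R(z,\bar z)=\sqrt{\pi}\,{\rm e}^{z^2+\bar z^2}N(z)$. Substituting and using $-2|w|^2+w^2+\bar w^2=-4(\im w)^2$,
\begin{gather*}
	B(z,w)=\frac{\sqrt{\pi}}{N(z)}\,(\bar w-w)\,{\rm e}^{-4(\im w)^2}\,\bigl[I(z,w)I(\bar z,\bar w)-I(z,\bar w)I(\bar z,w)\bigr].
\end{gather*}

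Now write $w=X+{\rm i}Y$. The factor ${\rm e}^{-4Y^2}$ is independent of $X$, so $\int_\C B(z,w)\,{\rm d}A(w)$ is an iterated Gaussian integral, which I would evaluate in the order: first the $X$-integral (an elementary Gaussian, turning the product of $I$'s into a quadruple integral over $u,v,u',v'<a$ carrying a factor ${\rm e}^{4Y^2}{\rm e}^{-(v'-v)^2}$ together with opposite oscillatory factors ${\rm e}^{\mp 4{\rm i}Y(v'-v)}$ in the two terms); then the $Y$-integral, using $\int_\R y\sin(\beta y)\,{\rm d}y=-2\pi\delta'(\beta)$, which produces the distribution $\delta'(v'-v)$; then the $v,v'$-integrals against $\delta'$, which collapse $\sgn(u-v)\sgn(u'-v')$ to $2\sgn(u-u')$. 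One is then left with precisely
\begin{gather*}
	\int_\C B(z,w)\,{\rm d}A(w)=\frac{1}{N(z)}\iint_{u,u'<a}\sgn(u-u')\,G_z(u)\,G_{\bar z}(u')\,{\rm d}u\,{\rm d}u'=\frac{N(z)}{N(z)}=1 .
\end{gather*}

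The one genuinely delicate point, which I expect to be the main obstacle, is the justification of this last step: the iterated integral is not absolutely convergent — the $\im w$-integral is merely conditionally convergent, which is exactly why a $\delta'$ rather than an ordinary function appears. I would handle this either by inserting a Gaussian regulator ${\rm e}^{-\varepsilon|w|^2}$, carrying out the now absolutely convergent computation, and letting $\varepsilon\downarrow0$ (first checking, from the $\erfc$-asymptotics of $\kappa_{a}^\R(z,w)$ as $|w|\to\infty$, that $B(z,\cdot)\in L^1(\C)$, so that dominated convergence applies), or by bypassing the computation altogether: as recorded in the remarks following Corollary~\ref{Cor_Pf to det}, the pre-kernel $\kappa_{a}^\R$ (with $E=(-\infty,a)$) is the scaling limit of the symplectic Ginibre edge pre-kernel at the moving boundary point $p_N=\sqrt{2}-a\sqrt{2/N}$, so $B=\lim_N B_N$ locally uniformly; since each finite-$N$ process satisfies the exact mass-one equation \eqref{mass-one N}, a uniform integrable majorant for the rescaled Berezin kernels (i.e.\ no escape of mass) would then yield \eqref{mass-one} in the limit.
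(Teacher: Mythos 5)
Your proposal is correct and follows essentially the same route as the paper: reduce the mass-one equation to the integral identity $\kappa(z,\bar z)=2\int_\C(\bar w-w)\,{\rm e}^{-2|w|^2}|\kappa(z,w)|^2\,{\rm d}A(w)$ and evaluate the Gaussian integrals distributionally via $\int_\R y\,{\rm e}^{{\rm i}\beta y}\,{\rm d}y\propto\delta'(\beta)$. The only difference is organizational — you first convert the Wronskian integral into $\iint\sgn(u-v)f_z'(u)f_w'(v)$ so that a single $\delta'$ term does all the work, whereas the paper keeps $f$ and $f'$ and tracks separate $\delta'$, $\delta$ and Heaviside contributions before integrating by parts — and your closing remarks on regularisation are, if anything, more careful than the paper's purely formal treatment.
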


For a correlation kernel $K$ with (associated) pre-kernel $\kappa$ of the form
\begin{gather} \label{K regular}
		K(z,w)={\rm e}^{-|z|^2-|w|^2}
		\begin{pmatrix}
			\kappa (z,w) & \kappa (z,\bar{w})
			\\
			\kappa (\bar{z},w) & \kappa (\bar{z},\bar{w})
		\end{pmatrix},
	\end{gather}
the mass-one equation is equivalent to{\samepage
\begin{gather} \label{mass1 var}
	\kappa(z,\bar{z})=2 \int_{ \C } (\bar{w}-w) {\rm e}^{ -2|w|^2 } |\kappa(z,w)|^2 \, {\rm d}A(w).
\end{gather}
Thus Proposition~\ref{Thm_mass1} indicates that the kernels~\eqref{kappa a} are solutions to the integral equa\-tion~\eqref{mass1 var}.$\!$}

In the opposite direction, for a general potential~$Q$, suppose that the associated limiting Pfaffian point process $\boldsymbol{z}$ satisfies the mass-one equation~\eqref{mass1 var}. From an analytic point of view, this equation can be regarded as an integral equation satisfied by a pre-kernel~$\kappa$.
If one can characterise the solution to~\eqref{mass1 var} under some conditions derived from intrinsic properties of~$Q$, and the nature of the rescaling point~$p$ (e.g., whether $p \in \textrm{int}(S) \cap \R$ or $p \in \partial S \cap \R$), this provides an alternative way to obtain all possible candidates for the scaling limit of the symplectic ensemble associated with potential~$Q$.
This would show local universality as this approach does not depend on the specific choice of~$Q$.
(We refer to~\cite{MR3975882,MR4030288} for extensive studies on the universality for random normal matrix ensembles based on this approach.)
However, the mass-one equation~\eqref{mass1 var1} may have further solutions beyond~\eqref{kappa bulk} and~\eqref{kappa edge}.
Therefore, one needs additional information about the kernel to determine the solution uniquely.
This calls for an investigation of Ward's equation.

\begin{Remark}[Ward's equation and universality for the random normal matrix ensemble with translation invariant kernel]
We pause here to briefly introduce recent developments in Ward's equation and its significance in the context of universality for the random normal matrix ensemble.
Hopefully this gives more intuition as to why we aim to examine Ward's equation for symplectic ensembles as well.

For random normal matrix ensembles, the rescaled version of Ward's equation was studied in~\cite{MR3975882}. An important feature of this equation is that in the large-$N$ limit, it does not depend on the choice of potential $Q$ if $p$ is regular, in that sense that $0<\Delta Q(p) <\infty$ is non-vanishing and bounded. Due to this property, Ward's equation has been utilised to show the local universality conjectures in various situations, see, e.g., \cite{AB21, MR4030288, ameur2018random} and references therein.
To be more precise, the overall strategy for the universality proof using Ward's equation is as follows.
\begin{itemize}\itemsep=0pt
 \item \emph{Derivation of Ward's equation} (cf.\ \cite[Theorem~1.3]{MR3975882}, \cite[Lemma~2]{MR4030288}, \cite[Lemma~3.1]{ameur2018random}). The first step is to show that for a $C^2$-smooth potential $Q$ and a regular point $p$, the following form of Ward's equation holds:
 \begin{gather}\begin{split}
 & \bp \mathcal{C}(z)=\mathcal{R}(z)-1-\Delta \log \mathcal{R}(z), \\
 & \mathcal{C}(z):=\frac{1}{\mathcal{R}(z)}\int_\C \frac{\mathcal{R}(z)\mathcal{R}(w)-\mathcal{R}_2(z,w)}{z-w}\,{\rm d}A(w),
 \end{split}\label{Ward cplx}
 \end{gather}
 where $\mathcal{R} \equiv \mathcal{R}_1$ and $\mathcal{R}_2$ are the (limiting) rescaled $1$- and $2$-point functions of the random normal matrix ensemble with potential $Q$, i.e.,
 \begin{gather*}
 \mathcal{R}_k(z)=\lim_{N \to \infty} \frac{1}{(N \Delta Q(p))^k} \boldsymbol{\mathcal{R}}_{N,k}\left(p+\frac{z}{\sqrt{N\Delta Q(p)}}\right).
 \end{gather*}
 Here $\boldsymbol{\mathcal{R}}_{N,k}$ denotes the unscaled $k$-point functions.
 We emphasise that due to the rescaling factor $\sqrt{N\Delta Q(p)}$ chosen according to the mean eigenvalue spacing of the random normal matrix model, the equation \eqref{Ward cplx} does not depend on the choice of~$Q$.
 \item \emph{Structure of the correlation kernel} (cf.\ \cite[Theorem~1.1]{MR3975882}, \cite[Theorem~1.3]{ameur2018random}). The next step is to show that for a general potential $Q$ and a regular point $p$, the limiting correlation kernel $\mathcal{K}$ exists and is of the form
 \begin{gather*}
 \mathcal{K}(z,w)=\mathcal{G}(z,w) \Psi(z,w), \qquad \mathcal{G}(z,w):={\rm e}^{z\bar{w}-|z|^2/2-|w|^2/2},
 \end{gather*}
 where $\Psi$ is Hermitian, i.e., $\Psi(z,w)=\overline{ \Psi(w,z) }$ and $\Psi$ is entire as a function of $z$ and $\bar{w}$.

 \item \emph{Characterisation of translation-invariant solution to Ward's equation} (cf.\ \cite[Theorem~4]{MR4030288}). The third step is to show that the only non-trivial horizontally translation-invariant solutions $\mathcal{R}$ (i.e., $\mathcal{R}(z+t)=\mathcal{R}(z)$ for $t \in \R$
 along the real axis) to Ward’s equation \eqref{Ward cplx} are given by
 \begin{gather*}
 \mathcal{R}(z)=\frac{1}{\sqrt{2\pi}} \int_I {\rm e}^{-(2 \im z-t)^2/2}\,{\rm d}t,
 \end{gather*}
 where $I$ is a connected interval. Due to this step, the possible translation-invariant scaling limit is determined only by one interval~$I$.

 \item \emph{Specifying the interval} (cf.\ \cite[Theorem~1.5]{MR3975882}, \cite[Theorem~3.8]{AB21}). The interval $I$ depends on the situation. For instance, $I=\R$ if $p$ is in the bulk \cite{MR2817648,MR3342661} and $I=(-\infty,0)$ if $p$ is at the edge \cite{MR3975882,hedenmalm2017planar} of the droplet. Furthermore, $I=(-a,a)$ for some $a>0$ if one considers the almost-Hermitian limit \cite{AB21} or $p$ is close to a cusp type singularity \cite{MR4030288}. Determining the interval in each situation requires a separate analysis.

 \item \emph{Translation invariance of the correlation functions}. The translation invariance is easy to check if $Q$ is radially symmetric. As a consequence, for a radially symmetric potential, edge universality was shown in \cite[Theorem~1.8]{MR3975882}.
\end{itemize}

We remark that for a more general class of non-radially symmetric potentials, edge universality was recently shown by Hedenmalm and Wennman in \cite{hedenmalm2017planar}.
The authors used a different approach based on the theory of quasi-orthogonal polynomials.
However, this theory is not directly applicable to the symplectic ensemble since it is far from obvious how to construct
skew-orthogonal polynomials using (quasi-)orthogonal polynomials in general
(see however \cite{akemann2021skew}).

In contrast, the overall strategy using Ward's equation described above can be applied in parallel to the symplectic ensemble.
This is the primary purpose of our remaining discussion.
In the rest of this section, as an analogue of \cite{MR4030288}, we aim to particularly address the characterisation of the translation-invariant solution to Ward's equation for the symplectic ensemble.
\end{Remark}

Our next result is the resulting Ward's equation for symplectic ensembles. For this, let
\begin{gather*}
C_N(z):=\int_{ \C }\frac{B_N(z,w)}{z-w}\, {\rm d}A(w).
\end{gather*}
Then, we obtain the following form of Ward's equation at finite-$N$ for general $Q$.

\begin{Proposition} \label{Thm_Ward N}
	Suppose that $Q$ is $C^2$-smooth and $p \in \R$.
	Then for each $N,$ we have
	\begin{gather}		\label{Ward rescale N}
		\bar{\partial} C_N(z)= R_N(z) -\frac{Nr_N^2}{2} \Delta Q( p+r_N z )
		-\frac12 \Delta \log R_N(z) + \frac{ 1 }{ (z-\bar{z})^2 }.
	\end{gather}
\end{Proposition}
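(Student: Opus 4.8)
The plan is to derive Ward's equation from the reparametrisation invariance of the partition function, following the standard route but adapted to the Hamiltonian \eqref{Ham} with its extra repulsion term $-\sum_j\log|\zeta_j-\bar\zeta_j|^2$. Concretely, I would start from the partition function $Z_N=\int e^{-\bfH_N(\boldsymbol\zeta)}\prod_j{\rm d}A(\zeta_j)$ and apply an infinitesimal perturbation $\zeta_j\mapsto\zeta_j+\eps\,v(\zeta_j)$ for a fixed test vector field $v$ (say compactly supported and smooth), then demand that the first-order term in $\eps$ vanishes. This produces an integrated Ward identity at finite $N$: the Jacobian contributes $\sum_j\d v(\zeta_j)$, the variation of $NQ$ contributes $-N\sum_j\re(v(\zeta_j)\,\d Q(\zeta_j))$ (using $Q(\zeta)=Q(\bar\zeta)$ and pairing with the conjugate field $\bar v$), and the variation of the logarithmic interaction together with the conjugation term $|\zeta_j-\bar\zeta_j|^2$ produces the ``two-body'' term $\sum_{j\ne k}\tfrac{v(\zeta_j)-v(\zeta_k)}{\zeta_j-\zeta_k}$ plus analogous pieces involving $\zeta_j-\bar\zeta_k$ and $\zeta_j-\bar\zeta_j$. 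Taking expectations and writing everything in terms of $\bfR_{N,1}$ and $\bfR_{N,2}$ yields the unscaled Ward identity, tested against $v$.

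Next I would localise: choose $v(\zeta)=\tfrac{1}{\zeta-z_0}\psi(\zeta)$ with a cutoff, or more cleanly work with the distributional identity obtained by letting $v$ run over $\tfrac{1}{\zeta-z_0}$, so that the two-body sum collapses into $C_N$-type Cauchy transforms. Here the key subtlety is the conjugation term $-\log|\zeta_j-\bar\zeta_j|^2$: its variation contributes a term $\sum_j\tfrac{v(\zeta_j)-\overline{v(\zeta_j)}}{\zeta_j-\bar\zeta_j}$ (schematically), and under the rescaling \eqref{rescaling}–\eqref{zeta eta} with $p\in\R$ this is exactly what generates the anomalous $\tfrac{1}{(z-\bar z)^2}$ term on the right-hand side of \eqref{Ward rescale N}; the self-interaction $\tfrac{1}{\zeta_j-\bar\zeta_j}$ becomes $\tfrac{1}{r_N(z_j-\bar z_j)}$ and, after the appropriate $\bar\partial$ acting on it and the $r_N^{-2}$ from the rescaling of $B_N$ and $C_N$, leaves a finite contribution. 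I would then apply $\bar\partial_{z}$ to the identity: $\bar\partial\tfrac{1}{z-w}=\pi\delta(z-w)$ picks out the diagonal, converting the $\bfR_{N,2}$ term into $-\Delta\log\bfR_{N,1}$ via the standard identity $\bar\partial\partial\log\bfR_{N,1}(z) = $ (contact term from the two-point function) — this is where the $-\tfrac12\Delta\log R_N$ appears (the factor $\tfrac12$ reflecting that the symplectic kernel carries two copies of each point).

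After that it is bookkeeping: rescale $\zeta=p+r_Nz$ (recall $p\in\R$ so $\theta=0$), use $R_N(z)=r_N^2\bfR_{N,1}(\zeta)$ and the micro-scale relation \eqref{micro-scale}, and track how the potential term $-N\re(v\,\d Q)$ becomes $-\tfrac{Nr_N^2}{2}\Delta Q(p+r_Nz)$ after the $\bar\partial$ is applied (using $\bar\partial\d Q = \Delta Q$ and the factor $\tfrac12$ from $\Delta=\partial\bar\partial$ versus the real part). The Berezin-kernel repackaging $C_N(z)=\int B_N(z,w)/(z-w)\,{\rm d}A(w)$ is just the definition, so the content is entirely in the finite-$N$ integrated identity and the careful $\bar\partial$-differentiation.

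The main obstacle I expect is the careful treatment of the conjugation/self-energy terms: unlike the random normal matrix case, here the field $v$ interacts with $\bar\zeta_j$ both through the pairwise factor $|\zeta_j-\bar\zeta_k|$ in $\bfH_N$ and through the diagonal factor $|\zeta_j-\bar\zeta_j|^2$, and one must be scrupulous about (a) whether to vary $\zeta_j$ alone or $\zeta_j$ and $\bar\zeta_j$ consistently (the latter is forced since $\bar\zeta_j$ is not independent), (b) the resulting non-holomorphic pieces, and (c) showing that in the scaling limit all the ``cross'' terms involving $\zeta_j-\bar\zeta_k$ for $j\ne k$ either reassemble into $C_N$ or vanish, leaving precisely the single anomalous term $\tfrac{1}{(z-\bar z)^2}$. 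Controlling the boundary/decay so that the integration-by-parts in the perturbation argument is legitimate (using $C^2$-smoothness of $Q$ and the fast decay of $e^{-\bfH_N}$) is a secondary technical point but routine given the finite-$N$ setting.
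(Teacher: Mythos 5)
Your overall strategy is the paper's: the first variation of $Z_N$ under $\zeta_j\mapsto\zeta_j+\eps\,\psi(\zeta_j)$ is exactly the integration-by-parts identity $\E_N[\d\psi_N(\zeta_j)]=\E_N[\d_j\Ham_N\,\psi_N(\zeta_j)]$ that the paper starts from, and the subsequent steps (collect the four terms, rescale, divide by $R_N$, apply $\bar\partial$) are the same. The anomalous term is indeed produced by the diagonal factor: $\partial_j\log|\zeta_j-\bar\zeta_j|^{-2}=-2/(\zeta_j-\bar\zeta_j)$, which after rescaling and $\bar\partial$ gives $1/(z-\bar z)^2$.

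However, the one genuinely non-routine point — which you flag as your ``main obstacle'' but do not resolve — is the treatment of the cross terms $\sum_{j\ne k}(\zeta_j-\bar\zeta_k)^{-1}$. You suggest these ``either reassemble into $C_N$ or vanish in the scaling limit,'' but the proposition is an exact finite-$N$ identity, so nothing can be deferred to a limit. The mechanism is the conjugation symmetry of the Hamiltonian, which gives $\bfR_{N,2}(\zeta,\eta)=\bfR_{N,2}(\zeta,\bar\eta)$; the substitution $\eta\mapsto\bar\eta$ then shows $\int_\C\frac{\bfR_{N,2}(\zeta,\eta)}{\zeta-\bar\eta}\,{\rm d}A(\eta)=\int_\C\frac{\bfR_{N,2}(\zeta,\eta)}{\zeta-\eta}\,{\rm d}A(\eta)$, so the cross sum exactly doubles the direct sum. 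Dividing the resulting identity by $2$ is what produces every factor $\tfrac12$ in \eqref{Ward rescale N}. Relatedly, your attribution of the $-\tfrac12\Delta\log R_N$ term is off: it does not come from the $\bfR_{N,2}$ integral via a contact term, nor does the $\tfrac12$ reflect ``two copies of each point.'' It comes from the Jacobian term $\sum_j\d\psi_N(\zeta_j)$, which after integration by parts yields $-\tfrac12\,\partial R_N(z)$ on the right-hand side; dividing by $R_N(z)$ gives $-\tfrac12\,\partial\log R_N(z)$, and applying $\bar\partial$ gives $-\tfrac12\Delta\log R_N(z)$. With these two points corrected your plan goes through; the rest is bookkeeping, as you say.
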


Let us discuss the large-$N$ limit of Ward's equation.
Since the micro-scale $r_N$ is given by~\eqref{micro-scale}, if $p$ is regular, we have
\begin{gather} \label{rN regular Lap}
r_N= \sqrt{ \frac{2}{N \Delta Q(p)} }(1+o(1)).
\end{gather}
This leads to
\begin{gather*}
\lim_{N \to \infty} \frac{Nr_N^2}{2} \Delta Q( p+r_N z )=1.
\end{gather*}
Therefore if we formally take the large-$N$ limit of Ward's equation \eqref{Ward rescale N} and if $R$ is non-trivial, for a general potential $Q$ and $p$ regular, we arrive at
\begin{gather} \label{Ward}
	\bar{\partial} C(z)=R(z)-1-\dfrac12 \Delta \log R(z)+\dfrac{1}{(z-\bar{z})^2},
\end{gather}
where
\begin{gather*}
C(z):=\int_{ \C }\frac{B(z,w)}{z-w}\, {\rm d}A(w).
\end{gather*}
One may compare Ward's equation \eqref{Ward} for the symplectic ensemble with that for the random normal matrix ensemble~\eqref{Ward cplx}.

\begin{Remark}[limiting Ward's equation at a singular point of Mittag-Leffler type]
As an analogue of \cite{ameur2018random}, we briefly discuss the limiting form of Ward's equation at a singular point of Mittag-Leffler type.
For $\lambda>0$, let $\widetilde{Q}$ be a potential satisfying $\widetilde{Q}(\zeta)=|\zeta|^{2\lambda}+o\big(|\zeta|^{2\lambda}\big)$ as $\zeta \to 0$. Then we consider a potential $Q$ of the form
\begin{gather*}
	Q(\zeta)=\widetilde{Q}(\zeta)-\frac{2c}{N}\log |\zeta|, \qquad c>-1.
\end{gather*}
By \eqref{micro scale ML}, in the sense of distributions, we have
\begin{gather*}
\frac{Nr_N^2}{2} \Delta Q( r_N z )=\lambda |z|^{2\lambda-2}+\frac{c}{2} \delta_0(z)+ o(1),
\end{gather*}
where $\delta_0$ is the Dirac delta at the origin. Taking the limit $N \to \infty$ of the equation \eqref{Ward rescale N}, at least formally we arrive at the distributional Ward's equation
\begin{gather} \label{Ward lim NS}
	\bar{\partial} C(z)=R(z)-\lambda |z|^{2\lambda-2}-\frac{c}{2} \delta_0(z)-\dfrac12 \Delta \log R(z)+\dfrac{1}{(z-\bar{z})^2}.
\end{gather}
For $\lambda \not= 1$ and $c \not = 0$, this form of Ward's equation at a singular point differs from \eqref{Ward}.
Notice that equation \eqref{Ward lim NS} does not depend on the particular choice of $\widetilde{Q}.$
\end{Remark}

For the symplectic ensemble with a general potential $Q$ and for a regular point $p$, suppose that the limiting correlation kernel exists and is of the form \eqref{K regular}.
To obtain the Gaussian factor ${\rm e}^{-|z|^2-|w|^2}$ in \eqref{K regular}, notice the Taylor series expansion:
\begin{gather*}
\frac{N}{2} Q(p+r_Nz) = \frac{N}{2} Q(p)+ \frac{Nr_N}{2} \big( \partial_z Q(p) z +( \bp_z Q )(p) \bar{z} \big)+ \frac{N r_N^2}{4} \big( \partial_z^2 Q (p)z^2 + \bp_z^2 Q (p) \bar{z}^2 \big)
 \\
\hphantom{\frac{N}{2} Q(p+r_Nz) =}{}
 +\frac{N r_N^2}{2} \Delta Q(p) |z|^2+o(1).
\end{gather*}
Then it follows from \eqref{rN regular Lap} and \eqref{bfK Pf} that the second line on the right-hand side contributes to the Gaussian factor, whereas the first line contributes to the pre-kernel.
We refer to \cite[Section~3.5]{MR3975882} for a similar computation.

In the spirit of \cite{MR4030288}, we aim to characterise (horizontally) translation-invariant kernels, that is
those invariant under translations along the real axis. Moving away from the real axis will change the universality class.

It is not difficult to see, that a pre-kernel $\kappa$ of the form
\begin{gather}\label{kappa Psi}
	\kappa(z,w)={\rm e}^{z^2+w^2} \Psi(z-w),
\end{gather}
leads to (horizontally) translation-invariant correlation functions,
when $\Psi$ is some odd function.
Inserting the pre-kernel \eqref{kappa Psi} with correlation kernel \eqref{K regular} into
the $k$-point function $R_k$, and using rules for the Pfaffian determinant leads to
\begin{align*}
R_k(z_1,\dots, z_k) & = \prod_{j=1}^k (\bar{z}_j-z_j) \Pf\left[ {\rm e}^{-|z_j|^2-|z_l|^2} 	\begin{pmatrix}
			{\rm e}^{z_j^2+z_l^2} \Psi(z_j-z_l) & {\rm e}^{z_j^2+\bar{z}_l^2} \Psi(z_j-\bar{z}_l)
			\\
			{\rm e}^{\bar{z}_j^2+z_l^2}\Psi(\bar{z}_j-z_l) & {\rm e}^{\bar{z}_j^2+\bar{z}_l^2 } \Psi(\bar{z}_j-\bar{z}_l)
		\end{pmatrix} \right]_{j,l=1}^k
		\\
		 & = \prod_{j=1}^k (\bar{z}_j-z_j){\rm e}^{(\bar{z}_j-z_j)^2} \Pf\left[ 	\begin{pmatrix}
			\Psi(z_j-z_l) & \Psi(z_j-\bar{z}_l)
			\\
			\Psi(\bar{z}_j-z_l) & \Psi(\bar{z}_j-\bar{z}_l)
		\end{pmatrix} \right]_{j,l=1}^k.
\end{align*}
Therefore it is clear that the horizontal translation invariance of the $k$-point function
$R_k$ holds along the real axis, i.e., for $t \in \R$,
\begin{gather*}
R_k(z_1+t,\dots,z_k+t)=R_k(z_1,\dots,z_k).
\end{gather*}

We write $\Psi$ as Fourier's inversion form
\begin{gather} \label{Psi J hat}
	\Psi(z)=\frac{1}{2\pi} \int_{\R} {\rm e}^{{\rm i}zu} \wh{J}(u)\,{\rm d}u
\end{gather}
for some odd function $\wh{J}.$ Here and in the sequel, let us assume that $\wh{J} \in L^1 \cap L^2$.

Ward's equation of the form \eqref{Ward} will be used to show our final result valid for general potentials.
To be more precise, we have discussed that for a general $C^2$-smooth potential $Q$ and a regular point $p$ in the real bulk, the limiting point process should (at least intuitively) satisfy the mass-one equation~\eqref{mass-one} and Ward's equation~\eqref{Ward}.
The steps that lack in the proof are existence of the limiting correlation kernel, and the one of taking the limit \mbox{$N \to \infty$} of~\eqref{mass-one N} and~\eqref{Ward rescale N}.
These would require a separate analysis, see~\cite{MR3975882} for the random normal matrix ensemble.
As these are beyond the scope of this paper, we so far are able to characterise the translation-invariant solution of the limiting mass-one equation~\eqref{mass-one} and Ward's equation~\eqref{Ward}.

\begin{Theorem}\label{Thm_TIsol}
	A pre-kernel \eqref{kappa Psi} leading to translation invariance satisfies the mass-one equation \eqref{mass-one} and Ward's equation \eqref{Ward} if and only if
	\begin{gather} \label{kappa t.i.}
		\kappa(z,w)= \frac{1}{\sqrt{\pi}} {\rm e}^{ z^2+w^2 } \int_E {\rm e}^{-u^2} \sin(2u(z-w) ) \frac{{\rm d}u}{u},
	\end{gather}
	where $E=(-a,a)$ for some $a >0$.
\end{Theorem}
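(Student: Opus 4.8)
The plan is to convert both the mass-one equation \eqref{mass-one} and Ward's equation \eqref{Ward} into conditions on the Fourier transform $\wh{J}$ of the odd function $\Psi$ appearing in \eqref{kappa Psi}, and then solve the resulting system. First I would substitute the translation-invariant ansatz \eqref{kappa Psi} into the equivalent form \eqref{mass1 var} of the mass-one equation. Writing $\Psi$ via \eqref{Psi J hat}, the right-hand side of \eqref{mass1 var} becomes a Gaussian integral over $\C$ against $|\kappa(z,w)|^2$; carrying out the $w$-integration (the $\re w$ and $\im w$ parts factor, one producing a Gaussian convolution and the other, because of the $(\bar w - w)$ weight, producing a derivative of a Gaussian) should collapse the double Fourier integral to a single one. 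Matching against the left-hand side $\kappa(z,\bar z) = {\rm e}^{2(\re z)^2 - \text{something}}\Psi(z - \bar z)$ — note $z - \bar z = 2{\rm i}\,\im z$ is purely imaginary, so $\Psi(2{\rm i}\,\im z)$ is real — I expect to obtain a pointwise functional equation for $\wh{J}$, something of the shape $\wh{J}(u) = (\text{Gaussian in }u)\cdot(\wh{J}\text{ convolved with itself or its derivative})$. The key payoff of this step is that it should force $\wh{J}$ to have the form $\wh{J}(u) = \mathbbm{1}_{E}(u)\cdot g(u)/u$ for a connected symmetric set $E$ and an explicit Gaussian-type $g$, essentially because the convolution identity is rigid.

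Second, I would feed the same ansatz into Ward's equation \eqref{Ward}. Because of translation invariance, $R(z)$, $C(z)$ and the Berezin kernel $B(z,w)$ all depend only on $\im z$ (and $\im w$), so \eqref{Ward} reduces to an ODE in the single real variable $y = \im z$; the anomalous terms $-\tfrac12\Delta\log R$ and $(z-\bar z)^{-2} = -(2y)^{-2}$ become explicit functions of $y$. Expressing $R(z)$, $R_2(z,w)$ via the Pfaffian structure with pre-kernel \eqref{kappa Psi} and then via \eqref{Psi J hat}, the left side $\bp C(z)$ and the right side both become integral expressions in $\wh{J}$; matching Fourier modes turns \eqref{Ward} into a second (differential or algebraic) constraint on $\wh{J}$. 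Combining the two constraints from mass-one and Ward should pin down $\wh{J}(u) = \mathbbm{1}_{(-a,a)}(u)\,{\rm e}^{-u^2/\text{const}}/u$ up to the single free parameter $a$; undoing the Fourier transform \eqref{Psi J hat} and rewriting $\Psi$ back into $\kappa$ via \eqref{kappa Psi} then yields exactly \eqref{kappa t.i.}, after absorbing constants by rescaling $u$. The "only if" direction is this derivation; the "if" direction is the (routine) verification that \eqref{kappa t.i.} satisfies both equations, which can also be read off from Proposition~\ref{Thm_mass1} with $E$ symmetric plus a direct check of \eqref{Ward}.

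The main obstacle I anticipate is the Ward's-equation computation: unlike the random normal matrix case in \cite{MR4030288}, the $2\times 2$ matrix (Pfaffian) structure means $R_2(z,w)$ is a $2\times 2$ Pfaffian built from $\Psi$ evaluated at the four combinations $\pm z \pm w$ (and conjugates), so expanding $B(z,w)$ and then integrating $B(z,w)/(z-w)$ over $\C$ to get $C(z)$, and finally applying $\bp$, is genuinely more involved than the scalar analogue; keeping track of which terms are holomorphic (and hence killed or simplified by $\bp$) versus which contribute is the delicate bookkeeping. A secondary subtlety is justifying the interchange of $\bp$ with the (only conditionally convergent) principal-value integral defining $C(z)$, and the interchanges of the $\C$-integration with the Fourier integrals; these can be handled using the standing assumption $\wh{J}\in L^1\cap L^2$ together with the Gaussian decay that the mass-one step already imposes, so I would carry out the mass-one analysis first and use its output to legitimise the manipulations in the Ward step. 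Finally, one must argue that the support $E$ of $\wh J$ is an \emph{interval} symmetric about $0$ rather than a more general symmetric set: this should follow from the convolution/multiplicative structure of the functional equation (a product-type identity forces the support to be closed under a semigroup operation, hence an interval), mirroring the corresponding argument in \cite{MR4030288}.
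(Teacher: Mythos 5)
Your overall route --- pass to Fourier space via \eqref{Psi J hat}, turn the mass-one equation \eqref{mass1 var} into a functional equation for $\wh J$, and then use Ward's equation \eqref{Ward} as a second constraint --- is exactly the one the paper follows (Proposition~\ref{Prop_mass1 TIsol} followed by Lemmas~\ref{Lem_Ward var1}--\ref{Lem_Ward var3}). The mass-one step works as you predict: after integrating out $w$ one arrives at the pointwise identity $\wh J(u)\bigl(1+\tfrac{u\,{\rm e}^{u^2/4}}{2{\rm i}\sqrt{\pi}}\wh J(u)\bigr)=0$ a.e., whence $\wh J(u)=\tfrac{2\sqrt{\pi}}{{\rm i}}\tfrac{{\rm e}^{-u^2/4}}{u}\mathbbm{1}_{E}(u/2)$ for some Borel set $E$, symmetric because $\wh J$ is odd.

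The genuine gap is in your last paragraph: the claim that the interval structure of $E$ ``follows from the convolution/multiplicative structure of the functional equation (a product-type identity forces the support to be closed under a semigroup operation, hence an interval)''. The mass-one identity you will actually obtain is a \emph{pointwise} quadratic equation in $\wh J(u)$: at each $u$ it says $\wh J(u)=0$ or $\wh J(u)$ equals the explicit Gaussian profile. It contains no convolution and imposes no constraint on the support beyond symmetry, so \emph{every} symmetric Borel set $E$ solves mass-one. The connectedness of $E$ is carried entirely by Ward's equation, and extracting it is the substantive part of the proof: one must first reduce Ward's equation to the condition $L_2(t)=-\Psi'(2{\rm i}t)-\bigl(\tfrac{{\rm i}}{2t}+c\bigr)\Psi(2{\rm i}t)$ for a constant $c$ (Lemma~\ref{Lem_Ward var3}, which itself rests on the nontrivial cancellation ${\rm d}/{\rm d}t\,[L_1/\Psi(2{\rm i}t)]=-8t{\rm e}^{-4t^2}\Psi(2{\rm i}t)$), and then compute $L_2$ for a general symmetric $E$. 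Ward's equation then holds iff $\int_{v>-u}{\rm e}^{-2(u+v)t}\mathbbm{1}_E(v/2)\,{\rm d}v+\tfrac{1}{2t}$ and $\int_{u<-v}\mathbbm{1}_E(u/2)\,{\rm d}u+v$ are a.e.\ constant on $E$, which forces $E$ to be an interval; for instance, for $E$ a union of two symmetric intervals an extra $\sinh(2\beta t)/t$-type term survives and Ward's equation fails. As written, your plan establishes the ``if'' direction and the profile of $\wh J$ on its support, but does not rule out disconnected $E$; you need this additional Ward-side computation, which is also where the analogous argument in \cite{MR4030288} lives.
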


The overall proof of this theorem is parallel to that of \cite[Theorem~4]{MR4030288}.

\begin{Remark} \noindent
	\begin{enumerate}\itemsep=0pt
\item[(i)] Note that if $a=\infty$, the kernel \eqref{kappa t.i.} corresponds to the symplectic Ginibre kernel in the bulk along the real line \eqref{kappa bulk}.
		On the other hand, if $a<\infty$ is fixed, then it corresponds to the kernel in the almost-Hermitian limit at the origin \cite{MR1928853} after an appropriate rescaling of the eigenvalues.
		(This result has been extended to the entire bulk along the real line in~\cite{byun2021wronskian}.)
		In particular, note that for the density we have
		\begin{gather} \label{R AH}
		R(x+{\rm i}y)=\frac{2}{\sqrt{\pi} {\rm i}} y {\rm e}^{-4y^2} \int_E {\rm e}^{-u^2} \sin(4{\rm i}y u) \frac{{\rm d}u}{u},
		\end{gather}
		see Figure~\ref{Fig. RAH}.
		For random normal matrix ensembles in the almost-Hermitian regime, a way to characterise the precise interval $E$ was presented in a recent work~\cite{AB21}. 	
\item[(ii)] 	Similar to random normal matrix ensembles discussed in \cite{MR3975882,MR4030288}, the translation-invariant scaling limit~\eqref{Psi} enjoys a \emph{Gaussian convolution structure}.
		To be precise, let us write $\gamma(z):=\frac{1}{\sqrt{\pi}} {\rm e}^{-z^2} $ for the Gaussian kernel and define
		\begin{gather*}
			\gamma * \varphi(z):=\int_\R \varphi(t) \gamma(z-t)\,{\rm d}t,
		\end{gather*}
		where $\varphi$ is a suitable tempered distribution on $\R$.
		Then we have
		$\Psi(z)=	\gamma * \varphi(z),$ where~$\varphi$ satisfies
		\begin{gather*}
			\wh{\varphi}(u):=\int_{\R} \varphi(x) {\rm e}^{-{\rm i}u x}\,{\rm d}x =\frac{2\sqrt{\pi}}{{\rm i}} \frac{1}{u}\cdot \mathbbm{1}_{E}(u/2).
		\end{gather*}
	\end{enumerate}
\end{Remark}

\begin{figure}[h!]
	\begin{subfigure}{0.32\textwidth}
		\begin{center}	
			\includegraphics[width=\textwidth]{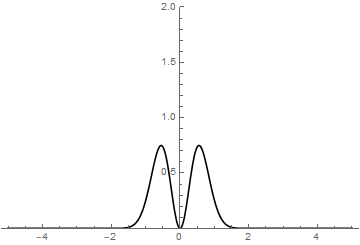}
		\end{center}
		\subcaption{$a=1$}
	\end{subfigure}	
	\begin{subfigure}{0.32\textwidth}
		\begin{center}	
			\includegraphics[width=\textwidth]{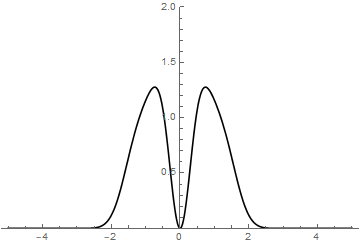}
		\end{center}
		\subcaption{$a=3$}
	\end{subfigure}	
	\begin{subfigure}{0.32\textwidth}
		\begin{center}	
			\includegraphics[width=\textwidth]{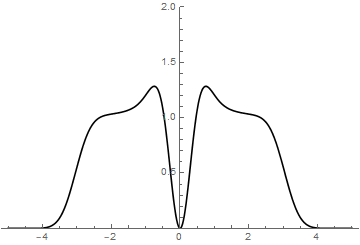}
		\end{center}
		\subcaption{$a=6$}
	\end{subfigure}
	
	\caption{ The plots display graphs of $R(x+{\rm i}y)$ given in \eqref{R AH} for a few values of $a$. The graphs shown are against the $y$-variable as they are invariant under translation in $x$-direction.		
They can be seen to approach Figure~\ref{Fig. RN_Gaussian}(a) and (c) for increasing values of~$a$.} \label{Fig. RAH}
\end{figure}

\section{Scaling limits of the Ginibre ensemble}

In the first part of this section, we review the canonical representation of the correlation kernel in terms of skew-orthogonal polynomials. In the second part, we prove Theorem~\ref{Thm_edge kernel} and Corollary~\ref{Cor_Pf to det}.

\subsection{Skew-orthogonal polynomials}
For a given potential $Q$, the \textit{anti-symmetric scalar product} $\langle \cdot | \cdot \rangle$ is given by
\begin{gather*}
	\langle h|g \rangle:= \int_\C \big( \bar{\zeta}-\zeta \big) {\rm e}^{-N Q(\zeta) } \big( h(\zeta) \overline{g(\zeta)}-\overline{h(\zeta)}g(\zeta) \big) \, {\rm d}A(\zeta).
\end{gather*}
By definition, a family $\{ q_k \}_{k=0}^\infty$ of polynomials is called
\textit{skew-orthogonal polynomials} associated with $Q$ if it satisfies
\begin{gather*}
	\langle q_{2k+1} | q_{2l} \rangle=-\langle q_{2l} | q_{2k+1} \rangle =s_k \, \delta_{kl}, \qquad \langle q_{2k+1} | q_{2l+1} \rangle=\langle q_{2l} | q_{2k} \rangle=0,
\end{gather*}
where $\delta_{kl}$ is the Kronecker delta and $s_k$ is a positive number that depends only on $k$.

For a radially symmetric potential $Q$, let
\begin{gather*}
	h_j:=\int_\C |\zeta|^{2j} {\rm e}^{-N Q(\zeta)} \, {\rm d}A(\zeta).
\end{gather*}
Then by \cite[Theorem 3.1]{akemann2021skew} (see also \cite[p.~7]{MR3066113})
\begin{gather} \label{skew op_rad}
	q_{2k+1}(\zeta)=\zeta^{2k+1}, \qquad q_{2k}(\zeta)=\zeta^{2k}+\sum_{l=0}^{k-1} \zeta^{2l} \prod_{j=0}^{k-l-1} \frac{h_{2l+2j+2} }{ h_{2l+2j+1} }
\end{gather}
form a family of skew-orthogonal polynomials associated with the potential $Q$. Here we have $s_k=2h_{2k+1}.$

Recall that the particle system \eqref{Gibbs} forms a Pfaffian point process, namely, the $k$-point correlation function
\begin{gather} \label{bfRNk def}
	\bfR_{N,k}(\zeta_1,\dots,\zeta_k):=\frac{1}{Z_N} \frac{N!}{(N-k)!} \int_{ \C^{N-k} } {\rm e}^{- \bfH_N( \boldsymbol{\zeta} ) } \prod_{j=k+1}^N {\rm d}A(\zeta_j)
\end{gather}
is expressed as
\begin{gather} \label{bfRNk Pf}
	\bfR_{N,k}(\zeta_1,\dots, \zeta_k)= \prod_{j=1}^{k} \big(\bar{\zeta}_j-\zeta_j\big) \Pf \big[ \bfK_{N}(\zeta_j,\zeta_l) \big]_{ j,l=1 }^k,
\end{gather}
where the $2 \times 2$ matrix-valued kernel $\bfK_N$ is of the form
\begin{gather} \label{bfK Pf}
	\bfK_N(\zeta,\eta)={\rm e}^{ -NQ(\zeta)/2-NQ(\eta)/2 }
	\begin{pmatrix}
		\bfkappa_N(\zeta,\eta) & \bfkappa_N(\zeta,\bar{\eta}) \\
		\bfkappa_N(\bar{\zeta},\eta) & \bfkappa_N(\bar{\zeta},\bar{\eta})
	\end{pmatrix}.
\end{gather}
In particular, we write $\bfR_{N} \equiv \bfR_{N,1}$ for the $1$-point function.
It is well known that the pre-kernel~$\bfkappa_N$ takes the form
\begin{gather} \label{bfkappaN}
	\bfkappa_N(\zeta,\eta)=\sum_{k=0}^{N-1} \frac{q_{2k+1}(\zeta) q_{2k}(\eta) -q_{2k}(\zeta) q_{2k+1}(\eta)}{s_k},
\end{gather}
see \cite{MR1928853}, where the quaternionic determinant $Q\det$ is used instead of the Pfaffian $\Pf$. We also refer to \cite{Mehta} for the relation between $Q\det$ and $\Pf$.

The relation between the pre-kernels $\bfkappa_N$ and $\kappa_N$
for the change of variables \eqref{zeta eta}
at point $p\in S$ is given as
\begin{gather} \label{kappa_N}
	\kappa_N(z,w):=r_N^3
	\begin{cases}
		\bfkappa_N( p+r_Nz, p+r_Nw) &\text{if } p \in \textup{int}(S),
		\\
	\bfkappa_N\big( p+r_Nz {\rm e}^{{\rm i}\theta}, p+r_Nw {\rm e}^{{\rm i}\theta} \big)&\text{if } p \in \partial S.
	\end{cases} 	
\end{gather}
Thus the rescaled process $\boldsymbol{z}$ retains its Pfaffian structure in terms of the rescaled matrix-valued kernel
\begin{gather} \label{K_N}
	K_N(z,w):={\rm e}^{ -\frac{N}{2} (Q( \zeta )+Q( \eta )) } \begin{pmatrix}
		\kappa_N(z,w) & \kappa_N(z,\bar{w}) \\
		\kappa_N(\bar{z},w) & \kappa_N(\bar{z},\bar{w})
	\end{pmatrix},
\end{gather}
cf.~\eqref{zeta eta} for the unscaled variables $\zeta$ and $\eta$ inside the arguments of the potential~$Q$. We remark that in the factor $r_N^3$ in~\eqref{kappa_N}, one factor $r_N$ comes from the term $\prod_{j=1}^k \big(\bar{\zeta}_j-\zeta_j\big)$ in~\eqref{bfRNk Pf} (see~\eqref{RNk Pf}), whereas the other two factors originate from the change of variables for the pre-kernel. This leads precisely to~\eqref{RNk Pf} as follows
	\begin{gather*}
		 R_{N,k}(z_1,\dots, z_k)
		 = 	r_N^{2k} \prod_{j=1}^{k} \big(\bar{\zeta}_j-\zeta_j\big) \Pf \big[ \bfK_{N}(\zeta_j,\zeta_l) \big]_{ j,l=1 }^k
		= \prod_{j=1}^{k} \frac{\bar{\zeta}_j-\zeta_j}{r_N} \Pf \big[ K_{N}(z_j,z_l) \big]_{ j,l=1 }^k.
	\end{gather*}

\subsection{Boundary Ginibre point processes}\label{Subsec_bdy Ginibre}
For the Gaussian case $Q(\zeta)=|\zeta|^2$, by \eqref{skew op_rad}, we have
\begin{gather*}
	q_{2k+1}(\zeta)=\zeta^{2k+1}, \qquad q_{2k}(\zeta)=\frac{(2k)!!}{N^k} \sum_{l=0}^k \frac{N^l}{(2l)!!} \zeta^{2l}, \qquad s_k=2\frac{(2k+1)!}{N^{2k+2}}.
\end{gather*}
Therefore it follows from \eqref{bfkappaN} that the associated kernel $\bfkappa_N$ has an expression
\begin{gather}\label{bfkappa_N Gaussian}
	\bfkappa_N(\zeta,\eta):=\boldsymbol{G}_N(\zeta,\eta)-\boldsymbol{G}_N(\eta,\zeta),
\end{gather}
where
\begin{gather*}
\boldsymbol{G}_N(\zeta,\eta):=\frac{N\sqrt{N}}{2} \sum_{k=0}^{N-1} \frac{ \big(\sqrt{N}\zeta\big)^{2k+1}}{(2k+1)!!} \sum_{l=0}^k \frac{ (\sqrt{N}\eta) ^{2l}}{(2l)!!}.
\end{gather*}
This recovers the expression obtained in Mehta's book \cite{Mehta} in a different approach, compared to the skew-orthogonal polynomials we use here from~\cite{MR1928853}.

By definition, a function $c(z,w)$ is called a \textit{cocycle} if there exists a (continuous) unimodular function $h$ (i.e., $h(z)=h(1/\bar{z})$) such that $c(z,w) = h(z)h(w).$ Since cocycles cancel out when multiplying the pre-kernel and taking the Pfaffian, if $(c_N)_{N=1}^\infty$ is a sequence of cocycles, then~$c_N\kappa_N$ is an equivalent realisation of the pre-kernel~$\kappa_N$
for the same point process $\boldsymbol{z}$. Let us denote by~$c_N \cdot K_N$ the correlation kernel associated with $c_N \kappa_N$, i.e.,
\begin{gather*}
c_N \cdot K_N(z,w):= {\rm e}^{ -\frac{N}{2} (Q( \zeta )+Q( \eta )) } \begin{pmatrix}
	 c_N(z,w)\, \kappa_N(z,w) & c_N(z,\bar{w})\, \kappa_N(z,\bar{w}) \\
	c_N(\bar{z},w)\, \kappa_N(\bar{z},w) & c_N(\bar{z},\bar{w}) \, \kappa_N(\bar{z},\bar{w})
	\end{pmatrix}.
\end{gather*}

Now we prove Theorem~\ref{Thm_edge kernel}.

\begin{proof}[Proof of Theorem~\ref{Thm_edge kernel}]
	We prove the theorem for $p =\sqrt{2}$ only. The other case $p=-\sqrt{2}$ can be proved in the same way.
	Note that for $Q=|\zeta|^2,$ we have $r_N=\sqrt{2/N}$, see~\eqref{micro-scale}.
	
	By \eqref{K_N}, we have
	\begin{gather*}
		K_N(z,w)={\rm e}^{ - |\sqrt{N}+z|^2-|\sqrt{N}+w|^2 } \begin{pmatrix}
			\kappa_N(z,w) & \kappa_N(z,\bar{w}) \\
			\kappa_N(\bar{z},w) & \kappa_N(\bar{z},\bar{w})
		\end{pmatrix}.
	\end{gather*}
	Here it follows from \eqref{kappa_N} and \eqref{bfkappa_N Gaussian} that
	\begin{gather*}
	\kappa_N(z,w)=G_N(z,w)-G_N(w,z),
	\end{gather*}
	where
	\begin{gather} \label{kappa tilde}
	G_N(z,w)=\sqrt{2} \sum_{k=0}^{N-1} \frac{\big( \sqrt{2N}+\sqrt{2}z \big)^{2k+1}}{(2k+1)!!} \sum_{l=0}^k \frac{\big( \sqrt{2N}+\sqrt{2} w\big)^{2l} }{(2l)!!}.
	\end{gather}
	
	Let us write
	\begin{gather} \label{kappa hat, tilde}
		\widehat{\kappa}_N(z,w):= {\rm e}^{ -2( \sqrt{N}+ z)( \sqrt{N}+ w) } \kappa_N(z,w).
	\end{gather}
	Then we have
	\begin{gather*}
		K_N(z,w)={\rm e}^{ - |z|^2-|w|^2 } \begin{pmatrix}
			c_N(z,w)\widehat{\kappa}_N(z,w) {\rm e}^{2zw} & c_N(z,\bar{w}) \widehat{\kappa}_N(z,\bar{w}) {\rm e}^{2z\bar{w}} \\
			c_N(\bar{z},w)\widehat{\kappa}_N(\bar{z},w) {\rm e}^{2\bar{z}w} & c_N(\bar{z},\bar{w}) \widehat{\kappa}_N(\bar{z},\bar{w}) {\rm e}^{2\bar{z}\bar{w}}
		\end{pmatrix},
	\end{gather*}
	where the cocycle $c_N$ is given by
	$c_N(z,w):={\rm e}^{ 2{\rm i}\sqrt{N}\im(z+w) }$.
	
	We claim that $\widehat{\kappa}:=\lim\limits_{N \to \infty} \widehat{\kappa}_N$ satisfies the differential equation
	\begin{gather} \label{kappa hat pde 1 edge}
		\partial_z \widehat{\kappa}(z,w)=2(z-w) \widehat{\kappa}(z,w)+\erfc(z+w)-\frac{ {\rm e}^{ (z-w)^2-2z^2 } }{\sqrt{2}} \erfc\big(\sqrt{2}w\big).
	\end{gather}
	Here the uniform convergence of $\widehat{\kappa}_N$ on compact subsets of $\C$ follows from the Weierstrass $M$-test, see \cite[Section 4]{akemann2021skew} for similar computations.
	
	Differentiating \eqref{kappa tilde}, we have
	\begin{gather*}
		\partial_z G_N(z,w)= 2 \sum_{k=1}^{N-1} \frac{\big( \sqrt{2N}+ \sqrt{2}z \big)^{2k}}{(2k-1)!!} \sum_{l=0}^{k-1} \frac{\big( \sqrt{2N}+\sqrt{2} w\big)^{2l} }{(2l)!!}\\
\hphantom{\partial_z G_N(z,w)=}{}
+ 2 \sum_{k=0}^{N-1} \frac{\big( \sqrt{2N}+ \sqrt{2}z \big)^{2k}}{(2k-1)!!} \frac{\big( \sqrt{2N}+ \sqrt{2} w\big)^{2k} }{(2k)!!}.
	\end{gather*}
	Rearranging the terms, we rewrite this equation as
	\begin{gather*}
 \partial_z G_N(z,w)
		= 2 \big(\sqrt{2N}+\sqrt{2}z\big) \sum_{k=0}^{N-1} \frac{\big( \sqrt{2N}+ \sqrt{2}z \big)^{2k+1}}{(2k+1)!!} \sum_{l=0}^{k} \frac{\big( \sqrt{2N}+\sqrt{2} w\big)^{2l} }{(2l)!!}
		\\
{}+ 2 \sum_{k=0}^{N-1} \frac{\big( \sqrt{2N}+ \sqrt{2}z \big)^{2k}}{(2k-1)!!} \frac{\big( \sqrt{2N}+ \sqrt{2} w\big)^{2k} }{(2k)!!}
		-2 \frac{\big( \sqrt{2N}+ \sqrt{2}z \big)^{2N}}{(2N-1)!!} \sum_{l=0}^{N-1} \frac{\big( \sqrt{2N}+\sqrt{2} w\big)^{2l} }{(2l)!!}.
	\end{gather*}
	Similarly,
	\begin{gather*}
		\partial_z G_N(w,z)= 2 \big( \sqrt{2N}+\sqrt{2} z\big) \sum_{k=0}^{N-1} \frac{\big( \sqrt{2N}+\sqrt{2} w\big)^{2k+1}}{(2k+1)!!} \sum_{l=0}^{k} \frac{\big( \sqrt{2N}+\sqrt{2} z\big)^{2l} }{(2l)!!}
		\\
\hphantom{\partial_z G_N(w,z)=}{}
- 2 \big( \sqrt{2N}+\sqrt{2} z\big) \sum_{k=0}^{N-1} \frac{\big( \sqrt{2N}+\sqrt{2} w\big)^{2k+1}}{(2k+1)!!} \frac{\big( \sqrt{2N}+\sqrt{2} z\big)^{2k} }{(2k)!!}.
	\end{gather*}
	
Now let us recall the definition of the (regularised) incomplete Gamma function \cite[Chapter~8]{olver2010nist}
\begin{gather*}
Q(n,z)= \sum_{k=0}^{n-1} \frac{z^k {\rm e}^{-z}}{k!}, \qquad n=1,2,\dots.
\end{gather*}
	Combining all of the above equations with \eqref{kappa hat, tilde}, we obtain
\begin{gather}
\partial_z \widehat{\kappa}_N(z,w)= 2(z-w) \widehat{\kappa}_N(z,w) +2 Q(2N,\lambda)\nonumber\\
\hphantom{\partial_z \widehat{\kappa}_N(z,w)=}{}
-2 {\rm e}^{ (z-w)^2 } {\rm e}^{ -( \sqrt{N}+ z)^2 } \frac{\big( \sqrt{2N}+ \sqrt{2}z \big)^{2N}}{(2N-1)!!} Q\big(N,\widetilde{\lambda}\big),\label{kappa hat pde 1-0}
\end{gather}
	where
\begin{gather*}
		\lambda:=2\big( \sqrt{N}+ z \big) \big( \sqrt{N}+ w\big),\qquad \widetilde{\lambda}:=\big( \sqrt{N}+ w\big)^2.
\end{gather*}
	
We now derive \eqref{kappa hat pde 1 edge} from \eqref{kappa hat pde 1-0}. By combining Stirling's formula with the elementary asymptotic
	${\rm e}^{ -2\sqrt{N} z } \big( 1+\tfrac{z}{\sqrt{N}} \big)^{2N} = {\rm e}^{ -z^2 } (1+o(1))$,
	we have
	\begin{gather} \label{kappa hat pde 1-1}
		{\rm e}^{ -( \sqrt{N}+ z)^2 } \frac{\big( \sqrt{2N}+ \sqrt{2}z \big)^{2N}}{(2N-1)!!}
		= \frac{{\rm e}^{-2z^2}}{\sqrt{2}} (1+o(1)),
	\end{gather}
 where the $o(1)$-terms are uniform on compact subsets of~$\C$.

	On the other hand, the incomplete Gamma function satisfies the asymptotic
	\begin{gather}\label{Q asym}
		Q\big(a,a+\sqrt{2a}z\big) = \tfrac12 \erfc(z) (1+o(1)), \qquad a\to \infty,
	\end{gather}
	see \cite[equation~(8.11.10)]{olver2010nist}. Substituting \eqref{kappa hat pde 1-1}, \eqref{Q asym} to the equation~\eqref{kappa hat pde 1-0}, and taking the limit as $N \to \infty$, we obtain~\eqref{kappa hat pde 1 edge}.
	
	Let us define
	\begin{gather*}
	f(z,w):=\frac{ 1 }{\sqrt{2}} \int_{-\infty}^0 {\rm e}^{-2(z-u)^2 } \erfc\big( \sqrt{2}(w-u) \big)- {\rm e}^{-2(w-u)^2 } \erfc\big( \sqrt{2}(z-u) \big) \, {\rm d}u.
	\end{gather*}
	Then integration by parts gives that
	\begin{gather*}
		\partial_z f(z,w)=\frac{4}{\sqrt{\pi}}\int_{-\infty}^0 {\rm e}^{-2(z-u)^2-2(w-u)^2}\,{\rm d}u-\frac{1}{\sqrt{2}}\big[ {\rm e}^{-2(z-u)^2} \erfc\big(\sqrt{2}(w-u)\big)\big]_{-\infty}^0,
	\end{gather*}
	which leads to
	\begin{align*}
		\begin{split}
			\partial_z f(z,w)=
			{\rm e}^{-(z-w)^2} \erfc(z+w)-\frac{ {\rm e}^{-2z^2 } }{ \sqrt{2} }\erfc\big(\sqrt{2}w\big).
		\end{split}
	\end{align*}
	Therefore $\widehat{\kappa}(z,w):={\rm e}^{(z-w)^2}f(z,w)$ is an anti-symmetric solution to \eqref{kappa hat pde 1 edge}.
	Since \eqref{kappa hat pde 1 edge} is a~first-order differential equation, the anti-symmetry plays the role of the initial value, which determines the solution uniquely.
	This completes the proof.
\end{proof}

We now prove Corollary~\ref{Cor_Pf to det}.

\begin{proof}[Proof of Corollary~\ref{Cor_Pf to det}]
	By \eqref{rescaling}, for $p_N={\rm e}^{i\theta_N}$ with $\theta_N=\frac{t}{\sqrt{N}}$, we have
	\begin{gather*}
		\zeta_j=\sqrt{2}+\frac{\sqrt{2}(z_j+{\rm i}t)}{\sqrt{N}}+O\left(\frac1N\right), \qquad 	\frac{\bar{\zeta}_j-\zeta_j}{r_N}	= \bar{z}_j-z_j-2{\rm i}t+O\left(\frac{1}{\sqrt{N}}\right).
	\end{gather*}
	Therefore the first assertion \eqref{R_k^t Pf} follows along the same lines of the proof of Theorem~\ref{Thm_edge kernel} by replacing $(z,w) \mapsto (z+{\rm i}t,w+{\rm i}t)$.
	
	We now prove \eqref{Pf to det}. By \eqref{R_k^t Pf}, we have
\begin{gather} \label{R_k^t Pf t asym}
		R_k^t(z_1,\dots, z_k) \sim \Pf \big[ {-}2{\rm i}tK_{{\rm edge}}^t(z,w) \big]_{ j,l=1 }^k
\end{gather}
	as $t \to \infty$.
 Here $f(x)\sim g(x)$ means that $f(x)/g(x)$ tends to $1$ as $x\to \infty$.
	Using the following asymptotic of the error function (see, e.g., \cite[equation~(7.12.1)]{olver2010nist})
	\begin{gather*}
	\erfc(z) \sim \frac{{\rm e}^{-z^2}}{\sqrt{\pi}\,z}, \qquad z\to\infty,
	\end{gather*}
	we have that as $t \to \infty,$
	\begin{align*}
		{\rm e}^{-|z+{\rm i}t|^2-|w+{\rm i}t|^2} \kappa_{{\rm edge}}^\R (z+{\rm i}t,\bar{w}-{\rm i}t)
		& \sim \frac{{\rm i}}{\sqrt{\pi}t} {\rm e}^{-(z+{\rm i}t)(z+\bar{z})-(\bar{w}-{\rm i}t)(w+\bar{w})} \int_{-\infty}^0 {\rm e}^{ -4u^2+4u(z+\bar{w}) } \, {\rm d}u
		\\
		&= \frac{{\rm i}}{4t} {\rm e}^{ -|z|^2-|w|^2+2z\bar{w} } \erfc(z+\bar{w}) {\rm e}^{ -(z+\bar{z}){\rm i}t+(w+\bar{w}){\rm i}t }.
	\end{align*}
	Therefore we obtain that as $t \to \infty$,
	\begin{gather} \label{kappa t lim 1}
		{\rm e}^{-|z+{\rm i}t|^2-|w+{\rm i}t|^2} \kappa_{{\rm edge}}^\R (z+{\rm i}t,\bar{w}-{\rm i}t) \sim \frac{{\rm i}}{2t}\widetilde{K}(z,w),
	\end{gather}
	where
	\begin{gather} \label{K tilde K edge C}
		\widetilde{K}(z,w):=c(z,w)\cdot K_{{\rm edge}}^\C(z,w) .
	\end{gather}
	Here $c(z,w)={\rm e}^{ -(z+\bar{z}){\rm i}t+(w+\bar{w}){\rm i}t }$ is a cocycle for the determinantal point process.
	On the other hand, it follows from similar computations that as $t \to \infty,$
	\begin{gather} \label{kappa t lim 2}
		{\rm e}^{-|z+{\rm i}t|^2-|w+{\rm i}t|^2} \kappa_{{\rm edge}}^\R (z+{\rm i}t,w+{\rm i}t)=O\left(\frac1{t^2}\right).
	\end{gather}
	
	Combining \eqref{R_k^t Pf t asym}, \eqref{kappa t lim 1} and \eqref{kappa t lim 2}, we have
	\begin{gather*}
		R_{k}^t(z_1,\dots,z_k) \sim \Pf
	\begin{pmatrix}
		0 & \widetilde{K}(z_1,z_1) & 0 & \widetilde{K}(z_1,z_2) & \dots
		\\
		-\widetilde{K}(z_1,z_1) &0 & -\widetilde{K}(z_2,z_1) & 0 & \dots
		\\
		0 & \widetilde{K}(z_2,z_1) & 0 & \widetilde{K}(z_2,z_2) & \dots
		\\
		-\widetilde{K}(z_1,z_2) &0 & -\widetilde{K}(z_2,z_2) & 0 & \dots
		\\
		\vdots & \vdots & \vdots & \vdots &\ddots
	\end{pmatrix}_{2k \times 2k}
	\end{gather*}
	as $t \to \infty.$ The remaining argument is similar to that used in \cite{akemann2019universal}. By a proper reordering, we have
	\begin{gather*}
		R_{k}^t(z_1,\dots,z_k) \sim (-1)^{\frac{k(k-1)}{2}} \Pf
	\begin{pmatrix}
		0 & M
		\\
		-M^{\rm T} & 0
	\end{pmatrix}, \qquad M=\big( \widetilde{K}(z_j,z_l) \big)_{j,l=1}^k.
	\end{gather*}
	Now it follows from the identity
	\begin{gather*}
	\Pf\begin{pmatrix}
		0 & M
		\\
		-M^{\rm T} & 0
	\end{pmatrix}=(-1)^{ \frac{k(k-1)}{2}} \det(M)
	\end{gather*}
	and \eqref{K tilde K edge C} that
	\begin{gather*}
		R_{k}^t(z_1,\dots,z_k) \sim \det \big[ \widetilde{K}(z_j,z_l) \big]_{j,l=1}^k= \det \big[ K_{{\rm edge}}^\C(z_j,z_l) \big]_{j,l=1}^k.
	\end{gather*}
	This completes the proof.
\end{proof}

\section{Scaling limit of the Mittag-Leffler ensemble}

In this section, we construct a fractional differential equation satisfied by the local kernel of the Mittag-Leffler ensemble with potentials $Q(\zeta)=|\zeta|^{2\lambda}-(2c/N)\log|\zeta|$, $\lambda>0$ and $c>-1$ (Proposition~\ref{Thm_bulkS}). As a consequence, we prove Theorem~\ref{Thm_ML integer}.

\subsection{Christoffel--Darboux type formula}
For the Mittag-Leffler potentials \eqref{Q ML}, the orthogonal norm $h_k$ is given by
\begin{gather*}
h_k=\int_\C |\zeta|^{2k} {\rm e}^{-N Q(\zeta)} \, {\rm d}A(\zeta)=\frac{\Gamma\big( \frac{k+c+1}{\lambda} \big)}{\lambda N^{\frac{k+c+1}{\lambda}} } .
\end{gather*}
Then by \eqref{skew op_rad}, we have
\begin{gather} \label{SOP ML}
	q_{2k+1}(\zeta)=\zeta^{2k+1}, \qquad q_{2k}(\zeta)=\zeta^{2k}+ \sum_{l=0}^{k-1} \frac{\zeta^{2l}}{N^{\frac{k-l}{\lambda}}} \prod_{j=0}^{k-l-1} \frac{ \Gamma\big( \frac{2l+2j+3+c}{\lambda} \big) }{ \Gamma\big( \frac{2l+2j+2+c}{\lambda} \big) } ,
\end{gather}
and
\begin{gather} \label{sk ML}
	s_k=2h_{2k+1}=\frac{2}{\lambda } \frac{\Gamma( \frac{2k+c+2}{\lambda} )}{N^{\frac{2k+c+2}{\lambda}} } .
\end{gather}
To describe the Christoffel--Darboux type formula, let us recall that the Caputo fractional derivative $D^\nu_z$ is given by
\begin{gather*}
	D^\nu_z f(z):=\frac{1}{\Gamma(n-\nu)} \int_{0}^{z} (z-u)^{n-\nu-1} f^{(n)}(u) \, {\rm d}u, \qquad n-1 < \nu <n, \quad n \in \mathbb{N},
\end{gather*}
see \cite[Section 2.4]{MR2218073}. Here $f^{(n)}$ is the usual $n$:th derivative.

Recall that $p=0$
in the rescaling \eqref{zeta eta} here,
and $K_{\lambda,c}$ is given by \eqref{K ML}.

\begin{Proposition} \label{Thm_bulkS}
	For each $\lambda>0$ and $c>-1$, there exists a sequence of cocycles $(c_N)_{N=1}^\infty$ such that
	\begin{gather*}
			\lim_{N \to \infty} c_N\cdot K_N(z,w)=K_{\lambda,c}(z,w)
			={\rm e}^{ -\frac{ |z|^{2\lambda}+|w|^{2\lambda} }{\lambda} } \begin{pmatrix}
				\kappa_{\lambda,c}(z,w) & \kappa_{\lambda,c}(z,\bar{w}) \\
				\kappa_{\lambda,c}(\bar{z},w) & \kappa_{\lambda,c}(\bar{z},\bar{w})
			\end{pmatrix}
		\end{gather*}
	uniformly for $z$, $w$ in compact subsets of~$\C$, where
	\begin{gather*}
		\kappa_{\lambda,c}(z,w)=\left( \frac{2}{\lambda} \right)^{ \frac{1}{2\lambda} } (zw)^{\lambda-1} \widetilde{\kappa}_{\lambda,c} \left( \sqrt{ \frac{2}{\lambda} } z^\lambda, \sqrt{ \frac{2}{\lambda} } w^\lambda \right).
	\end{gather*}
	Here $\widetilde{\kappa}_{\lambda,c}$ is given in terms of a function $\widetilde{G}$ as
	\begin{gather*}
	\widetilde{\kappa}_{\lambda,c}(z,w)=\widetilde{G}(z,w)-\widetilde{G}(w,z)
	\end{gather*}
 and the following fractional differential equations hold:
	\begin{gather} \label{G zw FDE}
		D_z^{ \frac{1}{\lambda} } \widetilde{G}(z,w)= z^{ \frac{1}{\lambda} } \widetilde{G}(z,w)+(zw)^{\frac{1+c}{\lambda}-1} E_{\frac{2}{\lambda} \frac{1+c}{\lambda} } \big( (zw)^{\frac{2}{\lambda}} \big),
\\
			D_z^{ \frac{1}{\lambda} } \widetilde{\kappa}_{\lambda,c}(z,w)
			= z^{ \frac{1}{\lambda} } \widetilde{\kappa}_{\lambda,c}(z,w)+ (zw)^{ \frac{1+c}{\lambda}-1 } E_{ \frac{1}{\lambda},\frac{1+c}{\lambda} } \big( (zw)^{\frac{1}{\lambda}} \big)
			\nonumber\\
\hphantom{D_z^{ \frac{1}{\lambda} } \widetilde{\kappa}_{\lambda,c}(z,w)=}{}
 -\frac{ \Gamma\big(\frac{1+c}{\lambda}\big) }{\Gamma\big( \frac{c}{\lambda}\big) \Gamma\big( \frac{2+c}{\lambda}\big) } (zw)^{ \frac{1+c}{\lambda}-1 }(w/z)^{\frac{1}{\lambda}} E_{\frac{1}{\lambda},2,3+c-\lambda}\big(w^{\frac{2}{\lambda}}\big) .\label{FDE_bulkS}
		\end{gather}
\end{Proposition}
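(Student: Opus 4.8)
\textbf{Proof strategy for Proposition~\ref{Thm_bulkS}.}
The plan is to follow the same route as in the proof of Theorem~\ref{Thm_edge kernel}: start from the finite-$N$ skew-orthogonal polynomial representation of the pre-kernel, rescale, identify a suitable cocycle that strips off the Gaussian-type factors and pins the quantization of the exponent, and then pass to a limiting differential equation. First I would use \eqref{SOP ML}, \eqref{sk ML}, and \eqref{bfkappaN} to write $\bfkappa_N(\zeta,\eta) = \bfG_N(\zeta,\eta) - \bfG_N(\eta,\zeta)$ with an explicit double sum $\bfG_N$, then substitute the rescaling $\zeta = r_N z$, $\eta = r_N w$ with $r_N = (\lambda N/2)^{-1/(2\lambda)}$ from \eqref{micro scale ML} and the factor $r_N^3$ from \eqref{kappa_N}. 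The cocycle $c_N$ is chosen so that $c_N \kappa_N$, after dividing off ${\rm e}^{-N(Q(\zeta)+Q(\eta))/2}$, converges to an entire object; the substitution $z \mapsto \sqrt{2/\lambda}\,z^\lambda$ absorbs the fractional powers and produces $\widetilde G$ with the $(zw)^{\lambda-1}$ prefactor and overall constant as stated. One then sets $\widetilde\kappa_{\lambda,c} = \widetilde G(z,w) - \widetilde G(w,z)$.

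The heart of the matter is the fractional differential equation \eqref{G zw FDE}. Here I would compute $D_z^{1/\lambda}$ applied to the (limiting) series for $\widetilde G$ term-by-term: the Caputo derivative acts on monomials $z^{\alpha}$ by the standard rule $D_z^{\nu} z^{\alpha} = \Gamma(\alpha+1)/\Gamma(\alpha+1-\nu)\, z^{\alpha-\nu}$, and the index shift by $1/\lambda$ on the $z$-variable degrees precisely converts the coefficient structure of $\widetilde G$ into $z^{1/\lambda}\widetilde G$ plus a "boundary" term, which upon resumming is recognised as the Mittag-Leffler function $(zw)^{(1+c)/\lambda-1} E_{\frac{2}{\lambda},\frac{1+c}{\lambda}}\!\big((zw)^{2/\lambda}\big)$ by comparison with the definition \eqref{ML function}. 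This is the analogue, at the level of the full (double-indexed) kernel, of the Christoffel--Darboux recurrence; the extra inhomogeneous term reflects the $N-1$ truncation of the sum. Equation \eqref{FDE_bulkS} then follows by antisymmetrising: applying $D_z^{1/\lambda}$ to $\widetilde\kappa_{\lambda,c}(z,w) = \widetilde G(z,w) - \widetilde G(w,z)$, the first summand gives the $\widetilde G(z,w)$ contribution directly, while $D_z^{1/\lambda}$ of $\widetilde G(w,z)$ (where $z$ appears only through the degrees contributed by $w$'s role, i.e.\ as a polynomial in $z$) produces the last, genuinely new term; here one must carefully track how $D_z^{1/\lambda}$ interacts with the branch structure, and the Gamma-ratio prefactor $\Gamma(\tfrac{1+c}{\lambda})/\big(\Gamma(\tfrac{c}{\lambda})\Gamma(\tfrac{2+c}{\lambda})\big)$ and the three-parametric function $E_{\frac1\lambda,2,3+c-\lambda}$ arise from resumming that piece.

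For the convergence statement itself, I would argue exactly as in Theorem~\ref{Thm_edge kernel}: uniform convergence of $c_N\kappa_N$ on compact subsets of $\C$ follows from a Weierstrass $M$-test on the rescaled double sum (using Stirling to control the ratios of Gamma functions), together with the elementary asymptotics of incomplete-Gamma type sums that appeared there; this also handles the convergence of the Gaussian-type factor to ${\rm e}^{-(|z|^{2\lambda}+|w|^{2\lambda})/\lambda}$ via ${\rm e}^{-NQ(\zeta)/2} \to {\rm e}^{-|z|^{2\lambda}/\lambda}$ under the rescaling. The differential equations for the limit are then obtained by differentiating term-by-term, justified by the same uniform bounds, and passing to the limit in the finite-$N$ identity.

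\textbf{Main obstacle.} I expect the bookkeeping in the fractional-derivative computation to be the genuine difficulty: one must handle $D_z^{1/\lambda}$ for non-integer $1/\lambda$ acting on a function with branch points at the origin, correctly incorporate the lower integration limit at $0$ in the Caputo definition (which is why $\widetilde\kappa_{\lambda,c}$ is naturally expressed through $\widetilde G$ rather than a single function), and recognise two distinct Mittag-Leffler families in the resummed remainders. Keeping the index shifts consistent between the $2k$ and $2k+1$ families of skew-orthogonal polynomials — and verifying that the truncation error at $k=N-1$ survives in the limit as precisely the stated inhomogeneous terms while all other truncation contributions vanish — is where the argument requires the most care.
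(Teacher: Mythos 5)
Your overall route is the paper's: write $\bfkappa_N=\bfG_N(\zeta,\eta)-\bfG_N(\eta,\zeta)$ from \eqref{SOP ML}, \eqref{sk ML}, \eqref{bfkappaN}, rescale with $r_N=(\lambda N/2)^{-1/(2\lambda)}$, substitute $z\mapsto\sqrt{2/\lambda}\,z^\lambda$ to produce $\widetilde G$, pass to the limit by a Weierstrass $M$-test, and then apply $D_z^{1/\lambda}$ term-by-term using the Caputo monomial rule. That is exactly how the paper proceeds.

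There is, however, one concrete misconception that would derail the computation if you followed your plan literally: you attribute the inhomogeneous Mittag-Leffler terms in \eqref{G zw FDE} and \eqref{FDE_bulkS} to ``the $N-1$ truncation of the sum'', and in your obstacles paragraph you propose to verify that ``the truncation error at $k=N-1$ survives in the limit as precisely the stated inhomogeneous terms''. It does not. Unlike the edge computation in Theorem~\ref{Thm_edge kernel} (where the boundary term $\propto(\sqrt{2N}+\sqrt2 z)^{2N}/(2N-1)!!$ from the truncation genuinely survives and produces the $\erfc(\sqrt2 w)$ inhomogeneity), here the differential equations are derived directly for the \emph{limiting} series $\widetilde G=\lim_N\widetilde G_N$, in which the outer sum over $k$ is already infinite; the truncation contributes nothing at the origin. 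The inhomogeneous terms instead come from two internal sources: (i) after applying $D_z^{1/\lambda}$ and reindexing $k\mapsto k+1$, the inner sum over $l$ runs to $k+1$ rather than $k$, and the surplus diagonal terms $l=k+1$ resum to $(zw)^{\frac{1+c}{\lambda}-1}E_{\frac2\lambda,\frac{1+c}{\lambda}}\big((zw)^{2/\lambda}\big)$ (and the analogous diagonal of $\widetilde G(w,z)$ gives $E_{\frac2\lambda,\frac{2+c}{\lambda}}$; the two combine via \eqref{ML function} into the $E_{\frac1\lambda,\frac{1+c}{\lambda}}$ term of \eqref{FDE_bulkS}); (ii) the lowest power $z^{\frac{1+c}{\lambda}-1}$ appearing in $\widetilde G(w,z)$, on which the Caputo derivative acts with the ratio $\Gamma\big(\tfrac{1+c}{\lambda}\big)/\Gamma\big(\tfrac{c}{\lambda}\big)$ rather than being absorbed into $z^{1/\lambda}\widetilde G(w,z)$, which is what generates the three-parametric term $E_{\frac1\lambda,2,3+c-\lambda}$ with its Gamma-ratio prefactor. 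If you replace your truncation-based accounting with this diagonal-plus-lowest-order accounting, the rest of your argument goes through as in the paper.
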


We call the equation \eqref{FDE_bulkS} (generalised) Christoffel--Darboux formula.
Such a differential equation satisfied by the correlation kernel, that makes the asymptotic analysis possible, is broadly called Christoffel--Darboux type formula, see, e.g., \cite{MR1917675,byun2021lemniscate,MR3450566}.

We also remark that the inhomogeneous term $(zw)^{ \frac{1+c}{\lambda}-1 } E_{ \frac{1}{\lambda},\frac{1+c}{\lambda} } \big( (zw)^{\frac{1}{\lambda}} \big)$ corresponds to the kernel of the complex Mittag-Leffler ensemble, see~\cite{ameur2018random}.
Such a relation has been observed for other models as well, which include the elliptic Ginibre ensemble~\cite{BE} and its chiral counter part~\cite{MR2180006}.
See also~\cite{MR1762659,MR1675356} for a similar relation for Hermitian ensembles, which gives an expression of the kernel of the symplectic ensemble in terms of a small number of orthogonal polynomials.

\begin{proof}[Proof of Proposition~\ref{Thm_bulkS}]
	Let us define
	\begin{gather*} 
		\widetilde{G}_{N}(z,w):=\sum_{k=0}^{N-1}z^{ \frac{2k+2+c}{\lambda}-1 } \prod_{j=0}^k \frac{ \Gamma\big( \frac{2j+1+c}{\lambda} \big) }{\Gamma\big( \frac{2j+2+c}{\lambda} \big)}
		\sum_{l=0}^{k} \frac{ w^{ \frac{2l+1+c}{\lambda}-1 } }{\Gamma\big( \frac{2l+1+c}{\lambda}\big)} \prod_{j=1}^l \frac{\Gamma\big( \frac{2j+c}{\lambda} \big)}{ \Gamma\big(\frac{2j-1+c}{\lambda}\big) } .
	\end{gather*}
	Here and henceforth we use the convention that if $l=0$, then $\prod_{j=1}^l \varphi=1$.
	By \eqref{SOP ML}, \eqref{sk ML} and~\eqref{micro scale ML}, the kernel $K_N$ is given by
	\begin{gather*}
			K_N(z,w)
			={\rm e}^{ -\frac{ |z|^{2\lambda}+|w|^{2\lambda} }{\lambda} } \begin{pmatrix}
				\kappa_N(z,w) & \kappa_N(z,\bar{w}) \\
				\kappa_N(\bar{z},w) & \kappa_N(\bar{z},\bar{w})
			\end{pmatrix},
		\end{gather*}
	where
	\begin{gather*}
		\kappa_N(z,w):=G_N(z,w)-G_N(w,z), \\ G_N(z,w):=\left( \frac{2}{\lambda} \right)^{\frac{1}{2\lambda}} (zw)^{\lambda-1} \widetilde{G}_N \left( \sqrt{ \frac{2}{\lambda} } z^\lambda, \sqrt{ \frac{2}{\lambda} } w^\lambda \right).
	\end{gather*}
	
	By definition of the Caputo derivative, we have that if $k>
	\lceil \nu \rceil$
	\begin{gather} \label{Caputo monomial}
		D_z^\nu z^k =\frac{\Gamma(k+1)}{\Gamma(k-\nu+1)}z^{k-\nu}
	\end{gather}
	and vanishes otherwise.
	Let us write $\widetilde{G}:=\lim\limits_{N \to \infty} \widetilde{G}_N,$ where the convergence is uniform on compact subsets of $\C$. The existence of the limit follows from the Weierstrass $M$-test.
	By direct computations using \eqref{Caputo monomial}, we have
	\begin{gather}
\widetilde{G}(z,w)-\frac{1}{\Gamma\big(\frac{2+c}{\lambda}\big)} z^{ \frac{2+c}{\lambda}-1 } w^{ \frac{1+c}{\lambda}-1 }
			\nonumber\\
\qquad{} =\sum_{k=1}^{\infty} z^{ \frac{2k+2+c}{\lambda}-1 } \prod_{j=0}^k \frac{ \Gamma\big( \frac{2j+1+c}{\lambda} \big) }{\Gamma\big( \frac{2j+2+c}{\lambda} \big)}
			\sum_{l=0}^{k} \frac{ w^{ \frac{2l+1+c}{\lambda}-1 } }{\Gamma\big( \frac{2l+1+c}{\lambda}\big)} \prod_{j=1}^l \frac{\Gamma\big( \frac{2j+c}{\lambda} \big)}{ \Gamma\big(\frac{2j-1+c}{\lambda}\big) } .\label{G zw til lam c 0}
		\end{gather}
	Note in particular that as $z\to 0,$
	\begin{gather} \label{G zw til asym}
		\widetilde{G}(z,w) \sim \frac{w^{ \frac{1+c}{\lambda}-1 } }{\Gamma\big(\frac{2+c}{\lambda}\big)} z^{ \frac{2+c}{\lambda}-1 }.
	\end{gather}
	Applying the operator $D_z^{ \frac{1}{\lambda} }$ to the identity \eqref{G zw til lam c 0}, we obtain
	\begin{gather*}
 D_z^{ \frac{1}{\lambda} } \widetilde{G}(z,w)-\frac{ 1}{\Gamma\big( \frac{1+c}{\lambda} \big)}(zw)^{ \frac{1+c}{\lambda}-1 }
		\\
\qquad{} = \sum_{k=1}^{\infty} z^{ \frac{2k+1+c}{\lambda}-1 } \prod_{j=1}^{k}\frac{ \Gamma \big( \frac{2j-1+c}{\lambda} \big) }{\Gamma\big( \frac{2j+c}{\lambda} \big)} \sum_{l=0}^{k} \frac{ w^{ \frac{2l+1+c}{\lambda}-1 } }{\Gamma\big( \frac{2l+1+c}{\lambda} \big)} \prod_{j=1}^l \frac{\Gamma\big(\frac{2j+c}{\lambda}\big) }{ \Gamma\big( \frac{2j-1+c}{\lambda} \big) }
		\\
\qquad{} = z^{ \frac{1}{\lambda} } \sum_{k=0}^{\infty} z^{ \frac{2k+2+c}{\lambda}-1 } \prod_{j=0}^k \frac{ \Gamma \big( \frac{2j+1+c}{\lambda} \big) }{\Gamma\big( \frac{2j+2+c}{\lambda} \big)} \sum_{l=0}^{k+1} \frac{ w^{ \frac{2l+1+c}{\lambda}-1 } }{\Gamma\big( \frac{2l+1+c}{\lambda} \big)} \prod_{j=1}^l \frac{\Gamma\big(\frac{2j+c}{\lambda}\big) }{ \Gamma\big( \frac{2j-1+c}{\lambda} \big) } ,
	\end{gather*}
	which leads to
	\begin{gather*}
		D_z^{ \frac{1}{\lambda} } \widetilde{G}(z,w)= z^{ \frac{1}{\lambda} } \widetilde{G}(z,w)
		+\sum_{k=0}^{\infty} \frac{ (zw)^{ \frac{2k+1+c}{\lambda}-1 } }{\Gamma\big( \frac{2k+1+c}{\lambda} \big)}.
	\end{gather*}
	Now \eqref{G zw FDE} follows from \eqref{ML function}.
	
	Similarly, by rearranging the terms, we have
	\begin{gather}
 \widetilde{G}(w,z)-\frac{ 1 }{\Gamma\big( \frac{1+c}{\lambda}\big)} z^{ \frac{1+c}{\lambda}-1 } \sum_{k=0}^{\infty} w^{ \frac{2k+2+c}{\lambda}-1 } \prod_{j=0}^k \frac{ \Gamma\big( \frac{2j+1+c}{\lambda} \big) }{\Gamma\big( \frac{2j+2+c}{\lambda} \big)}
			\nonumber\\
\qquad{} =\sum_{k=1}^{\infty} w^{ \frac{2k+2+c}{\lambda}-1 } \prod_{j=0}^k \frac{ \Gamma\big( \frac{2j+1+c}{\lambda} \big) }{\Gamma\big( \frac{2j+2+c}{\lambda} \big)} \sum_{l=1}^{k} \frac{ z^{ \frac{2l+1+c}{\lambda}-1 } }{\Gamma\big( \frac{2l+1+c}{\lambda} \big)} \prod_{j=1}^l \frac{\Gamma\big(\frac{2j+c}{\lambda}\big) }{ \Gamma\big( \frac{2j-1+c}{\lambda} \big) } .\label{G wz til lam c 0}
		\end{gather}
	Then by \eqref{ML3}, we have that as $z\to 0$,
	\begin{gather*} 
		\widetilde{G}(w,z)\sim \frac{1 }{ \Gamma\big( \frac{2+c}{\lambda}\big) } w^{ \frac{2+c}{\lambda}-1 } E_{\frac{1}{\lambda},2,3+c-\lambda}\big(w^{\frac{2}{\lambda}}\big) z^{ \frac{1+c}{\lambda}-1 }
	\end{gather*}
	Again, by applying the operator $D_z^{ \frac{1}{\lambda} }$ to \eqref{G wz til lam c 0}, we obtain
	\begin{gather*}
D_z^{ \frac{1}{\lambda} } \widetilde{G}(w,z)-\frac{ \Gamma\big(\frac{1+c}{\lambda}\big) }{\Gamma\big( \frac{c}{\lambda}\big) \Gamma\big( \frac{2+c}{\lambda}\big) } (zw)^{ \frac{1+c}{\lambda}-1 }(w/z)^{\frac{1}{\lambda}} E_{\frac{1}{\lambda},2,3+c-\lambda}\big(w^{\frac{2}{\lambda}}\big)\\
\qquad{} = z^{ \frac{1}{\lambda} } \sum_{k=1}^{\infty} w^{ \frac{2k+2+c}{\lambda}-1 } \prod_{j=0}^k \frac{ \Gamma\big( \frac{2j+1+c}{\lambda} \big) }{\Gamma\big( \frac{2j+2+c}{\lambda} \big)} \sum_{l=0}^{k-1} \frac{ z^{ \frac{2l+1+c}{\lambda}-1 } }{\Gamma\big( \frac{2l+1+c}{\lambda} \big)} \prod_{j=1}^l \frac{\Gamma\big(\frac{2j+c}{\lambda}\big) }{ \Gamma\big( \frac{2j-1+c}{\lambda} \big) }
		\\
\qquad{} = z^{ \frac{1}{\lambda} } \widetilde{G}(w,z)- \sum_{k=0}^{\infty} \frac{(zw)^{ \frac{2k+2+c}{\lambda}-1 }}{\Gamma\big( \frac{2k+2+c}{\lambda} \big)}.
	\end{gather*}
	Then by \eqref{ML function}, we have
	\begin{gather}
D_z^{ \frac{1}{\lambda} } \widetilde{G}(w,z)= z^{ \frac{1}{\lambda} } \widetilde{G}(w,z) -(zw)^{\frac{2+c}{\lambda}-1} E_{\frac{2}{\lambda},\frac{2+c}{\lambda}} \big((zw)^{\frac{2}{\lambda}}\big) \nonumber\\
\hphantom{D_z^{ \frac{1}{\lambda} } \widetilde{G}(w,z)=}{}
+\frac{ \Gamma(\frac{1+c}{\lambda}) }{\Gamma( \frac{c}{\lambda}) \Gamma( \frac{2+c}{\lambda}) } (zw)^{ \frac{1+c}{\lambda}-1 }(w/z)^{\frac{1}{\lambda}} E_{\frac{1}{\lambda},2,3+c-\lambda}\big(w^{\frac{2}{\lambda}}\big) . \label{G wz FDE}
		\end{gather}
	Now \eqref{FDE_bulkS} follows from \eqref{G zw FDE}, \eqref{G wz FDE} and \eqref{ML function}.
\end{proof}

\subsection[Bulk singularities of the order lambda=1/m with m in N]{Bulk singularities of the order $\boldsymbol{\lambda=\frac1m}$ with $\boldsymbol{m\in \mathbb{N}}$}

A point $p$ in which the eigenvalue density $\frac12\Delta Q(p)$ vanishes or diverges is called bulk singularity, see \cite{ameur2018random} and references therein. In this subsection, we shall consider the case $\lambda=\frac1m$, where $m$ is a positive integer and prove Theorem~\ref{Thm_ML integer} for general $c>-1$.

\begin{proof}[Proof of Theorem~\ref{Thm_ML integer}]
	By \eqref{G zw FDE} and \eqref{G zw til asym}, the function $\widetilde{G}$ is a unique solution to the $m$-th order linear (ordinary) differential equation
	\begin{gather} \label{G zw ODE}
		\partial_z ^m \widetilde{G}(z,w)=z^m \widetilde{G}(z,w)+F_{m,c}(zw),
	\end{gather}
	which satisfies the asymptotic behaviour
	\begin{gather} \label{G zw asym int}
		\widetilde{G}(z,w) \sim \frac{w^{m+mc-1}}{ \Gamma(2m+mc) } z^{2m+mc-1},\qquad z \to 0.
	\end{gather}
	Here the function $F_{m,c}$ is given by \eqref{F integer}. Note that the $m$ initial conditions are provided by differentiating \eqref{G zw asym int}.
	
We aim to solve the initial value problem \eqref{G zw ODE} and~\eqref{G zw asym int}.
	It is well known that the functions~$g_{j,m}$ given by \eqref{m indep sol} provide the solutions to the homogeneous equation of~\eqref{G zw ODE}, see, e.g., \cite[Theorem~5.18]{gorenflo2014mittag}.
	By~\eqref{ML3}, we have that
	\begin{gather} \label{gj asymp}
		g_{j,m}(z) \sim z^{j-1} ,\qquad z \to 0.
	\end{gather}
	Using this, it is easy to observe that
	\begin{gather*}
		W(g_{1,m},\dots,g_{m,m})(z)|_{z=0}=\prod_{j=0}^{m-1} j!=G(m+1),
	\end{gather*}
	where $G$ is the Barnes $G$-function \cite[Section~5.17]{olver2010nist}. Due to Abel's identity, for $m>1$, this leads to
	\begin{gather} \label{Wronskian g1m}
		W(g_{1,m},\dots,g_{m,m})(z) \equiv G(m+1).
	\end{gather}
	On the other hand for $m=1$, we have obviously $W(g_1)(z)=g_1(z).$
	
	For each $j \in \{1,\dots, m\}$, let us write
	\begin{gather*}
		\widetilde{W}_{j,m}(z,w):=\det
		\begin{bmatrix}
			g_{1,m}(z) & \dots & g_{j-1,m}(z) & 0 & g_{j+1,m}(z) & \dots & g_{m,m}(z)
			\\
			g_{1,m}'(z) & \dots & g_{j-1,m}'(z) & 0 & g_{j+1,m}'(z) & \dots & g_{m,m}'(z)
			\\
			\vdots & \vdots & \vdots & \vdots & \vdots & \vdots & \vdots
			\\
			g_{1,m}^{(m-1)}(z) & \dots & g_{j-1,m}^{(m-1)}(z) & F_{m,c}(zw) & g_{j+1,m}^{(m-1)}(z) & \dots & g_{m,m}^{(m-1)}(z)
		\end{bmatrix}.
	\end{gather*}
	Note here that by \eqref{W_j(z)}, we have
	\begin{gather} \label{Wj zw F Wj}
		\widetilde{W}_{j,m}(z,w)=(-1)^{m-j} F_{m,c}(zw) W_{j,m}(z).
	\end{gather}
	
	From now on, we shall only consider the case $m>1$. The other case $m=1$ follows along the same lines. In this case, one can also easily solve the associated linear non-homogenous ODE of degree $1$ employing an integrating factor.
	
	By virtue of the method of variation of parameters and \eqref{Wronskian g1m}, one can write
	\begin{align*}
			\widetilde{G}(z,w)&= \int_0^z \frac{1}{W(g_{1,m},\dots,g_{m,m})(s)}\sum_{j=1}^{m} 	\widetilde{W}_{j,m}(s,w) g_{j,m}(z)\,{\rm d}s+ \sum_{j=1}^{m} C_{j,m}(w) g_{j,m}(z)
			\\
			&=\frac{1}{G(m+1)}\int_0^z \sum_{j=1}^{m} 	\widetilde{W}_{j,m}(s,w) g_{j,m}(z)\,{\rm d}s+ \sum_{j=1}^{m} C_{j,m}(w) g_{j,m}(z)
	\end{align*}
	for some functions $C_{j,m}$. We shall show that in the above expression $\widetilde{G}$ satisfies the asymptotic behaviour~\eqref{G zw asym int} if and only if $C_{j,m} \equiv 0$ for every~$j$.
	
	By \eqref{gj asymp} and \eqref{Wronskian g1m}, direct computations of the determinant give that as $z \to 0$,
	\begin{gather*}
	W_{j,m}(z) \sim \frac{G(m+1)}{(m-1)!} \binom{m-1}{j-1} z^{m-j}.
	\end{gather*}
	On the other hand, by \eqref{ML function}, as $z \to 0$,
	\begin{gather*}
	F_{m,c}(zw) \sim \frac{w^{m+mc-1}}{\Gamma(m+mc)} z^{m+mc-1}.
	\end{gather*}
	Combining these two equations, we have that as $z\to 0$,
	\begin{gather*}
	\widetilde{W}_{j,m}(z,w) \sim \frac{w^{m+mc-1}}{\Gamma(m+mc)} (-1)^{m-j} \frac{G(m+1)}{(m-1)!} \binom{m-1}{j-1} z^{2m+mc-1-j}.
	\end{gather*}
	This leads to
	\begin{gather*}
	\frac{1}{G(m+1)}\int_0^z \widetilde{W}_{j,m}(s,w) g_{j,m}(z)\,{\rm d}s\\
\qquad{} \sim \frac{w^{m+mc-1}}{\Gamma(m+mc)} \frac{1}{(m-1)!} \binom{m-1}{j-1} \frac{(-1)^{m-j} }{2m+mc-j} z^{2m+mc-1}.
	\end{gather*}
	
	Next, we show the identity
	\begin{gather}\label{Comb identity}
		\frac{1}{(m-1)!} \sum_{j=1}^{m} \binom{m-1}{j-1} \frac{(-1)^{m-j} }{2m+mc-j} = \frac{\Gamma(m+mc)}{\Gamma(2m+mc)}.
	\end{gather}
	By the binomial theorem, we have
	\begin{gather*}
	\sum_{k=0}^{m-1} \binom{m-1}{k} x^{m+mc-1+k} = x^{m+mc-1}(1+x)^{m-1}.
	\end{gather*}
	Integrating this identity, we have
	\begin{gather*}
	\sum_{k=0}^{m-1} \binom{m-1}{k} \frac{x^{m+mc+k}}{m+mc+k} = \int_0^x t^{m+mc-1}(1+t)^{m-1}\,{\rm d}t.
	\end{gather*}
	Letting $x=-1$ and using the change of the variable $t=-s$, we have
	\begin{align*}
		\sum_{k=0}^{m-1} \binom{m-1}{k} \frac{(-1)^{k}}{m+mc+k} &= \int_0^1 s^{m+mc-1}(1-s)^{m-1}\,{\rm d}s
		\\
		&=B(m+mc,m)=(m-1)! \frac{\Gamma(m+mc)}{\Gamma(2m+mc)},
	\end{align*}
	where $B$ is the beta function. This gives~\eqref{Comb identity}.
	
	Using \eqref{Comb identity}, we have
	\begin{gather*}
	\frac{1}{G(m+1)}\int_0^z \sum_{j=1}^{m} 	\widetilde{W}_{j,m}(s,w) g_{j,m}(z)\,{\rm d}s \sim \frac{w^{m+mc-1}}{ \Gamma(2m+mc) } z^{2m+mc-1}.
	\end{gather*}
	Thus by \eqref{G zw asym int}, we obtain that $C_{j,m} \equiv 0$ for all $j \in \{1,\dots, m\}$.
	Therefore by~\eqref{Wj zw F Wj}, we conclude that
	\begin{align*}
		\widetilde{G}(z,w)&= \frac{1}{G(m+1)}\int_0^1 \sum_{j=1}^{m} 	\widetilde{W}_{j,m}(sz,w) z g_{j,m}(z)\,{\rm d}s
		\\
		&= \frac{1}{G(m+1)}\int_0^1 \sum_{j=1}^{m} (-1)^{m-j} 	\widetilde{W}_{j,m}(sz) z g_{j,m}(z)F_{m,c}(szw)\,{\rm d}s.
	\end{align*}
	This completes the proof.
\end{proof}

\section{Translation-invariant scaling limit and Ward's equation}

The following two subsections are devoted to proving Propositions~\ref{Thm_mass1}, the mass-one equation for the Ginibre $\infty$-point process, and
Ward's equation at finite-$N$ for general potentials, Proposition~\ref{Thm_Ward N}.

\subsection{Mass-one equation}

In this subsection, we consider the Pfaffian $\infty$-point process with the correlation kernel \eqref{K Gkernel E}.

By \eqref{RNk Pf}, we have
\begin{gather*}
R_1(z)=(\bar{z}-z){\rm e}^{ -2|z|^2 } \kappa(z,\bar{z})
\end{gather*}
and
\begin{gather*}
	R_2(z,w)=(\bar{z}-z)(\bar{w}-w) {\rm e}^{ -2|z|^2-2|w|^2 }
\big( \kappa(z,\bar{z}) \kappa(w,\bar{w})-|\kappa(z,w)|^2+|\kappa(z,\bar{w})|^2 \big).
\end{gather*}
Therefore the Berezin kernel is written as
\begin{gather} \label{B}
	B(z,w)= (\bar{w}-w) {\rm e}^{ -2|w|^2 } \frac{ |\kappa(z,w)|^2-|\kappa(z,\bar{w})|^2 }{ \kappa(z,\bar{z})}.
\end{gather}
We now prove Proposition~\ref{Thm_mass1}.

\begin{proof}[Proof of Proposition~\ref{Thm_mass1}]
	By \eqref{B} and conjugation symmetry, the mass-one equation \eqref{mass-one} is equivalent to
	\begin{gather} \label{mass-one 1}
		\kappa(z,\bar{z})=2 \int_{ \C } (\bar{w}-w) {\rm e}^{ -2|w|^2 } |\kappa(z,w)|^2 \,{\rm d}A(w).
	\end{gather}
	
	Note that the pre-kernel $\kappa \equiv \kappa_a^\R$ given in \eqref{kappa a} is written as
	\begin{gather*}
	\kappa(z,w)=\sqrt{\pi} {\rm e}^{z^2+w^2} \int_{E} f'(w-u)f(z-u)-f'(z-u) f(w-u) \,{\rm d}u, \\
	f(z):=\tfrac12 \erfc\big(\sqrt{2}z\big),
	\end{gather*}
	where $E \equiv E_a=(-\infty,a)$.
	Using this expression, we rewrite the right-hand side of the equation~\eqref{mass-one 1} as
	\begin{gather*}
	-4{\rm i} {\rm e}^{z^2+\bar{z}^2} \int_{ \R^2 } y {\rm e}^{-4y^2} \int_{ E^2 } F(u,v) \,{\rm d}u \,{\rm d}v \,{\rm d}x \,{\rm d}y,
	\end{gather*}
	where
	\begin{gather*}
		F(u,v) = \big( f'(x+{\rm i}y-u)f(z-u)-f'(z-u) f(x+{\rm i}y-u) \big)
		\\
\hphantom{F(u,v) =}{}
\times \big( f'(x-{\rm i}y-v)f(\bar{z}-v)-f'(\bar{z}-v) f(x-{\rm i}y-v) \big).
	\end{gather*}
	
We now compute
\begin{gather*}
		g(u,v):= \int_{ \R^2 } y {\rm e}^{-4y^2} f'(x+{\rm i}y-u)f'(x-{\rm i}y-v) \,{\rm d}y \,{\rm d}x.
\end{gather*}
Due to the translation invariance in $x$, it suffices to consider the case $v=0$. Then we have
	\begin{gather*}
		g(u,0) = \frac{2}{\pi} \int_{ \R^2 } y {\rm e}^{-4y^2} {\rm e}^{-2(x+{\rm i}y-u)^2} {\rm e}^{-2(x-{\rm i}y)^2} \,{\rm d}y \,{\rm d}x
	= \frac{2}{\pi} \int_{\R} {\rm e}^{-4x^2+4xu-2u^2} \int_{\R} y {\rm e}^{4{\rm i}uy} \,{\rm d}y \,{\rm d}x.
	\end{gather*}
Note that in the sense of distributions, we have
	\begin{gather*}
			\int_{\R} y {\rm e}^{4{\rm i}uy} \,{\rm d}y= -\int_{\R} y {\rm e}^{-4{\rm i}uy} \,{\rm d}y=-\frac{1}{16} \int_{\R} y {\rm e}^{-{\rm i}u y} \,{\rm d}y=\frac{\pi }{8{\rm i}} \delta'(u).
		\end{gather*}
	Thus it follows from $\int_\R {\rm e}^{-4x^2}\,{\rm d}x=\tfrac{\sqrt{\pi}}{2}$ that
	\begin{gather*}
		g(u,v)= \frac{\sqrt{\pi}}{8{\rm i}} {\rm e}^{-(u-v)^2} \delta'(u-v).
	\end{gather*}
	This leads to
	\begin{gather*}
 \frac{8{\rm i}}{\sqrt{\pi}} {\rm e}^{(u-v)^2} \int_{ \R^2 } y {\rm e}^{-4y^2} F(u,v) \,{\rm d}x\,{\rm d}y
		\\
\qquad{} =f(z-u)f(\bar{z}-v) \delta'(u-v)-f(z-u)f'(\bar{z}-v) \delta(u-v)
		\\
\qquad\quad{} +f'(z-u) f(\bar{z}-v) \delta(u-v)-f'(z-u) f'(\bar{z}-v) \Theta(u-v),
	\end{gather*}
	where $\Theta$ denotes the Heaviside theta function. Combining the above equations, interchanging the integrals, and using integration by parts, we obtain
	\begin{gather*}
 2 \int_{ \C } (\bar{w}-w) {\rm e}^{ -2|w|^2 } |\kappa(z,w)|^2 \,{\rm d}A(w) \\
 \qquad{} =-4{\rm i} {\rm e}^{z^2+\bar{z}^2} \int_{ E^2 } \int_{ \R^2 } y \, {\rm e}^{-4y^2} F(u,v) \,{\rm d}x \,{\rm d}y \,{\rm d}u \,{\rm d}v
		\\
\qquad{}= \sqrt{\pi} {\rm e}^{z^2+\bar{z}^2} \int_{E} f'(\bar{z}-u)f(z-u)-f'(z-u) f(\bar{z}-u)\,{\rm d}u=\kappa(z,\bar{z}),
	\end{gather*}
	which completes the proof.
\end{proof}

\subsection[Ward's equation at finite-N]{Ward's equation at finite-$\boldsymbol{N}$} In this subsection, we consider the symplectic ensemble~\eqref{Gibbs} with general external potential~$Q$ and prove Proposition~\ref{Thm_Ward N}. In the sequel, we assume that $\theta=0$ without loss of generality.

\begin{proof}[Proof of Proposition~\ref{Thm_Ward N}]
	We denote by $\E_N$ the expectation with respect to the Gibbs measure~\eqref{Gibbs}. Let $\psi_N$ be a fixed test function.	
	Integration by parts gives that, for each $j$,
	\begin{gather*}
		\E_N[\d\psi_N(\zeta_j)]= \E_N[\d_j\Ham_N(\zeta_1,\dots,\zeta_N)\psi_N(\zeta_j)],
	\end{gather*}
	where $\Ham_N$ is the Hamiltonian given by~\eqref{Ham}.
	Summing in $j$, we obtain
	\begin{gather*}
\sum_{j=1}^N\E_N [\d\psi_N(\zeta_j)]= \E_N\sum_{j=1}^N\psi_N(\zeta_j)\left[N\d Q(\zeta_j)- \frac{2}{\zeta_j-\overline{\zeta}_j} \right]
			\\
\hphantom{\sum_{j=1}^N\E_N [\d\psi_N(\zeta_j)]=}{} -\E_N\sum_{j=1}^N\psi_N(\zeta_j)\Bigg[\sum_{ \substack{k=1 \\ (k\ne j)}}^N \left(\frac {1}{\zeta_j-\zeta_k}+\frac{1}{\zeta_j-\overline{\zeta}_k}\right)\Bigg].
		\end{gather*}
	Thus we have
	\begin{gather} \label{Ward N}
		\E_N \I_N[\psi_N]-\E_N \II_N[\psi_N] +\E_N \III_N[\psi_N]+\E_N \RN{4}_N[\psi_N]=0,
	\end{gather}
	where
	\begin{alignat*}{3}
&			\I_N[\psi_N]=\sum_{\substack{j,k=1\\j\ne k}}^N \psi_N(\zeta_j) \left(\frac {1}{\zeta_j-\zeta_k}+\frac{1}{\zeta_j-\overline{\zeta}_k}\right), \qquad&&
			\II_N[\psi_N]=N\sum_{j=1}^N [\psi_N\d Q](\zeta_j),& \\
&			\III_N[\psi_N]=\sum_{j=1}^N\d\psi_N(\zeta_j), \qquad&&
			\RN{4}_N[\psi_N]=\sum_{j=1}^N \psi_N(\zeta_j) \frac{2}{\zeta_j-\overline{\zeta}_j} .&
	\end{alignat*}
	
	Due to the conjugation symmetry $Q(\zeta)=Q\big(\bar{\zeta}\big)$, one can easily observe that for any $j$,
	\begin{gather*}
	\Ham_N(\zeta_1,\dots, \zeta_j,\dots ,\zeta_N)= \Ham_N\big(\zeta_1,\dots, \bar{\zeta}_j,\dots \zeta_N\big).
	\end{gather*}
	Then it follows from the definition \eqref{bfRNk def} that $\bfR_{N,2}(\zeta,\eta)=\bfR_{N,2}(\zeta,\bar{\eta})$.
	Using the change of variable $\eta \mapsto \bar{\eta}$, we have
	\begin{gather*}
		\int_\C \frac{ \bfR_{N,2}(\zeta,\eta)}{\zeta-\bar{\eta}} \,{\rm d}A(\eta) = \int_\C \frac{ \bfR_{N,2}(\zeta,\eta)}{\zeta-\eta} \,{\rm d}A(\eta),
	\end{gather*}
	which leads to
	\begin{gather*}
		\E_N \RN{1}_N[\psi_N]= 2\int_{ \C^2 } \psi_N(\zeta) \frac{ \bfR_{N,2}(\zeta,\eta) }{\zeta-\eta} \,{\rm d}A(\zeta) \,{\rm d}A(\eta).
	\end{gather*}
	Also it follows from the definition and integration by parts that
	\begin{gather*}
		\E_N \RN{2}_N[\psi_N]=N \int_\C \psi_N(\zeta) \bfR_{N}(\zeta)\partial Q(\zeta) \,{\rm d}A(\zeta),
		\\
		\E_N \RN{3}_N[\psi_N]=-\int_\C \psi_N(\zeta) \partial \bfR_{N}(\zeta) \,{\rm d}A(\zeta), \qquad
		\E_N \RN{4}_N[\psi_N]=2\int_\C \psi_N(\zeta) \frac{\bfR_{N}(\zeta)}{\zeta-\overline{\zeta}} \,{\rm d}A(\zeta).
	\end{gather*}
	
	To describe the rescaled version of Ward's equation, let $\psi(z):=\psi_N(\zeta)$, where $\zeta=p+ r_N z.$ By the relation
	\begin{gather*}
		R_{N,k}(z_1,\dots,z_k)=r_N^{2k} \, \bfR_{N,k}(\zeta_1,\dots, \zeta_k), \qquad \zeta_j=p+ r_N z_j
	\end{gather*}
	and the change of variables $\zeta \mapsto p+ r_N z$, $\eta \mapsto p+ r_N w$, we have
	\begin{gather*}
		\E_N \I_N[\psi_N]=\frac{2}{r_N} \int_{ \C } \psi(z) \int_{ \C } \frac{ R_{N,2}(z,w)}{z-w} \,{\rm d}A(w) \,{\rm d}A(z).
	\end{gather*}
	Similarly,
	\begin{gather*}
		\E_N \II_N[\psi_N]
		=N \int_{ \C } \psi (z) \partial Q(p+ r_N z) R_{N}(z) \,{\rm d}A(z),
		\\
		\E_N \III_N[\psi_N]=-\frac{1}{r_N}\int_{ \C } \psi(z) \partial R_{N}(z) \,{\rm d}A(z), \qquad \E_N \RN{4}[\psi_N]= \frac{2}{r_N} \int_{ \C } \psi(z) \frac{ R_{N}(z) }{ z-\bar{z} } \,{\rm d}A(z).
	\end{gather*}	
	Therefore by \eqref{Ward N}, we obtain
	\begin{gather*}
	\int_{ \C } \frac{ R_{N,2}(z,w)}{z-w} \,{\rm d}A(w)
	= \frac{Nr_N}{2} \partial Q(p+r_Nz) R_{N}(z)+\frac{\partial R_N(z)}{2} -\frac{ R_{N}(z) }{ z-\bar{z} }.
	\end{gather*}
	Dividing by $R_N(z)$, this identity is rewritten in terms of Berezin kernel $B_N$ as
	\begin{gather*}
	\int_{ \C } \frac{ B_N(z,w) }{z-w} \,{\rm d}A(w)=\int_{ \C } \frac{R_N(w)}{z-w} \,{\rm d}A(w)
	-\frac{Nr_N}{2} \partial Q(p+r_Nz)
	-\frac{ \partial \log R_N(z)}{2} + \frac{ 1 }{ z-\bar{z} }.
	\end{gather*}
	By taking $\bar{\partial}$ derivative on both sides of this equation, Proposition~\ref{Thm_Ward N} follows.	
\end{proof}

In the next two subsections, we characterise translation-invariant scaling limits of symplectic ensembles, first, by solving the mass-one equation, and second, by the solution of the limiting Ward's equation. This leads to the proof of Theorem~\ref{Thm_TIsol}. By \eqref{kappa Psi} together with \eqref{Psi J hat}, it suffices to show that
\begin{gather}\label{Psi}
	\Psi(z)
	=\frac{1}{2{\rm i}\sqrt{\pi}} \int_E {\rm e}^{-u^2}\big({\rm e}^{ 2{\rm i}uz }-{\rm e}^{ -2{\rm i}uz } \big) \, \frac{{\rm d}u}{u}, \qquad 	\wh{J}(u)
	=\frac{2\sqrt{\pi}}{{\rm i}} \frac{{\rm e}^{-u^2/4}}{u}\mathbbm{1}_{E}(u/2),
\end{gather}
where $E$ is of the form $(-a,a)$ with $a \in [0,\infty]$.

\subsection{Translation-invariant solutions to the mass-one equation}
In this subsection, we prove the following proposition.

\begin{Proposition}\label{Prop_mass1 TIsol}
	Translation-invariant solution to the mass-one equation \eqref{mass-one} are of the form \eqref{Psi}, where $E$ is some symmetric Borel set.
\end{Proposition}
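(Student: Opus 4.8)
The plan is to substitute the translation-invariant ansatz $\kappa(z,w)={\rm e}^{z^2+w^2}\Psi(z-w)$, with $\Psi$ odd, into the mass-one equation in its equivalent form \eqref{mass1 var}, to carry out the Gaussian integral over $w\in\C$, and to reduce the resulting identity to a pointwise equation for the Fourier transform $\wh J$ of \eqref{Psi J hat}. It is convenient to note at the outset that $\kappa(z,\bar z)={\rm e}^{z^2+\bar z^2}\Psi(z-\bar z)$ is well defined on the diagonal since $\Psi$ is entire, and that purity of $\wh J$ (i.e.\ $\wh J\in{\rm i}\R$) need not be assumed --- it will be forced by the pointwise equation below.

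First I would simplify the kernel in \eqref{mass1 var}. From $\overline{\kappa(z,w)}={\rm e}^{\bar z^2+\bar w^2}\overline{\Psi(z-w)}$ one gets $|\kappa(z,w)|^2={\rm e}^{z^2+\bar z^2}{\rm e}^{w^2+\bar w^2}|\Psi(z-w)|^2$, and since ${\rm e}^{-2|w|^2}{\rm e}^{w^2+\bar w^2}={\rm e}^{(w-\bar w)^2}$, writing $w=s+{\rm i}t$ (so that $(w-\bar w)^2=-4t^2$, $\bar w-w=-2{\rm i}t$, ${\rm d}A(w)=\tfrac1\pi\,{\rm d}s\,{\rm d}t$), the equation \eqref{mass1 var} becomes, after cancelling the nonvanishing factor ${\rm e}^{z^2+\bar z^2}$ and putting $z=x+{\rm i}y$,
\[
\Psi(2{\rm i}y)=\frac{-4{\rm i}}{\pi}\int_{\R}\int_{\R} t\,{\rm e}^{-4t^2}\,\bigl|\Psi\bigl((x-s)+{\rm i}(y-t)\bigr)\bigr|^2\,{\rm d}s\,{\rm d}t .
\]
The right-hand side is independent of $x$ by translation invariance of the $s$-integral, consistent with the left-hand side; I set $x=0$ and substitute $\sigma=-s$.

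Next I would insert \eqref{Psi J hat} as $\Psi(\sigma+{\rm i}\tau)=\tfrac1{2\pi}\int_{\R}{\rm e}^{{\rm i}\sigma u}{\rm e}^{-\tau u}\wh J(u)\,{\rm d}u$, so that Plancherel in $\sigma$ gives $\int_{\R}|\Psi(\sigma+{\rm i}\tau)|^2\,{\rm d}\sigma=\tfrac1{2\pi}\int_{\R}{\rm e}^{-2\tau u}|\wh J(u)|^2\,{\rm d}u$. Taking $\tau=y-t$, interchanging the integrals, and using $\int_{\R}t\,{\rm e}^{-4t^2+2tu}\,{\rm d}t=\tfrac{\sqrt\pi}{8}\,u\,{\rm e}^{u^2/4}$, the displayed identity collapses to
\[
\frac{1}{2\pi}\int_{\R}{\rm e}^{-2yu}\,\wh J(u)\,{\rm d}u=\frac{-{\rm i}}{4\pi^{3/2}}\int_{\R}{\rm e}^{-2yu}\,u\,{\rm e}^{u^2/4}\,|\wh J(u)|^2\,{\rm d}u ,\qquad y\in\R .
\]
By injectivity of the two-sided Laplace transform this is equivalent to $\wh J(u)=\tfrac{-{\rm i}}{2\sqrt\pi}\,u\,{\rm e}^{u^2/4}\,|\wh J(u)|^2$ for a.e.\ $u$; since the right-hand side is imaginary, $\wh J={\rm i}h$ with $h$ real and (as $\wh J$ is odd) odd, and the equation becomes $h(u)\bigl(1+\tfrac1{2\sqrt\pi}u\,{\rm e}^{u^2/4}h(u)\bigr)=0$. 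Hence for a.e.\ $u$ either $h(u)=0$ or $h(u)=-\tfrac{2\sqrt\pi}{u}{\rm e}^{-u^2/4}$, so $\wh J(u)=\tfrac{2\sqrt\pi}{{\rm i}}\tfrac{{\rm e}^{-u^2/4}}{u}\,\mathbbm{1}_{E}(u/2)$ with $E:=\tfrac12\{u:h(u)\ne0\}$. The set $E$ is symmetric because $h$ and the branch $u\mapsto-\tfrac{2\sqrt\pi}{u}{\rm e}^{-u^2/4}$ are both odd; inverting the Fourier transform and using symmetry of $E$ recovers the $\Psi$-formula of \eqref{Psi}, which proves the proposition.

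The main difficulty is analytic rather than algebraic: one must justify the Fubini interchanges and the Plancherel step, which is delicate since the nontrivial branch ${\rm e}^{-u^2/4}/u$ is not locally integrable at $u=0$, so when $E$ contains a neighbourhood of the origin all transforms above must be interpreted as principal values / tempered distributions (compatibly with the Gaussian convolution picture of the Remark after Theorem~\ref{Thm_TIsol}), and the two-sided Laplace inversion must be phrased accordingly. Granting the standing hypothesis $\wh J\in L^1\cap L^2$ away from $u=0$, the remaining estimates are routine: the weight $t\,{\rm e}^{-4t^2}$ provides ample decay in $t$, and the Gaussian decay of the Fourier transform of the entire function $\Psi$ controls ${\rm e}^{-\tau u}\wh J(u)$ in $u$, so the computation is legitimate.
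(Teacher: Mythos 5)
Your argument is correct and follows essentially the same route as the paper's proof: reduce the translation-invariant mass-one equation to a bilateral Laplace-transform identity in the Fourier variable of $\Psi$, and solve the resulting pointwise quadratic equation for $\wh J$ to get the indicator-function form with a symmetric set $E$. The one (welcome) refinement is that by using Plancherel you keep the complex conjugate honestly and obtain $|\wh J(u)|^2$ where the paper writes $-\wh J(u)^2$ (implicitly using $\overline{\Psi(\zeta)}=\Psi(\bar\zeta)$, i.e., that $\wh J$ is purely imaginary), so that the purely imaginary character of $\wh J$ is deduced from the equation rather than tacitly assumed.
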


It follows from this proposition that the specified form of the pre-kernel \eqref{kappa t.i.} with any symmetric Borel set satisfies the mass-one equation \eqref{mass-one}.
However, in the following subsection, it will be shown that the if $E$ is not a single interval, then the pre-kernel~\eqref{kappa t.i.} does not satisfy Ward's equation.
(An analogous result for the random normal matrix ensemble was obtained in~\cite{MR4030288}.)

\begin{proof}[Proof of Proposition~\ref{Prop_mass1 TIsol}]
	In terms of $\Psi$, the mass-one equation \eqref{mass-one 1} is written as
	\begin{gather*}
		\Psi(z-\bar{z})=2 \int_{ \C } (\bar{w}-w) {\rm e}^{-4(\im w)^2} |\Psi(z-w)|^2\,{\rm d}A(w).
	\end{gather*}
	By translation invariance, it is equivalent to the fact that for any $t \in \R$,
	\begin{gather} \label{mass1 Psi}
			\Psi(2{\rm i}t)
			=\frac{4}{\pi {\rm i}} \int_{ \R^2 } y \, {\rm e}^{-4y^2} |\Psi( x+{\rm i}(y-t) )|^2\,{\rm d}x\,{\rm d}y.
	\end{gather}
	
	In terms of $\wh{J}$, the equation \eqref{mass1 Psi} is written as
	\begin{gather*}
			\Psi(2{\rm i}t)
			=\frac{1}{\pi^3{\rm i}} \int_{ \R^4 } y \, {\rm e}^{-4y^2} {\rm e}^{{\rm i}(u+v)x} {\rm e}^{-(u-v)(y-t)} \wh{J}(u) \wh{J}(v)\,{\rm d}x\,{\rm d}y\,{\rm d}u\,{\rm d}v .
	\end{gather*}
	Since
	\begin{gather*}
	\frac{1}{2\pi } \int_\R {\rm e}^{{\rm i}(u+v)x}\,{\rm d}x=\delta(u+v),
	\end{gather*}
	the above equation is simplified to
	\begin{gather*}
			\Psi(2{\rm i}t)=\frac{2}{\pi^2{\rm i}} \int_{ \R^2 } y {\rm e}^{-4y^2} {\rm e}^{-2u(y-t)} \wh{J}(u) \wh{J}(-u)\,{\rm d}y\,{\rm d}u
		= \frac{2{\rm i}}{\pi^2} \int_{ \R^2 } y {\rm e}^{-4y^2} {\rm e}^{-2u(y-t)} \wh{J}(u)^2\,{\rm d}y\,{\rm d}u.
		\end{gather*}
	Moreover, it follows from
	\begin{gather*}
		\int_{\R} y {\rm e}^{-4y^2-2uy}\,{\rm d}y=-\frac{\sqrt{\pi}}{8} \, u \, {\rm e}^{u^2/4}
	\end{gather*}
	that the mass-one equation holds if and only if
	\begin{gather} \label{mass1 var1}
		f(t)=g(t),
	\end{gather}
	where
	\begin{gather*}
		f(t):=\int_{\R} {\rm e}^{-2ut} \wh{J}(u)\,{\rm d}u,
		\qquad
		g(t):=\frac{{\rm i}}{2\sqrt{\pi} } \int_{ \R } {\rm e}^{-2ut} \, u \, {\rm e}^{u^2/4} \wh{J}(u)^2\,{\rm d}u.
	\end{gather*}
	Therefore we can readily see that the kernel \eqref{kappa t.i.} satisfies the mass-one equation.
	
	Since $\wh{J} \in L^1 \cap L^2$, we can extend the domain of $f$, $g$ to the complex plane. Then by \eqref{mass1 var1} we have $f(z)=g(z)$ for $z\in\C$, which in particular leads to $f({\rm i}t)=g({\rm i}t)$ for $t \in \R$, i.e.,
	\begin{gather*}
		\int_{\R} {\rm e}^{-2{\rm i}ut} \wh{J}(u)\,{\rm d}u=\frac{{\rm i}}{2\sqrt{\pi} } \int_{ \R } {\rm e}^{-2{\rm i}ut} u {\rm e}^{u^2/4} \wh{J}(u)^2\,{\rm d}u.
	\end{gather*}
	By Fourier inversion, this is equivalent to
	\begin{gather*}
		\wh{J}(u)\left( 1+\frac{u {\rm e}^{u^2/4}}{2{\rm i}\sqrt{\pi} } \wh{J}(u) \right)=0,\qquad \text{a.e.}
	\end{gather*}
It gives rise to
	\begin{gather*}
		\wh{J}(u)=\frac{2\sqrt{\pi}}{{\rm i}} \frac{{\rm e}^{-u^2/4}}{u}\mathbbm{1}_{E}(u/2), \qquad \text{a.e.}
	\end{gather*}
	for some Borel set $E,$ which is symmetric with respect to the origin since $\wh{J}$ is odd. Here the argument $u/2$ is merely used for convenience.
\end{proof}

\subsection{Translation-invariant solution of Ward's equation}
In this subsection, we characterise the translation-invariant solutions to Ward's equation \eqref{Ward} and complete the proof of Theorem~\ref{Thm_TIsol}.

Recall that by \eqref{kappa Psi}, we have
\begin{gather*}
R(z) =-2{\rm i} \im z {\rm e}^{-4 (\im z)^2} \Psi(z-\bar{z}),
\\
B(z,w) =-2{\rm i} \im w\, {\rm e}^{ -4(\im w)^2 } \frac{ |\Psi(z-w)|^2-|\Psi(z-\bar{w})|^2 }{ \Psi(z-\bar{z})}.
\end{gather*}
Let us first observe the following.

\begin{Lemma}\label{Lem_Ward var1}
	Ward's equation \eqref{Ward} holds if and only if
	\begin{gather*}
 \Psi \Psi''(z-\bar{z}) - \Psi'(z-\bar{z})^2+\dfrac{\Psi(z-\bar{z})^2}{(z-\bar{z})^2}
			\\
\qquad{} = \frac{4}{{\rm i}} \Psi(z-\bar{z})\int_\C \im w\, {\rm e}^{ -4(\im w)^2 } \Psi(z-w) \overline{ \Psi'(z-w) } \left( \frac{1}{z-w}+\frac{1}{z-\bar{w}} \right) {\rm d}A(w)
			\\
\qquad\quad{} + \frac{4}{{\rm i}} \Psi'(z-\bar{z}) \int_\C \im w\, {\rm e}^{ -4(\im w)^2 } |\Psi(z-w)|^2 \left( \frac{1}{z-w}+\frac{1}{z-\bar{w}} \right) {\rm d}A(w).
		\end{gather*}
\end{Lemma}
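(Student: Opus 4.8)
The plan is to massage Ward's equation \eqref{Ward} into the stated integro-differential identity through a chain of \emph{equivalences}, starting from the explicit formulas for $R$, $B$ and $C$ in terms of $\Psi$ recorded just above the Lemma. Throughout I would use that $\Psi$ is odd and that, being entire and real on $\R$ (equivalently, in the Fourier picture \eqref{Psi J hat}--\eqref{Psi}, that $\wh J$ is purely imaginary and odd), it satisfies $\overline{\Psi(\zeta)}=\Psi(\bar\zeta)$; hence $|\Psi(z-w)|^2=\Psi(z-w)\Psi(\bar z-\bar w)$, and on the imaginary axis $\overline{\Psi(z-\bar z)}=\Psi(\bar z-z)=-\Psi(z-\bar z)$, so that
\begin{gather*}
|\Psi(z-\bar z)|^2=-\Psi(z-\bar z)^2 .
\end{gather*}
I would also abbreviate $\mu(w):=\im w\cdot {\rm e}^{-4(\im w)^2}$, so that $\mu(\bar w)=-\mu(w)$ and $R(z)=-2{\rm i}\,\mu(z)\,\Psi(z-\bar z)$.

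First I would rewrite $C$. Inserting the formula for $B(z,w)$ into $C(z)=\int_\C B(z,w)/(z-w)\,{\rm d}A(w)$ and changing variables $w\mapsto\bar w$ in the part of the integrand carrying $|\Psi(z-\bar w)|^2$ (which picks up a sign from $\mu(\bar w)=-\mu(w)$) recombines the two pieces into
\begin{gather*}
C(z)=\frac{-2{\rm i}}{\Psi(z-\bar z)}\,I(z),\qquad I(z):=\int_\C\mu(w)\,|\Psi(z-w)|^2\Bigl(\frac1{z-w}+\frac1{z-\bar w}\Bigr){\rm d}A(w),
\end{gather*}
the integral converging absolutely since the Gaussian factor in $\mu$ controls infinity and $\Psi(0)=0$ tames the poles of $(z-w)^{-1}$ and $(z-\bar w)^{-1}$. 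Next I would dispose of the local terms of \eqref{Ward}: by translation invariance $R$ depends only on $y:=\im z$, so $\Delta\log R=\tfrac14\partial_y^2\log R$, and inserting $R(z)=-2{\rm i}y\,{\rm e}^{-4y^2}\Psi(2{\rm i}y)$ together with $\partial_y^2\log\Psi(2{\rm i}y)=-4\bigl((\Psi\Psi''-\Psi'^2)/\Psi^2\bigr)(2{\rm i}y)$ gives, after the two constants cancel and after multiplying by the factor $2\Psi(z-\bar z)^2$ (nonzero off $\R$, where $R\equiv0$), that \eqref{Ward} is equivalent to
\begin{gather*}
\Psi\Psi''(z-\bar z)-\Psi'(z-\bar z)^2+\frac{\Psi(z-\bar z)^2}{(z-\bar z)^2}=2\Psi(z-\bar z)^2\bigl(\bp C(z)-R(z)\bigr) .
\end{gather*}
This is exactly the Lemma's left-hand side, so it remains to verify, \emph{unconditionally}, that $2\Psi(z-\bar z)^2(\bp C(z)-R(z))$ equals its right-hand side.

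For that last step I would compute $\bp C$ from $C=-2{\rm i}\,I/\Psi(z-\bar z)$ using $\bp_z\Psi(z-\bar z)=-\Psi'(z-\bar z)$, obtaining $2\Psi(z-\bar z)^2\bp C(z)=-4{\rm i}\bigl[\Psi(z-\bar z)\bp I(z)+\Psi'(z-\bar z)I(z)\bigr]$, and then differentiate $I$ under the integral sign: $\bp_z$ on $|\Psi(z-w)|^2=\Psi(z-w)\Psi(\bar z-\bar w)$ yields $\Psi(z-w)\overline{\Psi'(z-w)}$, while $\bp_z\bigl[(z-w)^{-1}+(z-\bar w)^{-1}\bigr]$ contributes Dirac masses at $w=z$ and $w=\bar z$, evaluating to $\mu(z)|\Psi(0)|^2=0$ and $\mu(\bar z)|\Psi(z-\bar z)|^2=-\mu(z)|\Psi(z-\bar z)|^2$. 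Plugging the resulting expression for $\bp I$ back in and using $-4{\rm i}=4/{\rm i}$, the two integral terms reproduce precisely the two integrals in the Lemma's right-hand side, while the leftover boundary term $+4{\rm i}\,\mu(z)\Psi(z-\bar z)|\Psi(z-\bar z)|^2$ combines with $-2\Psi(z-\bar z)^2R(z)=4{\rm i}\,\mu(z)\Psi(z-\bar z)^3$ and cancels by the identity $|\Psi(z-\bar z)|^2=-\Psi(z-\bar z)^2$; reading the chain of equivalences backwards yields the ``if and only if''.

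The main obstacle I expect is precisely this last step: justifying that $\bp_z$ may be taken inside the singular integral defining $I$ and that $\bp_z(z-w)^{-1}$ legitimately contributes the point-evaluation terms (a distributional argument, made harmless by the fact that $\Psi(0)=0$ renders the integrand locally bounded), together with the careful bookkeeping of the conjugation symmetry $\overline{\Psi(\zeta)}=\Psi(\bar\zeta)$, on which the final cancellation entirely rests. Everything else — Step~1's symmetrization and Step~2's computation of $\Delta\log R$ — is routine algebra.
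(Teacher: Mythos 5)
Your proposal is correct and follows essentially the same route as the paper: both compute $\tfrac12\Delta\log R$ from the explicit formula $R(z)=-2{\rm i}\,\im z\,{\rm e}^{-4(\im z)^2}\Psi(z-\bar z)$, symmetrize the integral via $w\mapsto\bar w$, and differentiate $|\Psi(z-w)|^2$ to produce $\Psi(z-w)\overline{\Psi'(z-w)}$, with the reflection identity $|\Psi(z-\bar z)|^2=-\Psi(z-\bar z)^2$ effecting the final cancellation. The only (cosmetic) difference is bookkeeping of the distributional term: the paper extracts it up front via $\bp C(z)=\int_\C\frac{\bp_z B(z,w)}{z-w}\,{\rm d}A(w)+B(z,z)$ with $B(z,z)=R(z)$, whereas you symmetrize first and collect the point masses at $w=z$ and $w=\bar z$ afterwards.
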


\begin{proof}Note that
\begin{align*}
\bp_z C(z)& =\int_\C \frac{\bp_z B(z,w)}{z-w}\,{\rm d}A(w)+\int B(z,w) \bp_z \left( \frac{1}{z-w} \right) {\rm d}A(w)\\
&=\int_\C \frac{\bp_z B(z,w)}{z-w}\,{\rm d}A(w)+B(z,z).
\end{align*}
Since $B(z,z)=R(z)$, Ward's equation \eqref{Ward} is equivalent to
\begin{gather*}
\int_\C \frac{\bp_z B(z,w)}{z-w}\,{\rm d}A(w)=-1-\dfrac12 \Delta \log R(z)+\dfrac{1}{(z-\bar{z})^2}.
\end{gather*}
	Notice also that
	\begin{gather*}
			\dfrac12 \Delta \log R(z) =\frac12 \Delta \log \big( \im z {\rm e}^{-4 (\im\,z)^2} \Psi(z-\bar{z}) \big)
			=\dfrac{1}{2(z-\bar{z})^2}-1 +\frac12 \Delta \log \Psi(z-\bar{z}).
		\end{gather*}
	Thus one can rewrite above equation as
	\begin{gather*}
			\int_\C \frac{\bp_z B(z,w)}{z-w}\,{\rm d}A(w)
			 =\frac12 \frac{ \Psi''(z-\bar{z}) }{ \Psi(z-\bar{z}) }-\frac12 \left( \frac{\Psi'(z-\bar{z})}{\Psi(z-\bar{z})} \right)^2+\dfrac{1}{2(z-\bar{z})^2}.
		\end{gather*}
	Using the change of variable $w \mapsto \bar{w}$, the left-hand side of this equation is computed as
	\begin{gather*}
 \frac{2}{{\rm i}} \int_\C \frac{ \im w\, {\rm e}^{ -4(\im w)^2 }}{z-w} \bp_z \left( \frac{ |\Psi(z-w)|^2-|\Psi(z-\bar{w})|^2 }{ \Psi(z-\bar{z})} \right) {\rm d}A(w)
			\\
\qquad{}= \frac{2}{{\rm i}} \frac{1}{\Psi(z-\bar{z})}\int_\C \im w\, {\rm e}^{ -4(\im\,w)^2 } \Psi(z-w) \overline{ \Psi'(z-w) } \left( \frac{1}{z-w}+\frac{1}{z-\bar{w}} \right) {\rm d}A(w)
			\\
\qquad\quad{}+ \frac{2}{{\rm i}} \frac{ \Psi'(z-\bar{z}) }{ \Psi(z-\bar{z})^2 } \int_\C \im w\, {\rm e}^{ -4(\im w)^2 } |\Psi(z-w)|^2 \left( \frac{1}{z-w}+\frac{1}{z-\bar{w}} \right) {\rm d}A(w).
		\end{gather*}
	Thus Ward's equation is rewritten as
	\begin{gather*}
\frac12 \frac{ \Psi''(z-\bar{z}) }{ \Psi(z-\bar{z}) }-\frac12 \left( \frac{\Psi'(z-\bar{z})}{\Psi(z-\bar{z})} \right)^2+\dfrac{1}{2(z-\bar{z})^2}
			\\
\qquad{} = \frac{2}{{\rm i}} \frac{1}{\Psi(z-\bar{z})}\int_\C \im w\, {\rm e}^{ -4(\im w)^2 } \Psi(z-w) \overline{ \Psi'(z-w) } \left( \frac{1}{z-w}+\frac{1}{z-\bar{w}} \right) {\rm d}A(w)
			\\
\qquad\quad{} + \frac{2}{{\rm i}} \frac{ \Psi'(z-\bar{z}) }{ \Psi(z-\bar{z})^2 } \int_\C \im w\, {\rm e}^{ -4(\im w)^2 } |\Psi(z-w)|^2 \left( \frac{1}{z-w}+\frac{1}{z-\bar{w}} \right) {\rm d}A(w).\tag*{\qed}
\end{gather*}
\renewcommand{\qed}{}
\end{proof}

Let us write
\begin{gather*}
	L(t) :=\frac{4}{\pi {\rm i}} \int_{\R^2} y {\rm e}^{ -4y^2 } |\Psi({\rm i}t-x-{\rm i}y)|^2 \left( \frac{1}{{\rm i}t-x-{\rm i}y}+\frac{1}{{\rm i}t-x+{\rm i}y} \right) {\rm d}x\,{\rm d}y,
	\\
	K(t) :=\frac{4}{\pi {\rm i}} \int_{\R^2} y {\rm e}^{ -4y^2 } \Psi({\rm i}t-x-{\rm i}y) \overline{ \Psi'({\rm i}t-x-{\rm i}y) } \left( \frac{1}{{\rm i}t-x-{\rm i}y}+\frac{1}{{\rm i}t-x+{\rm i}y} \right) {\rm d}x\,{\rm d}y.
\end{gather*}
We now show the following lemma, the key reduction of Ward's equation.

\begin{Lemma}\label{Lem_Ward var2}
	Ward's equation \eqref{Ward} holds if and only if
	\begin{gather*} 
		\frac{{\rm d}}{{\rm d}t} \left[ \frac{L(t)}{\Psi(2{\rm i}t)} \right] = -\frac{{\rm d}}{{\rm d}t}\left[ \frac{\Psi'(2{\rm i}t)}{\Psi(2{\rm i}t)}+\frac{{\rm i}}{2t} \right]-8t {\rm e}^{-4t^2} \Psi(2{\rm i}t).
	\end{gather*}
\end{Lemma}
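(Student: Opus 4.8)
The plan is to reduce Ward's equation~\eqref{Ward}, written in the form of Lemma~\ref{Lem_Ward var1}, to a single ordinary differential equation along the imaginary axis, and then to match it with the stated identity by elementary manipulations together with one unconditional identity for~$L$. First, since the pre-kernel~\eqref{kappa Psi} is horizontally translation invariant, every term occurring in Lemma~\ref{Lem_Ward var1} depends on~$z$ only through $z-\bar z$; for the two integrals this is seen from the substitution $w\mapsto w+\re z$, which leaves $\im w$, ${\rm e}^{-4(\im w)^2}$ and $\tfrac1{z-w}+\tfrac1{z-\bar w}$ invariant up to replacing $z$ by $z-\re z$. Hence it suffices to verify the equation of Lemma~\ref{Lem_Ward var1} at $z={\rm i}t$, $t\in\R$. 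Writing $w=x+{\rm i}y$ and ${\rm d}A(w)=\tfrac1\pi{\rm d}x\,{\rm d}y$, the first term on the right-hand side of Lemma~\ref{Lem_Ward var1} equals $\Psi(2{\rm i}t)K(t)$ and the second equals $\Psi'(2{\rm i}t)L(t)$, so Ward's equation is equivalent to
\begin{gather*}
	\Psi(2{\rm i}t)\,\Psi''(2{\rm i}t)-\Psi'(2{\rm i}t)^2-\frac{\Psi(2{\rm i}t)^2}{4t^2}=\Psi(2{\rm i}t)K(t)+\Psi'(2{\rm i}t)L(t).\tag{$\ast$}
\end{gather*}

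Next I would turn $(\ast)$ into a statement about total $t$-derivatives. Writing $P=\Psi(2{\rm i}t)$, so that $\tfrac{{\rm d}}{{\rm d}t}P=2{\rm i}\Psi'(2{\rm i}t)$, dividing $(\ast)$ by $P^2$ and using $\tfrac{\Psi''(2{\rm i}t)P-\Psi'(2{\rm i}t)^2}{P^2}=\tfrac1{2{\rm i}}\tfrac{{\rm d}}{{\rm d}t}\bigl(\tfrac{\Psi'(2{\rm i}t)}{P}\bigr)$, $-\tfrac1{4t^2}=\tfrac1{2{\rm i}}\tfrac{{\rm d}}{{\rm d}t}\bigl(\tfrac{{\rm i}}{2t}\bigr)$ and $\tfrac{2{\rm i}\Psi'(2{\rm i}t)}{P^2}=-\tfrac{{\rm d}}{{\rm d}t}\tfrac1P$, a short rearrangement gives
\begin{gather*}
	\frac{{\rm d}}{{\rm d}t}\!\left[\frac{L(t)}{P}+\frac{\Psi'(2{\rm i}t)}{P}+\frac{{\rm i}}{2t}\right]=\frac{2{\rm i}K(t)+L'(t)}{P}.\tag{$\ast\ast$}
\end{gather*}
On the other hand, the assertion of Lemma~\ref{Lem_Ward var2}, after moving the derivative on its right-hand side to the left, reads exactly $\tfrac{{\rm d}}{{\rm d}t}\bigl[\tfrac{L}{P}+\tfrac{\Psi'(2{\rm i}t)}{P}+\tfrac{{\rm i}}{2t}\bigr]=-8t{\rm e}^{-4t^2}P$. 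Comparing this with $(\ast\ast)$, Ward's equation is equivalent to Lemma~\ref{Lem_Ward var2} precisely when
\begin{gather*}
	L'(t)=-2{\rm i}K(t)-8t{\rm e}^{-4t^2}\Psi(2{\rm i}t)^2.\tag{$\dagger$}
\end{gather*}

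So the substance of the proof is to establish $(\dagger)$ unconditionally, by differentiating the defining integral of~$L$ in~$t$. Two kinds of contribution appear. Differentiating the weight $|\Psi({\rm i}t-w)|^2=\Psi({\rm i}t-w)\,\Psi(\overline{{\rm i}t-w})$, using that $\Psi$ has real Taylor coefficients (so $\overline{\Psi(\zeta)}=\Psi(\bar\zeta)$), and then integrating by parts in~$w$ -- using $\tfrac1{({\rm i}t-w)^2}=\partial_w\tfrac1{{\rm i}t-w}$ and $\tfrac1{({\rm i}t-\bar w)^2}=\partial_{\bar w}\tfrac1{{\rm i}t-\bar w}$, neither of which carries a Dirac mass -- reorganises into the term $-2{\rm i}K(t)$. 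The singularities of $\tfrac1{{\rm i}t-w}$ and $\tfrac1{{\rm i}t-\bar w}$, however, move with~$t$: writing $L(t)=\tfrac4{{\rm i}}\bigl(\widehat g_t({\rm i}t)+\check g_t({\rm i}t)\bigr)$ with $\widehat g_t(a)=\int_\C\tfrac{g_t(w)}{a-w}\,{\rm d}A(w)$, $\check g_t(a)=\int_\C\tfrac{g_t(w)}{a-\bar w}\,{\rm d}A(w)$ and $g_t(w)=\im w\,{\rm e}^{-4(\im w)^2}|\Psi({\rm i}t-w)|^2$, and using the Cauchy--Pompeiu relations $\partial_{\bar a}\widehat g_t(a)=g_t(a)$ and $\partial_{\bar a}\check g_t(a)=g_t(\bar a)$, the motion of the evaluation point produces a point evaluation of $g_t$ at $w=-{\rm i}t$ (the corresponding one at $w={\rm i}t$ vanishes since $\Psi(0)=0$). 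Because $\im(-{\rm i}t)=-t$ and $|\Psi(2{\rm i}t)|^2=-\Psi(2{\rm i}t)^2$ (since $\overline{\Psi(2{\rm i}t)}=\Psi(-2{\rm i}t)=-\Psi(2{\rm i}t)$), this evaluation yields the remaining term proportional to $t{\rm e}^{-4t^2}\Psi(2{\rm i}t)^2$, and collecting everything gives $(\dagger)$. Both directions of the ``if and only if'' then follow because every step above is an equivalence; as a consistency check, $(\dagger)$ is compatible with the mass-one relation~\eqref{mass1 Psi}.

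I expect Step 3 -- the proof of $(\dagger)$ -- to be the main obstacle: one must keep track carefully of which distributional derivatives of the Cauchy-type kernels $\tfrac1{{\rm i}t\mp w}$, $\tfrac1{{\rm i}t\mp\bar w}$ carry Dirac masses and which do not, and then combine the interior integration-by-parts contributions with the point-evaluation coming from the moving singularity so that all constants assemble exactly into $(\dagger)$.
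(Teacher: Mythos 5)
Your reduction is logically the same as the paper's: starting from Lemma~\ref{Lem_Ward var1} at $z-\bar z=2{\rm i}t$, dividing by $\Psi(2{\rm i}t)^2$ and recognising total $t$-derivatives, everything hinges on the single unconditional identity $L'(t)=-2{\rm i}K(t)-8t{\rm e}^{-4t^2}\Psi(2{\rm i}t)^2$, which is exactly the paper's relation \eqref{L K rel} with $M(t)=-8t{\rm e}^{-4t^2}\Psi(2{\rm i}t)^2$. Where you differ is in how this identity is proved: the paper passes to Fourier space, writes $L$ and $K$ as explicit double integrals in $(u,v)$ against $\wh J(u)\wh J(v)$ involving complementary error functions, and verifies the relation by differentiating those closed forms term by term; you propose a direct $w$-plane argument via differentiation under the integral sign and the $\bar\partial$-identity for the Cauchy kernel. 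Your route is more conceptual and avoids the erfc computations, which is a genuine gain, and your closing consistency remark with \eqref{mass1 Psi} is apt.

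However, the sketch of the key step $(\dagger)$ contains a concrete bookkeeping error: the Dirac masses do not all sit where you put them. At $z={\rm i}t$, the moving evaluation point of $\check g_t(a)=\int g_t(w)(a-\bar w)^{-1}\,{\rm d}A(w)$ contributes $\frac{4}{{\rm i}}\cdot(-{\rm i})g_t(-{\rm i}t)=-4t{\rm e}^{-4t^2}\Psi(2{\rm i}t)^2$ --- only \emph{half} of $M(t)$. The other half comes from the integration by parts of the $\Psi'({\rm i}t-w)\overline{\Psi({\rm i}t-w)}$ piece of $\partial_t g_t$: writing ${\rm i}\Psi'({\rm i}t-w)=-{\rm i}\,\partial_w\Psi({\rm i}t-w)$ and moving $\partial_w$ onto the remaining factors, the derivative necessarily also hits $({\rm i}t-\bar w)^{-1}$, and $\partial_w({\rm i}t-\bar w)^{-1}=-\pi\delta^{(2)}(w+{\rm i}t)$ \emph{does} carry a Dirac mass at $w=-{\rm i}t$; your parenthetical only rules out Dirac masses for $\partial_w({\rm i}t-w)^{-1}$ and $\partial_{\bar w}({\rm i}t-\bar w)^{-1}$, which is not enough. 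Evaluating that delta with $\im(-{\rm i}t)=-t$ and $\overline{\Psi(2{\rm i}t)}=-\Psi(2{\rm i}t)$ yields the remaining $-4t{\rm e}^{-4t^2}\Psi(2{\rm i}t)^2$. So $(\dagger)$ is true and your method can prove it, but followed literally your accounting produces $-4t{\rm e}^{-4t^2}\Psi(2{\rm i}t)^2$ in place of $-8t{\rm e}^{-4t^2}\Psi(2{\rm i}t)^2$; once the second delta is included the constants assemble correctly and the rest of your argument goes through.
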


\begin{proof}
	
	By Lemma~\ref{Lem_Ward var1}, Ward's equation holds if and only if for any $t \in \R,$
	\begin{gather} \label{Ward var1}
		\Psi \Psi''(2{\rm i}t) - \Psi'(2{\rm i}t)^2-\dfrac{\Psi(2{\rm i}t)^2}{4t^2}= \Psi(2{\rm i}t) K(t)+\Psi'(2{\rm i}t) L(t).
	\end{gather}
	Note that
	\begin{gather*}
		\Psi(z)=\frac{1}{2\pi} \int_{\R} {\rm e}^{{\rm i}zu} \wh{J}(u)\,{\rm d}u, \qquad \Psi'(z)=\frac{{\rm i}}{2\pi} \int_{\R} u\,{\rm e}^{{\rm i}zu} \wh{J}(u)\,{\rm d}u.
	\end{gather*}
	Using this, we have
	\begin{gather*}
			L(t) =\frac{1}{\pi^3 {\rm i}} \int_{ \R^4 } y {\rm e}^{ -4y^2 } {\rm e}^{{\rm i}(u+v)x} {\rm e}^{-(u-v)(y-t)} \frac{\wh{J}(u) \wh{J}(v)}{{\rm i}t-x-{\rm i}y} \,{\rm d}x\,{\rm d}y\,{\rm d}u\,{\rm d}v
			\\
\hphantom{L(t) =}{} +\frac{1}{\pi^3 {\rm i}} \int_{ \R^4 } y {\rm e}^{ -4y^2 } {\rm e}^{{\rm i}(u+v)x} {\rm e}^{-(u-v)(y-t)} \frac{\wh{J}(u) \wh{J}(v)}{{\rm i}t-x+{\rm i}y} \,{\rm d}x\,{\rm d}y\,{\rm d}u\,{\rm d}v
		\end{gather*}
	and
	\begin{gather*}
			K(t) =-\frac{1}{\pi^3 } \int_{ \R^4 } y {\rm e}^{ -4y^2 } {\rm e}^{{\rm i}(u+v)x} {\rm e}^{-(u-v)(y-t)} v \frac{\wh{J}(u) \wh{J}(v)}{{\rm i}t-x-{\rm i}y} \,{\rm d}x\,{\rm d}y\,{\rm d}u\,{\rm d}v
			\\
\hphantom{K(t) =}{}
 -\frac{1}{\pi^3 } \int_{ \R^4 } y \, {\rm e}^{ -4y^2 } {\rm e}^{{\rm i}(u+v)x} {\rm e}^{-(u-v)(y-t)} v \frac{\wh{J}(u) \wh{J}(v)}{{\rm i}t-x+{\rm i}y} \,{\rm d}x\,{\rm d}y\,{\rm d}u\,{\rm d}v.
		\end{gather*}
	
	Let us first compute $L(t)$. It follows from the well-known Fourier transform of the following rational function that
	\begin{gather*}
			-\frac{1}{2\pi {\rm i}}\int_\R \frac{{\rm e}^{{\rm i}(u+v)x}}{{\rm i}t-x-{\rm i}y} \,{\rm d}x =\sgn(u+v) \mathbbm{1}_{ \{ (u+v)(y-t)<0\} } {\rm e}^{(u+v)(y-t)}.
		\end{gather*}
	This leads to
	\begin{gather*}
L(t) =-\frac{2}{\pi^2} \int_{ \R^3 } \sgn(u+v)\mathbbm{1}_{ \{ (u+v)(y-t)<0\} } y {\rm e}^{ -4y^2+2vy } {\rm e}^{-2vt} \wh{J}(u) \wh{J}(v) \,{\rm d}y\,{\rm d}u\,{\rm d}v\\
\hphantom{L(t) =}{}
 -\frac{2}{\pi^2} \int_{ \R^3 } \sgn(u+v) \mathbbm{1}_{ \{ (u+v)(y+t)>0\} } y {\rm e}^{ -4y^2-2uy } {\rm e}^{-2vt} \wh{J}(u) \wh{J}(v) \,{\rm d}y\,{\rm d}u\,{\rm d}v.
\end{gather*}
Notice here that
\begin{gather*}
\int_{-\infty}^t y {\rm e}^{-4y^2+2vy}\,{\rm d}y
			 =\frac{\sqrt{\pi}}{ 16 }v {\rm e}^{ \frac{v^2}{4} } \erfc\left( \frac{v}{2}-2t \right)-\frac18 {\rm e}^{-4t^2+2vt},
			\\
\int_t^\infty y {\rm e}^{-4y^2+2vy} {\rm d}y
=\frac{\sqrt{\pi}}{ 16 }v {\rm e}^{ \frac{v^2}{4} } \erfc\left( 2t-\frac{v}{2} \right)+\frac18 {\rm e}^{-4t^2+2vt}
		\end{gather*}
	and
\begin{gather*}
\int_{-t}^\infty y {\rm e}^{-4y^2-2uy}\,{\rm d}y
=-\frac{\sqrt{\pi}}{ 16 }u {\rm e}^{ \frac{u^2}{4} } \erfc\left( \frac{u}{2}-2t \right)+\frac18 {\rm e}^{-4t^2+2ut},\\
\int_{-\infty}^{-t} y {\rm e}^{-4y^2-2uy}\,{\rm d}y
=-\frac{\sqrt{\pi}}{ 16 }u {\rm e}^{ \frac{u^2}{4} } \erfc\left( 2t-\frac{u}{2} \right)-\frac18 {\rm e}^{-4t^2+2ut}.
		\end{gather*}
	Using this, the function $L$ is computed as
	\begin{gather*}
			L(t)=\frac{1}{8\pi^2} \int_{ u+v<0 } \left(\sqrt{\pi}v {\rm e}^{ \frac{v^2}{4}-2vt } \erfc\left( 2t-\frac{v}{2} \right)+2 {\rm e}^{-4t^2}\right) \wh{J}(u) \wh{J}(v) \,{\rm d}u\,{\rm d}v
			\\
\hphantom{L(t)=}{}
 +\frac{1}{8\pi^2} \int_{ u+v>0 } \left(-\sqrt{\pi}v {\rm e}^{ \frac{v^2}{4}-2vt } \erfc\left( \frac{v}{2}-2t \right)+2 {\rm e}^{-4t^2}\right) \wh{J}(u) \wh{J}(v) \,{\rm d}u\,{\rm d}v
			\\
\hphantom{L(t)=}{} +\frac{1}{8\pi^2} \int_{ u+v>0 } \left(\sqrt{\pi}u {\rm e}^{ \frac{u^2}{4}-2vt } \erfc\left( \frac{u}{2}-2t \right)-2 {\rm e}^{-4t^2+2(u-v)t}\right) \wh{J}(u) \wh{J}(v) \,{\rm d}u\,{\rm d}v
			\\
\hphantom{L(t)=}{} +\frac{1}{8\pi^2} \int_{ u+v<0 } \left(-\sqrt{\pi}u {\rm e}^{ \frac{u^2}{4}-2vt } \erfc\left( 2t-\frac{u}{2} \right)-2 {\rm e}^{-4t^2+2(u-v)t}\right) \wh{J}(u) \wh{J}(v) \,{\rm d}u\,{\rm d}v.	
		\end{gather*}
	Similarly, we have
	\begin{gather*}
			K(t) =\frac{-{\rm i}}{8\pi^2} \int_{ u+v<0 } v\left(\sqrt{\pi}v {\rm e}^{ \frac{v^2}{4}-2vt } \erfc\left( 2t-\frac{v}{2} \right)+2 {\rm e}^{-4t^2}\right) \wh{J}(u) \wh{J}(v) \,{\rm d}u\,{\rm d}v
			\\
\hphantom{K(t) =}{}
 +\frac{{\rm i}}{8\pi^2} \int_{ u+v>0 } v\left(\sqrt{\pi}v {\rm e}^{ \frac{v^2}{4}-2vt } \erfc\left( \frac{v}{2}-2t \right)-2 {\rm e}^{-4t^2}\right) \wh{J}(u) \wh{J}(v) \,{\rm d}u\,{\rm d}v
			\\
\hphantom{K(t) =}{} -\frac{{\rm i}}{8\pi^2} \int_{ u+v>0 } v\left(\sqrt{\pi}u {\rm e}^{ \frac{u^2}{4}-2vt } \erfc\left( \frac{u}{2}-2t \right)-2 {\rm e}^{-4t^2+2(u-v)t}\right) \wh{J}(u) \wh{J}(v) \,{\rm d}u\,{\rm d}v
			\\
\hphantom{K(t) =}{} +\frac{{\rm i}}{8\pi^2} \int_{ u+v<0 } v\left(\sqrt{\pi}u {\rm e}^{ \frac{u^2}{4}-2vt } \erfc\left( 2t-\frac{u}{2} \right)+2 {\rm e}^{-4t^2+2(u-v)t}\right) \wh{J}(u) \wh{J}(v) \,{\rm d}u\,{\rm d}v.
		\end{gather*}
	
	Observe here that
	\begin{gather*}
	\frac{{\rm d}}{{\rm d}t} \left(\sqrt{\pi}v {\rm e}^{ \frac{v^2}{4}-2vt } \erfc\left( 2t-\frac{v}{2} \right)+2 {\rm e}^{-4t^2}\right)\\
\qquad{} =-2v \left(\sqrt{\pi}v {\rm e}^{ \frac{v^2}{4}-2vt } \erfc\left( 2t-\frac{v}{2} \right)+2 {\rm e}^{-4t^2}\right)-16t {\rm e}^{-4t^2}
	\end{gather*}
	and
	\begin{gather*}
	\frac{{\rm d}}{{\rm d}t} \left(\sqrt{\pi}v {\rm e}^{ \frac{v^2}{4}-2vt } \erfc\left( \frac{v}{2}-2t \right)-2 {\rm e}^{-4t^2}\right)\\
\qquad{}
 =-2v \left(\sqrt{\pi}v {\rm e}^{ \frac{v^2}{4}-2vt } \erfc\left( \frac{v}{2}-2t \right)-2 {\rm e}^{-4t^2}\right)+16t {\rm e}^{-4t^2}.
	\end{gather*}
	Similarly, we have
	\begin{gather*}
 \frac{{\rm d}}{{\rm d}t} \left(\sqrt{\pi}u {\rm e}^{ \frac{u^2}{4}-2vt } \erfc\left( \frac{u}{2}-2t \right)-2 {\rm e}^{-4t^2+2(u-v)t}\right)
		\\
\qquad{}=-2v \left(\sqrt{\pi}u {\rm e}^{ \frac{u^2}{4}-2vt } \erfc\left( \frac{u}{2}-2t \right)-2 {\rm e}^{-4t^2+2(u-v)t}\right)+16t{\rm e}^{-4t^2+2(u-v)t}
	\end{gather*}
	and
	\begin{gather*}
 \frac{{\rm d}}{{\rm d}t} \left(\sqrt{\pi}u {\rm e}^{ \frac{u^2}{4}-2vt } \erfc\left( 2t-\frac{u}{2} \right)+2 {\rm e}^{-4t^2+2(u-v)t}\right)
		\\
\qquad=-2v \left(\sqrt{\pi}u {\rm e}^{ \frac{u^2}{4}-2vt } \erfc\left( 2t-\tfrac{u}{2} \right)+2 {\rm e}^{-4t^2+2(u-v)t}\right) -16t{\rm e}^{-4t^2+2(u-v)t}.
	\end{gather*}
	
	Combining the above equations, we obtain
	\begin{gather} \label{L K rel}
		L'(t)=-2{\rm i} K(t)+M(t),
	\end{gather}
	where
	\begin{gather*}
			M(t)
			:=\frac{2t {\rm e}^{-4t^2}}{\pi^2} \int_{\R^2 } {\rm e}^{2(u-v)t} \wh{J}(u) \wh{J}(v) \,{\rm d}u\,{\rm d}v=-8t {\rm e}^{-4t^2} \Psi(2{\rm i}t)^2.
	\end{gather*}
	Substituting \eqref{L K rel} into \eqref{Ward var1}, we obtain that Ward's equation is equivalent to
	\begin{gather*}
		\Psi \Psi''(2{\rm i}t) - \Psi'(2{\rm i}t)^2-\dfrac{\Psi(2{\rm i}t)^2}{4t^2}= -\Psi(2{\rm i}t) \frac{L'(t)-M(t)}{2{\rm i}}+\Psi'(2{\rm i}t) L(t).
	\end{gather*}
	Dividing each side of the identity by $\Psi(2{\rm i}t)^2$, we have
	\begin{gather*}
			\frac{{\rm d}}{{\rm d}t}\left[ \frac{1}{2{\rm i}} \frac{\Psi'(2{\rm i}t)}{\Psi(2{\rm i}t)}+\frac{1}{4t} \right]=-\frac{1}{2{\rm i}} \frac{{\rm d}}{{\rm d}t} \left[ \frac{L(t)}{\Psi(2{\rm i}t)} \right] +\frac{1}{2{\rm i}}\frac{M(t)}{\Psi(2{\rm i}t)},
		\end{gather*}
	which completes the proof.
\end{proof}

In the following lemma, we analyse the structure of the function $L$ by rewriting
\begin{gather}\label{L L1L2}
	L=L_1+L_2,
\end{gather}
where
\begin{gather} \label{L1}
	L_1(t):={\rm e}^{-4t^2} \Psi(2{\rm i}t)^2+	\frac{1}{4\sqrt{\pi}} \Psi(2{\rm i}t) \int_{\R} u {\rm e}^{ \frac{u^2}{4} } \erfc\left( \frac{u}{2}-2t \right) \wh{J}(u) \,{\rm d}u
\end{gather}
and
\begin{gather}
		L_2(t) :=-\frac{1}{4\pi\sqrt{\pi}} \int_{ u+v>0 } v {\rm e}^{ \frac{v^2}{4}-2vt } \wh{J}(u) \wh{J}(v) \,{\rm d}u\,{\rm d}v\nonumber\\
\hphantom{L_2(t) :=}{}
-\frac{1}{4\pi\sqrt{\pi}} \int_{ u+v<0 } u {\rm e}^{ \frac{u^2}{4}-2vt } \wh{J}(u) \wh{J}(v) \,{\rm d}u\,{\rm d}v.\label{L2}
	\end{gather}

\begin{Lemma}\label{Lem_Ward var3}
	Ward's equation \eqref{Ward} holds if and only if there exists a constant $c$ such that
	\begin{gather*}
		L_2(t)=-\Psi'(2{\rm i}t)-\left( \frac{{\rm i}}{2t}+c \right)\Psi(2{\rm i}t).
	\end{gather*}
\end{Lemma}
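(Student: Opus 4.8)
The plan is to feed the decomposition $L=L_1+L_2$ of \eqref{L L1L2}--\eqref{L2} into the reformulation of Ward's equation supplied by Lemma~\ref{Lem_Ward var2}, and to observe that $L_1$ has been engineered precisely so that $\frac{{\rm d}}{{\rm d}t}\big[L_1(t)/\Psi(2{\rm i}t)\big]$ reproduces the inhomogeneous term $-8t{\rm e}^{-4t^2}\Psi(2{\rm i}t)$ occurring there. Concretely, I would first compute that derivative directly from \eqref{L1}, writing
\[
\frac{L_1(t)}{\Psi(2{\rm i}t)}={\rm e}^{-4t^2}\Psi(2{\rm i}t)+\frac{1}{4\sqrt{\pi}}\int_{\R}u\,{\rm e}^{u^2/4}\erfc\Big(\tfrac{u}{2}-2t\Big)\wh{J}(u)\,{\rm d}u.
\]
Differentiating the first summand gives $-8t{\rm e}^{-4t^2}\Psi(2{\rm i}t)+2{\rm i}\,{\rm e}^{-4t^2}\Psi'(2{\rm i}t)$. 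For the second summand I would use $\frac{{\rm d}}{{\rm d}t}\erfc(u/2-2t)=\frac{4}{\sqrt{\pi}}{\rm e}^{-(u/2-2t)^2}$ together with the Gaussian identity ${\rm e}^{u^2/4}{\rm e}^{-(u/2-2t)^2}={\rm e}^{2ut-4t^2}$, which reduces it to $\frac{{\rm e}^{-4t^2}}{\pi}\int_{\R}u\,{\rm e}^{2ut}\wh{J}(u)\,{\rm d}u$; then the Fourier representation \eqref{Psi J hat} and the oddness of $\wh{J}$ identify $\int_{\R}u\,{\rm e}^{2ut}\wh{J}(u)\,{\rm d}u=\int_{\R}u\,{\rm e}^{-2ut}\wh{J}(u)\,{\rm d}u=-2\pi{\rm i}\,\Psi'(2{\rm i}t)$, so the second summand contributes $-2{\rm i}\,{\rm e}^{-4t^2}\Psi'(2{\rm i}t)$. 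The two $\Psi'$-terms cancel, leaving $\frac{{\rm d}}{{\rm d}t}\big[L_1(t)/\Psi(2{\rm i}t)\big]=-8t{\rm e}^{-4t^2}\Psi(2{\rm i}t)$.

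Substituting this into Lemma~\ref{Lem_Ward var2} and using $L/\Psi(2{\rm i}t)=L_1/\Psi(2{\rm i}t)+L_2/\Psi(2{\rm i}t)$, the $-8t{\rm e}^{-4t^2}\Psi(2{\rm i}t)$ terms cancel on both sides and Ward's equation collapses to the assertion that $\frac{{\rm d}}{{\rm d}t}\big[L_2(t)/\Psi(2{\rm i}t)+\Psi'(2{\rm i}t)/\Psi(2{\rm i}t)+{\rm i}/(2t)\big]=0$, i.e.\ that this bracket is locally constant on $\R\setminus\{0\}$. Writing its value as $-c$ and rearranging yields exactly $L_2(t)=-\Psi'(2{\rm i}t)-\big({\rm i}/(2t)+c\big)\Psi(2{\rm i}t)$. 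Since every step in this chain is an equivalence, this gives the stated characterisation.

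The one point requiring care — and essentially the only obstacle — is that $\R\setminus\{0\}$ is disconnected, so a priori the argument only produces constants $c_+$ on $(0,\infty)$ and $c_-$ on $(-\infty,0)$. Here one invokes that $\Psi$ is odd (from $\wh J$ odd): then $\Psi'(2{\rm i}t)$ is even and $\Psi(2{\rm i}t)$ is odd in $t$, so the apparent pole of $\Psi'(2{\rm i}t)+\tfrac{{\rm i}}{2t}\Psi(2{\rm i}t)$ at $t=0$ is removable and this function is real-analytic across $0$; the same is true of $L_2$, which is an absolutely convergent parameter integral in $t$. Hence the relation $L_2(t)+\Psi'(2{\rm i}t)+\tfrac{{\rm i}}{2t}\Psi(2{\rm i}t)=-c_\pm\Psi(2{\rm i}t)$ extends analytically through $t=0$, forcing $c_+=c_-=:c$, and completes the proof.
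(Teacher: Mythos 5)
Your proof is correct and follows the paper's argument essentially verbatim: you compute $\frac{{\rm d}}{{\rm d}t}\bigl[L_1(t)/\Psi(2{\rm i}t)\bigr]=-8t{\rm e}^{-4t^2}\Psi(2{\rm i}t)$ via the same cancellation of the two $\Psi'$-terms, cancel this against the inhomogeneous term in Lemma~\ref{Lem_Ward var2}, and integrate. Your closing remark on why the constant of integration is the same on both components of $\R\setminus\{0\}$ (via oddness of $\Psi$ and removability of the singularity at $t=0$) is a small point the paper leaves implicit, and it is handled correctly.
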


\begin{proof}
	By Lemma~\ref{Lem_Ward var2} and \eqref{L L1L2}, we have shown that Ward's equation \eqref{Ward} is equivalent to
	\begin{gather*}
		\frac{{\rm d}}{{\rm d}t} \left[ \frac{L_1(t)+L_2(t)}{\Psi(2{\rm i}t)} \right] = 	-\frac{{\rm d}}{{\rm d}t}\left[ \frac{\Psi'(2{\rm i}t)}{\Psi(2{\rm i}t)}+\frac{{\rm i}}{2t} \right]-8t {\rm e}^{-4t^2} \Psi(2{\rm i}t).
	\end{gather*}
	Notice that since
	\begin{gather*}
		-2\pi {\rm i} \Psi'(2{\rm i}t)= \int_{\R} u{\rm e}^{-2ut} \wh{J}(u)\,{\rm d}u,
	\end{gather*}
	we have
	\begin{gather*}
	\frac{1}{4\sqrt{\pi}} \int_{\R} \frac{{\rm d}}{{\rm d}t}\left[ u {\rm e}^{ \frac{u^2}{4} } \erfc\left( \frac{u}{2}-2t \right) \right] \wh{J}(u) \,{\rm d}u
	= \frac{{\rm e}^{-4t^2}}{\pi} \int_{\R} u {\rm e}^{2ut} \wh{J}(u) \,{\rm d}u=-2{\rm i}{\rm e}^{-4t^2}\Psi'(2{\rm i}t).
	\end{gather*}
	Therefore by \eqref{L1}, we obtain
	\begin{gather*}
		\frac{{\rm d}}{{\rm d}t} \left[\frac{L_1(t)}{\Psi(2{\rm i}t)}\right]=-8t {\rm e}^{-4t^2} \Psi(2{\rm i}t).
	\end{gather*}
	Thus Ward's equation is equivalent to
	\begin{gather*}
		\frac{{\rm d}}{{\rm d}t} \left[ \frac{L_2(t)}{\Psi(2{\rm i}t)} \right] = 	-\frac{{\rm d}}{{\rm d}t}\left[ \frac{\Psi'(2{\rm i}t)}{\Psi(2{\rm i}t)}+\frac{{\rm i}}{2t} \right].\tag*{\qed}
	\end{gather*}\renewcommand{\qed}{}
\end{proof}

We are now ready to prove Theorem~\ref{Thm_TIsol}. It results from this part of the proof that $E=(-a,a)$ for some $a>0$.

\begin{proof}[Proof of Theorem~\ref{Thm_TIsol}]
	By Proposition~\ref{Prop_mass1 TIsol}, we have
	\begin{gather*}
		\wh{J}(u)=\frac{2\sqrt{\pi}}{{\rm i}} \frac{{\rm e}^{-u^2/4}}{u} \mathbbm{1}_{E}(u/2)
	\end{gather*}
	for some symmetric Borel set $E$. Suppose that $E=(-\alpha/2,\alpha/2)$ for some $\alpha>0.$ We shall compute $L_2$ in \eqref{L2}.
	Then we have
	\begin{gather*}
 -\frac{\sqrt{\pi}}{4\pi^2} \int_{ u+v>0 } v {\rm e}^{ \frac{v^2}{4}-2vt } \wh{J}(u) \wh{J}(v) \,{\rm d}u\,{\rm d}v
		\\
\qquad{} = - \frac{1}{2\pi {\rm i}} \int_\R \int_{ v>-u } {\rm e}^{-2vt } \mathbbm{1}_{E}(v/2) \,{\rm d}v \, \mathbbm{1}_{E}(u/2)\, \wh{J}(u)\,{\rm d}u
		\\
\qquad{} = - \frac{1}{2\pi {\rm i}} \int_{-\alpha}^{\alpha} \frac{{\rm e}^{2ut}-{\rm e}^{-2\alpha t} }{ 2t } \wh{J}(u)\,{\rm d}u=\frac{1}{2t {\rm i}}\frac{1}{2\pi } \int_{-\alpha}^{\alpha} {\rm e}^{-2ut} \wh{J}(u)\,{\rm d}u=-\frac{{\rm i}}{2t} \Psi(2{\rm i}t).
	\end{gather*}
	On the other hand,
	\begin{gather*}
 -\frac{\sqrt{\pi}}{4\pi^2} \int_{ u+v<0 } u {\rm e}^{ \frac{u^2}{4}-2vt } \wh{J}(u) \wh{J}(v) \,{\rm d}u\,{\rm d}v\\
 \qquad{} =-\frac{1}{2\pi {\rm i}}\int_\R \int_{u<-v} \mathbbm{1}_{E}(u/2)\,{\rm d}u \, {\rm e}^{-2vt}\, \mathbbm{1}_{E}(v/2) \wh{J}(v) \,{\rm d}v
		\\
\qquad {}=-\frac{1}{2\pi {\rm i}} \int_{-\alpha}^\alpha (\alpha-v) {\rm e}^{-2vt} \wh{J}(v) \,{\rm d}v=- \Psi'(2{\rm i}t)-\frac{\alpha}{{\rm i}} \Psi(2{\rm i}t).
	\end{gather*}
	Combining the above equations, we obtain that
	\begin{gather*}
		L_2(t)=-\Psi'(2{\rm i}t)-\left( \frac{{\rm i}}{2t}+\frac{\alpha}{{\rm i}} \right)\Psi(2{\rm i}t).
	\end{gather*}
	Then by Lemma~\ref{Lem_Ward var3}, we conclude that Ward's equation holds.
	
It is also easy to see that if $E$ is not connected, Ward's equation does not hold due to the discontinuity of the functions $\int_{ v>-u } {\rm e}^{-2vt } \mathbbm{1}_{E}(v/2) \,{\rm d}v$ and $\int_{u<-v} \mathbbm{1}_{E}(u/2)\,{\rm d}u$. For instance let $E=(-\alpha/2,-\beta/2) \cup (\beta/2,\alpha/2)$ for some $\alpha>\beta>0$.
	Note that
	\begin{gather*}
	\int_{ v>-u } {\rm e}^{-2vt } \mathbbm{1}_{E}(v/2) \,{\rm d}v=
	\begin{cases}
		\displaystyle \frac{{\rm e}^{2ut}-{\rm e}^{-2\alpha t}}{2t} &\text{if }u \in (-\alpha,-\beta),
		\vspace{1mm}\\
		\displaystyle \frac{{\rm e}^{2ut}-{\rm e}^{-2\alpha t}+{\rm e}^{-2\beta t}-{\rm e}^{2\beta t}}{2t} &\text{if }u \in (\beta,\alpha).
	\end{cases}
	\end{gather*}
	Thus we have
	\begin{gather*}
	-\frac{\sqrt{\pi}}{4\pi^2} \int_{ u+v>0 } v {\rm e}^{ \frac{v^2}{4}-2vt } \wh{J}(u) \wh{J}(v) \,{\rm d}u\,{\rm d}v= -\frac{{\rm i}}{2t} \Psi(2{\rm i}t)+ \frac{1}{\pi {\rm i}} \frac{\sinh(2\beta t)}{2t} \int_{\beta}^\alpha \wh{J}(u)\,{\rm d}u.
	\end{gather*}
	Also since
	\begin{gather*}
		\int_{u<-v} \mathbbm{1}_{E}(u/2)\,{\rm d}u=
	\begin{cases}
		\alpha-v &\text{if }v \in (\beta,\alpha),
		\\
		\alpha-v-2\beta 	&\text{if }v \in (-\alpha,-\beta),
	\end{cases}
	\end{gather*}
	we have
	\begin{gather*}
 -\frac{\sqrt{\pi}}{4\pi^2} \int_{ u+v<0 } u {\rm e}^{ \frac{u^2}{4}-2vt } \wh{J}(u) \wh{J}(v) \,{\rm d}u\,{\rm d}v
		\\
\qquad{} =-\frac{1}{2\pi {\rm i}} \int_{-\alpha}^{-\beta} (\alpha-v) {\rm e}^{-2vt} \wh{J}(v) \,{\rm d}v-\frac{1}{2\pi {\rm i}} \int_{\beta}^{\alpha} (\alpha-v-2\beta) {\rm e}^{-2vt} \wh{J}(v) \,{\rm d}v
		\\
\qquad{} = -\Psi'(2{\rm i}t)-\frac{\alpha}{{\rm i}} \Psi(2{\rm i}t)+ \frac{\beta}{\pi {\rm i}} \int_{\beta}^{\alpha} {\rm e}^{-2vt} \wh{J}(v) \,{\rm d}v.
	\end{gather*}
	Combining above equations, we obtain
	\begin{gather*}
		L_2(t) =-\Psi'(2{\rm i}t)-\left( \frac{{\rm i}}{2t}+\frac{\alpha}{{\rm i}} \right)\Psi(2{\rm i}t)
		+ \frac{1}{\pi {\rm i}} \frac{\sinh(2\beta t)}{2t} \int_{\beta}^\alpha \wh{J}(u)\,{\rm d}u+ \frac{\beta}{\pi {\rm i}} \int_{\beta}^{\alpha} {\rm e}^{-2vt} \wh{J}(v) \,{\rm d}v.
	\end{gather*}
	Thus by Lemma~\ref{Lem_Ward var3}, one can see that Ward's equation does not hold.
	
	For a general $E$ it follows from similar computations that
	\begin{gather*}
 -\frac{\sqrt{\pi}}{4\pi^2} \int_{ u+v>0 } v {\rm e}^{ \frac{v^2}{4}-2vt } \wh{J}(u) \wh{J}(v) \,{\rm d}u\,{\rm d}v+\frac{{\rm i}}{2t} \Psi(2{\rm i}t)
	\\
\qquad{} = - \frac{1}{2\pi {\rm i}} \int_\R \left( \int_{ v>-u } {\rm e}^{-2(u+v)t } \mathbbm{1}_{E}(v/2) \,{\rm d}v + \frac{1 }{ 2t } \right) {\rm e}^{2ut} \mathbbm{1}_{E}(u/2) \wh{J}(u)\,{\rm d}u
	\end{gather*}
	and
		\begin{gather*}
 -\frac{\sqrt{\pi}}{4\pi^2} \int_{ u+v<0 } u {\rm e}^{ \frac{u^2}{4}-2vt } \wh{J}(u) \wh{J}(v) \,{\rm d}u\,{\rm d}v
	+ \Psi'(2{\rm i}t)
		\\
	\qquad{} =-\frac{1}{2\pi {\rm i}}\int_\R \left( \int_{u<-v} \mathbbm{1}_{E}(u/2)\,{\rm d}u + v \right) {\rm e}^{-2vt} \mathbbm{1}_{E}(v/2) \wh{J}(v) \,{\rm d}v.
	\end{gather*}
	Therefore by Lemma~\ref{Lem_Ward var3}, Ward's equation holds if and only if
	\begin{gather*}
	 \int_{ v>-u } {\rm e}^{-2(u+v)t } \mathbbm{1}_{E}(v/2) \,{\rm d}v + \frac{1 }{ 2t }=c_1, \qquad u \textrm{ a.e.},
	 \\
\int_{u<-v} \mathbbm{1}_{E}(u/2)\,{\rm d}u + v =c_2, \qquad v\textrm{ a.e.}
	\end{gather*}
	for some constants $c_1$, $c_2$. This gives that $E$ is connected (up to a null set). Now the proof is complete.
\end{proof}

\subsection*{Acknowledgements}
It is our pleasure to thank Boris Khoruzhenko for discussions (G.A.) and both Boris Khoruzhenko and Serhii Lysychkin for sharing with us their results \cite{BL,BL2,Lysychkin} prior to publication.

The authors are grateful to the DFG-NRF International Research Training Group IRTG 2235 supporting the Bielefeld-Seoul graduate exchange programme.
Furthermore, Gernot Akemann was partially supported
by the DFG through the grant CRC 1283 ``Taming uncertainty and profiting from randomness and low regularity in analysis, stochastics and their applications''.
Sung-Soo Byun and Nam-Gyu Kang were partially supported by Samsung Science and Technology Foundation (SSTF-BA1401-51) and by the National Research Foundation of Korea (NRF-2019R1A5A1028324).
Nam-Gyu Kang was partially supported by a KIAS Individual Grant (MG058103) at Korea Institute for Advanced Study.

\pdfbookmark[1]{References}{ref}
\LastPageEnding

\end{document}